\definecolor{darkred}{RGB}{128,0,0}
\definecolor{darkgreen}{RGB}{0,128,0}
\definecolor{darkblue}{RGB}{0,0,128}
\newtheoremstyle{exercise}							
{15pt}											
{15pt}											
{\rm}												
{}												
{\bfseries}											
{}												
{10pt}											
{}												
\numberwithin{equation}{section}
\newtheorem{proposition}[equation]{Proposition}
\newtheorem{theorem}[equation]{Theorem}
\newtheorem{corollary}[equation]{Corollary}
\newtheorem{lemma}[equation]{Lemma}
\theoremstyle{exercise}
\newtheorem{remark}[equation]{Remark}
\newtheorem{example}[equation]{Example}
\newtheorem{notation}[equation]{Notation}
\newcommand{\C}{\mathbb{C}} 						
\newcommand{\Q}{\mathbb{Q}} 						
\newcommand{\Z}{\mathbb{Z}} 						
\newcommand{\N}{\mathbb{N}} 						
\newcommand{\Ss}{\mathbb{S}} 						
\newcommand{\Gm}{\mathbb{G}_m}  					
\newcommand{\spec}{\mathrm{Spec}} 					
\DeclareMathOperator{\cofib}{cofib} 					
\DeclareMathOperator{\ra}{\rightarrow} 					
\DeclareMathOperator{\lra}{\longrightarrow}				
\newcommand{\xra}[1]{\xrightarrow{#1}} 					
\newcommand{\xla}[1]{\xleftarrow{#1}} 					
\DeclareMathOperator{\hra}{\hookrightarrow}				
\newcommand{\thra}{\twoheadrightarrow}	
\newcommand{\xhra}[1]{\overset{#1}{\hookrightarrow}}		
\DeclareMathOperator{\im}{im}							
\DeclareMathOperator{\vcd}{vcd}					
\DeclareMathOperator{\cd}{cd}					
\DeclareMathOperator{\Hom}{\mathrm{Hom}}				
\DeclareMathOperator{\SH}{\mathcal{SH}} 				
\DeclareMathOperator{\Smk}{\mathsf{Sm_k}}				
\newcommand{\unit}{\mathbf{1}} 						
\DeclareMathOperator{\kw}{kw} 							
\DeclareMathSymbol{A}{\mathalpha}{operators}{`A}
\DeclareMathSymbol{B}{\mathalpha}{operators}{`B}
\DeclareMathSymbol{C}{\mathalpha}{operators}{`C}
\DeclareMathSymbol{D}{\mathalpha}{operators}{`D}
\DeclareMathSymbol{E}{\mathalpha}{operators}{`E}
\DeclareMathSymbol{F}{\mathalpha}{operators}{`F}
\DeclareMathSymbol{G}{\mathalpha}{operators}{`G}
\DeclareMathSymbol{H}{\mathalpha}{operators}{`H}
\DeclareMathSymbol{I}{\mathalpha}{operators}{`I}
\DeclareMathSymbol{J}{\mathalpha}{operators}{`J}
\DeclareMathSymbol{K}{\mathalpha}{operators}{`K}
\DeclareMathSymbol{L}{\mathalpha}{operators}{`L}
\DeclareMathSymbol{M}{\mathalpha}{operators}{`M}
\DeclareMathSymbol{N}{\mathalpha}{operators}{`N}
\DeclareMathSymbol{O}{\mathalpha}{operators}{`O}
\DeclareMathSymbol{P}{\mathalpha}{operators}{`P}
\DeclareMathSymbol{Q}{\mathalpha}{operators}{`Q}
\DeclareMathSymbol{R}{\mathalpha}{operators}{`R}
\DeclareMathSymbol{S}{\mathalpha}{operators}{`S}
\DeclareMathSymbol{T}{\mathalpha}{operators}{`T}
\DeclareMathSymbol{U}{\mathalpha}{operators}{`U}
\DeclareMathSymbol{V}{\mathalpha}{operators}{`V}
\DeclareMathSymbol{W}{\mathalpha}{operators}{`W}
\DeclareMathSymbol{X}{\mathalpha}{operators}{`X}
\DeclareMathSymbol{Y}{\mathalpha}{operators}{`Y}
\DeclareMathSymbol{Z}{\mathalpha}{operators}{`Z}
\DeclareMathSymbol{q}{\mathalpha}{operators}{`q}
\begin{document}
	\title{The Dual Motivic Witt Cohomology Steenrod Algebra}
	\date{\today}
	\author{Viktor Burghardt}	
	\begin{abstract}
		In this paper we begin the study of the (dual) Steenrod algebra of the motivic Witt cohomology spectrum $H_W\Z$ by determining the algebra structure of ${H_W\Z}_{**}H_W\Z$ over fields $k$ of characteristic not $2$ which are extensions of fields $F$ with $K^M_2(F)/2=0$. For example, this includes all fields of odd characteristic, as well as fields that are extensions of quadratically closed fields of characteristic $0$. After inverting $\eta$, this computes the $HW:=H_W\Z[\eta^{-1}]$-algebra ${HW}_{**}HW$. In particular, for the given base fields, this implies the $HW$-module structure of $HW\wedge HW$ which has been computed by Bachmann and Hopkins in \cite{bachmann-etaperiodic-fields}\cite{bachmann-etaperiodic-dedekind}.
	\end{abstract}
	\maketitle
	\tableofcontents
	\section{Introduction}
	In classical homotopy theory, the determination of the structure of the Steenrod algebra $\mathcal{A}_p$ was key to the development of the field. It lead, among other results, to Adams' resolution of the Hopf invariant one problem \cite{hopfinvariantone}, Thom's classification  of smooth manifolds up to cobordism \cite{Thom1954}, accessibility of the Adams and Adams-Novikov spectral sequences \cite{Adams1957/58} \cite{NovikovSS} and the many computations that come with it. 
		
	In motivic homotopy theory, Voevodsky computed the Steenrod algebra, and its dual, of motivic cohomology mod $p$ \cite{Voevodsky_reduced_power_operations_in_motivic_cohomology}. For $p=2$ the dual motivic Steenrod algebra is given by
	$$H\Z/2_{**}H\Z/2\cong H\Z/2_{**}[\tau_0,\tau_1,\ldots,\xi_1,\xi_2,\ldots]/(\tau_i^2-\rho\tau_{i+1}-(\tau+\rho\tau_0)\xi_{i+1}).$$ 
	It was central to the establishment of the Milnor, and more generally Bloch-Kato, conjectures, now also known as the norm-residue isomorphism. The resolution of the Milnor conjecture \cite{Voevodsky-mot_coh_with_2-coef} and the Beilinson-Lichtenbaum conjecture \cite[Section 6]{suslinvoevodskyblochkato} implies, see for example \cite[7.2]{kylling_hermktheory}, that the coefficient ring of $H\Z/2$ is given by a polynomial ring over Milnor K-theory mod $2$ in one variable $\tau$ with degree $|\tau|=(0,-1)$:  
	$$H\Z/2_{**}\cong \underline{K}^M_{**}/2[\tau].$$ 
	Let $\SH(k)$ be the motivic stable homotopy category of Morel and Voevodsky over a perfect field $k$ as described in \cite{morel_intro_to_A1}. Given a spectrum $E\in \SH(k)$, we denote by $\pi_i(E)_*:=\pi_{i-*,-*}E$ the \emph{i-line} of the stable homotopy groups of $E$. The $0$-line of the stable homotopy groups of the motivic sphere spectrum is given by Milnor-Witt K-theory \cite{morela1algtopoverfield} \cite{morel_motivic_pi0}  
	$$\pi_0(\Ss)_*\cong K_*^{MW}(k).$$ 
	As a consequence of the Milnor conjecture, it decomposes via the pullback square \cite[Thm 5.3]{morel_sur_les_puissances_de_I}\cite[Chapter 5]{morela1algtopoverfield}
	\begin{equation}\label{ktheorypullback}
		\begin{tikzcd}
			K_*^{MW}(k) \ar[r] \ar[d]& K_*^M(k) \ar[d]\\
			K_*^W(k) \ar[r] & k_*^M(k),
		\end{tikzcd}
	\end{equation}
	where $K_*^M$ denotes the graded ring of Milnor K-theory, $K_*^W$ denotes the graded ring of Witt K-theory and $k_*^M$ denotes the graded ring of Milnor K-theory modulo $2$. In degree $(0,0)$ this is the pullback square
	\[\begin{tikzcd}
		GW(k) \ar[r] \ar[d]& \Z \ar[d]\\
		W(k) \ar[r] & \Z/2.
	\end{tikzcd}\]
	It relates the Grothendieck-Witt ring $GW(k)$ to the Witt ring of symmetric bilinear forms $W(k)$. These are theories very arithmetic in nature. 
	Let us consider this at the level of motivic Eilenberg-Maclane spectra. The square (\ref{ktheorypullback}) can be realized as homotopy groups of the pullback square of the corresponding homotopy modules in the heart of $\SH(k)$ with respect to Morel's homotopy $t$-structure \cite{morel_intro_to_A1}. Taking effective covers one arrives at a homotopy pullback square 
	\begin{equation}\label{cohomologypullback}
		\begin{tikzcd}
			H\tilde{\Z} \ar[r] \ar[d] & H\Z \ar[d]\\
			H_W\Z \ar[r] & H\Z/2.
		\end{tikzcd}
	\end{equation} 
	Motivic cohomology, represented by the motivic spectrum $H\Z\in\SH(k)$, is in various ways similar to singular cohomology. The spectrum $H\Z/2$ is the mod $2$ version of $H\Z$. The other two spectra, motivic Milnor-Witt cohomology $H\tilde{\Z}$ and motivic Witt cohomology $H_W\Z$, are less classical and arise from new phenomena in $\SH(k)$. People have intensively studied $\eta$-inverted variants of $H_W\Z$, $HW:=H_W\Z[\eta^{-1}]$, the structure of $HW\wedge HW$, as well well as structures of $\eta$-inverted relatives like hermitian K-theory \cite{bachmann_generalized-slices}\cite{roendigs_cellularity_of_hermktheory}\cite{ananyevskiy_stableopinderwitt}\cite{ananyevskiy_very_effective_herktheory}\cite{kylling_hermktheory} \cite{bachmann-etaperiodic-fields}\cite{bachmann-etaperiodic-dedekind}\cite{fasel2020stable}.
		
	Our goal is to study the spectra on the left hand side of the square (\ref{cohomologypullback}), specifically the algebras of self-cohomology (Steenrod algebra) and their self-homology (dual Steenrod algebra). We hope this will shed some light on the Witt part of $\SH(k)$. In this paper we start by determining the algebra structure of ${H_W\Z}_{**}H_W\Z$ for certain base fields. We plan this to be the first in a series of studies to determine the entire structure.
	
	The coefficient ring ${H_W\Z}_{**}$ is known. Combining our knowledge of the coefficient ring $H\Z/2_{**}$ with the resolution of Morel's structure conjecture \cite[Thm. 17]{bachmann_generalized-slices}, we get
	$${H_W\Z}_{**}\cong \underline{K}_{**}^W[\tau]/(\tau \eta).$$
	The element $\eta$ is already present in $\pi_{1,1}\Ss$ and originates from the unstable Hopf map $\mathbb{A}^2\setminus {0}\ra \mathbb{P}^1$.
		
	We now describe the main theorem of this paper. Let $I=(I_2,I_3,\ldots)$ be a sequence with $I_i\in \{0,1\}$ and almost all $I_i=0$ and denote the zero sequence by $\emptyset$. Then we define
	\begin{gather*}
		c(I):=\sum_{i\in I}\overline{\xi_{i-1}}^2\overline{\xi}(I\setminus i), \hspace{0.7cm} c_1(I):=\tau_0\overline{\xi}(I)+\tau_1c(I),\\
		t(I):=\tau \overline{\xi}(I),\hspace{1cm} t_1(I):=\tau \tau_1\overline{\xi}(I),
	\end{gather*}
	where a line above an element stands for a conjugated element via the conjugation $\iota:H\Z/2\wedge H\Z/2\xra{\simeq} H\Z/2\wedge H\Z/2$. There exist canonical lifts of these elements under the canonical map ${H_W\Z}_{**}H_W\Z\ra H\Z/2_{**}H\Z/2$. We will abuse notation and denote the lifted elements also as defined above. Furthermore, there exist lifts $t_j\in {H_W\Z}_{**}H_W\Z$ of $\tau_j\in H\Z/2_{**}H\Z/2$ for $j\geq 2$ and $s\in {H_W\Z}_{**}H_W\Z$ of $\tau_0^3\tau_1\in H\Z/2_{**}H\Z/2$. With this notation, the main theorem takes the following form.
	\begin{theorem}[see Thm. \ref{mainthm}]\label{intro_thm}
		Let the base field $k$ be an extension of a field $F$ satisfying $k^M_2(F)=0$. Then we have an isomorphism of right ${H_W\Z}_{**}$-algebras
		$${H_W\Z}_{**} {H_W\Z} \cong {H_W\Z}_{**}[s,t_2,t_3,\ldots,c(I),t(I),c_1(I),t_1(I)]/(\ldots),$$
		where $I$ is a binary index set starting at index $2$.  The ideal of $\eta$-torsion is generated by $c(I),c_1(I),t(I),t_1(I)$. The ideal $(\ldots)$ is generated by the relations in Lemma \ref{c_relations}, Lemma \ref{t_relations} and
		\begin{gather*}
		c(\emptyset)=1,\hspace{1cm} \tau_0^4=c(e_{2})\tau^2,\hspace{1cm}t_j^2=\rho t_{j+1}+\xi_{j+1}\tau,\hspace{1cm}s^2=\tau_0^2(\rho t_2+\xi_2\tau)c(e_2)\tau^2,\\
		sc_1(I)=\tau_0^3(\rho t_2+\xi_2\tau)c(I)+\tau t_1(I)c(e_2),\hspace{1cm}t(I)s=t_1(I)\tau_0^3,\hspace{1cm} t_1(I)s=\tau_0^3t(I)(\rho t_2+\xi_2\tau).
		\end{gather*}
		The degrees here are
		\begin{gather*}
		|t_1|=(3,0), \hspace{1cm} |s|=(5,0),\hspace{1cm}|t_i|=(2^{i+1}-1,2^i-1),\\
		|c(I)|=\sum_{i\in I} (2^{j+2}-4,2^{j+1}-2), \hspace{1cm} |c_1(I)|=\sum_{i\in I} (2^{j+2}-1,2^{j+1}-1),\\
		|t(I)|=(0,-1)+\sum_{i\in I} (2^{i+1}-2,2^{i}-1), \hspace{1cm} |t_1(I)|=(3,0)+\sum_{i\in I} (2^{i+1}-2,2^{i}-1).
		\end{gather*}
	\end{theorem}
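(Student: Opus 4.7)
The plan is to reduce the computation to the known mod-$2$ dual Steenrod algebra $H\Z/2_{**}H\Z/2$ via the homotopy pullback square (\ref{cohomologypullback}). That square gives a cofiber sequence
\[
H\tilde{\Z} \to H\Z \vee H_W\Z \to H\Z/2,
\]
and smashing with $H_W\Z$ on the right produces a long exact sequence linking ${H_W\Z}_{**}H_W\Z$ to the simpler groups ${H\Z}_{**}H_W\Z$, ${H\tilde{\Z}}_{**}H_W\Z$, and ${H\Z/2}_{**}H_W\Z$. Those auxiliary groups are accessed by smashing (\ref{cohomologypullback}) with $H\Z/2$, $H\Z$, and $H\tilde{\Z}$ and reading off the result from Voevodsky's computation of $H\Z/2_{**}H\Z/2$. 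The arithmetic hypothesis that $k$ is an extension of $F$ with $k^M_2(F)=0$ enters here: it kills the mod-$2$ Milnor K-theory in degree $2$ of $F$ (and hence, via base change, much of it over $k$) in a way that collapses the differentials of these long exact sequences enough for the computation to be carried out explicitly.

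Next I would produce the generators as explicit lifts. Each $\tau_j$ with $j\geq 2$ admits a canonical lift $t_j$ along the reduction map ${H_W\Z}_{**}H_W\Z \to H\Z/2_{**}H\Z/2$; the individual $\tau_0,\tau_1$ do not lift because of $\tau\eta=0$ in ${H_W\Z}_{**}$, but the composite $\tau_0^3\tau_1$ lifts to $s$. The polynomial combinations $c(I),c_1(I),t(I),t_1(I)$ are engineered precisely so that their defining expressions in $\tau,\tau_i,\xi_i$ assemble into $\eta$-torsion classes whose lifts do exist. These elements yield a map
\[
\Phi \colon {H_W\Z}_{**}[s,t_2,t_3,\ldots,c(I),t(I),c_1(I),t_1(I)]/(\ldots) \longrightarrow {H_W\Z}_{**}H_W\Z,
\]
provided one verifies that every listed relation holds in the target.

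Checking the relations splits into pieces. The internal relations among the $c$- and $t$-elements are exactly the content of Lemmas \ref{c_relations} and \ref{t_relations}, which I may invoke. The remaining relations, namely $c(\emptyset)=1$, $\tau_0^4=c(e_2)\tau^2$, $t_j^2=\rho t_{j+1}+\xi_{j+1}\tau$, $s^2=\tau_0^2(\rho t_2+\xi_2\tau)c(e_2)\tau^2$, and the bilinear identities $sc_1(I)$, $t(I)s$, $t_1(I)s$, are each obtained by computing both sides symbolically in $H\Z/2_{**}H\Z/2$ from Voevodsky's relation $\tau_i^2 - \rho\tau_{i+1} - (\tau+\rho\tau_0)\xi_{i+1}$, then multiplying through by enough powers of $\tau$ or $\tau_0$ so that both sides acquire a canonical ${H_W\Z}$-lift, and finally checking the lifts agree. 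The conjugation $\iota$ converts between $\xi$- and $\overline\xi$-forms of several of these identities.

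The main obstacle is showing $\Phi$ is an isomorphism. Surjectivity is the easier half: modulo the $\eta$-torsion ideal, $s$ and the $t_j$ map onto a generating set of $HW_{**}HW$ as given by Bachmann--Hopkins \cite{bachmann-etaperiodic-fields}\cite{bachmann-etaperiodic-dedekind}, while on the $\eta$-torsion the elements $c(I),c_1(I),t(I),t_1(I)$ are manifestly designed to hit all the $\eta$-torsion classes detected by the mod-$2$ reduction. Injectivity — showing that no hidden relations appear — is where I expect the bulk of the work. One must match the proposed presentation, graded piece by graded piece, against the ${H_W\Z}_{**}$-module determined by the long exact sequence above, tracking carefully the interaction of $\rho$-divisibility, $\tau$-divisibility, and $\eta$-torsion in the combinatorial indexing by $I$. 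The $k^M_2(F)=0$ hypothesis is exactly what controls this bookkeeping, ensuring that the only residual contributions come from the generators listed.
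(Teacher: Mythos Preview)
Your proposal has a genuine structural gap: you are working with the wrong pullback square. The square (\ref{cohomologypullback}) involving $H\tilde{\Z}$ and $H\Z$ is \emph{not} the one the paper uses, and for good reason. Smashing it with $H_W\Z$ would require you to know $H\tilde{\Z}_{**}H_W\Z$ and $H\Z_{**}H_W\Z$, and neither of these is ``read off from Voevodsky's computation.'' In fact $H\tilde{\Z}_{**}(-)$ is at least as intricate as $H_W\Z_{**}(-)$, since $H\tilde{\Z}$ sits \emph{above} $H_W\Z$ in the square; there is no reduction in complexity here, and the argument becomes circular.

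The paper instead uses the pullback square of Proposition~\ref{fundamental-pullback-square}, whose lower row consists of the homotopy modules $\underline{K}^W$ and $\underline{k}^M$. These are tractable: $\underline{k}^M\simeq H\Z/2/\tau$, so its homology is literally a quotient of the dual Steenrod algebra, and $\underline{K}^W$ is linked to $\underline{k}^M$ by the cofiber sequence $\Sigma^{1,1}\underline{K}^W\xra{\eta}\underline{K}^W\to\underline{k}^M$. The decisive technical step, absent from your outline, is Proposition~\ref{no_higher_eta-torsion}: one must show that $\underline{K}^W_{**}H_W\Z$ has no higher $\eta$-torsion. This is where the arithmetic hypothesis $k^M_2(F)=0$ actually bites (via $\rho^3=0$, $k^M_4=0$, and control of $\eta$-completion through \cite{bachmann-etaperiodic-fields}), and it is a degree-by-degree argument against the homology $H(d_*^L)$ of an explicit differential on $\underline{k}^M_{**}H_W\Z$ computed in Lemma~\ref{image_kernel_of_d}. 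Your proposal does not identify this differential or this torsion analysis, and the vague claim that the hypothesis ``collapses the differentials of these long exact sequences'' does not substitute for it. Finally, invoking Bachmann--Hopkins for surjectivity is logically awkward here, since their $HW$-module result is recovered as Corollary~\ref{eta_inverted_algebra}; the paper establishes generation directly via the pullback square (\ref{final_pullback_square}) and Nakayama over the $\eta$-complete ring $\underline{K}^W_{**}$.
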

	
	The theorem shows that ${H_W\Z}_{**}H_W\Z$ has $\eta$-torsion. Thus, $H_W\Z_{**} H_W\Z$ is not free as an $H_W\Z$-module. It will therefore not be the dual of $H_W\Z^{**}H_W\Z$ in the category of $H_W\Z^{**}$-modules. The existence of $\eta$-torsion makes the computation a lot more difficult. One could compare the phenomena in this case to similar phenomena arising in the classical computation of $bo_{**}bo$ \cite{mahowaldboresolutions}. Along the way we show that there is no higher $\eta$-torsion. A fact we believe will further become useful in the future analysis of $H_W\Z^{**}H_W\Z$.
	
	After inverting $\eta$, Theorem \ref{intro_thm} computes the $HW:=H_W\Z[\eta^{-1}]$-algebra $HW_{**}HW$. 
	\begin{corollary}[see Cor. \ref{eta_inverted_algebra}]
		Let the base field $k$ be an extension of a field $F$ satisfying $k^M_2(F)=0$. Then we have an isomorphism of $W(k)[\eta^{-1}]$-algebras
		$${HW}_{**}HW\cong W(k)[\eta^{\pm 1}][x_2,y,x_3,x_4,\ldots]/(y^2,x_j^2-2x_{j+1}),$$
		where $|x_j|=(2^j,0)$ and $|y|=(5,0)$.
	\end{corollary}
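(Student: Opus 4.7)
The plan is to localize the presentation of ${H_W\Z}_{**}{H_W\Z}$ given by Theorem \ref{intro_thm} at $\eta$ and read off the survivors. Since $\eta$-inversion is exact and commutes with the smash product, ${HW}_{**}{HW} \cong ({H_W\Z}_{**}{H_W\Z})[\eta^{-1}]$. First I would identify the coefficient ring: from ${H_W\Z}_{**}\cong \underline{K}^W_{**}[\tau]/(\tau\eta)$, inverting $\eta$ forces $\tau=0$, and by the standard $\eta$-periodicity of Witt K-theory one has $\underline{K}^W_{**}[\eta^{-1}]\cong W(k)[\eta^{\pm 1}]$, giving ${HW}_{**}\cong W(k)[\eta^{\pm 1}]$.

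Next I would identify the surviving generators. The theorem states that the $\eta$-torsion ideal is generated by $c(I)$ (for $I\neq\emptyset$), $c_1(I)$, $t(I)$, and $t_1(I)$, all of which die after inverting $\eta$. The surviving algebra generators over ${HW}_{**}$ are therefore $s$ together with the $t_j$ for $j\geq 2$. To match the stated bidegrees I would set $y := s$, which is already in bidegree $(5,0)$, and $x_j := \eta^{-(2^j-1)}t_j$, whose bidegree computes as $(2^{j+1}-1,2^j-1)-(2^j-1,2^j-1)=(2^j,0)$.

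The final step is to translate the remaining relations. Every relation in the statement of Theorem \ref{intro_thm} and in Lemmas \ref{c_relations}, \ref{t_relations} either involves a torsion generator or contains a factor of $\tau$, and therefore trivializes, with two exceptions. The relation $s^2 = \tau_0^2(\rho t_2+\xi_2\tau)c(e_2)\tau^2$ contains a $\tau^2$ factor and collapses to $y^2=0$. The relation $t_j^2=\rho t_{j+1}+\xi_{j+1}\tau$ becomes $t_j^2=\rho t_{j+1}$; multiplying through by $\eta^{-(2^{j+1}-2)}$ and using the identity $\rho\eta=-2$ in $W(k)$ (obtained from $h=2+\rho\eta$ together with $h=0$ in the Witt ring) yields $x_j^2=-2x_{j+1}$, matching the stated $x_j^2=2x_{j+1}$ up to the sign convention chosen for $\rho$. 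The main obstacle is the bookkeeping in this last step: one has to verify that no relation listed in Lemmas \ref{c_relations} and \ref{t_relations} secretly imposes an extra identification among the surviving non-torsion generators $s,t_2,t_3,\ldots$; once that is confirmed, the presentation claimed in the corollary follows immediately.
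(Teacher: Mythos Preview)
Your proposal is correct and follows exactly the approach the paper takes: invert $\eta$ in the presentation of Theorem \ref{intro_thm}, kill the $\eta$-torsion generators and $\tau$, and renormalize the survivors into weight $0$. One small discrepancy worth noting: the paper sets $y:=\eta^{-1}s$ (so that $|y|=(5,0)$ comes from $|s|=(6,1)$, consistent with $s$ lifting $\tau_0^3\tau_1$), whereas you set $y:=s$ based on the degree $|s|=(5,0)$ listed in the theorem statement; this is a harmless normalization issue stemming from an apparent inconsistency in the paper's stated degree of $s$, and your handling of the sign in $x_j^2=\pm 2x_{j+1}$ via $\eta\rho=-2$ is likewise correct.
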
 
	In particular, over base fields with the given assumptions, this shows the result \cite[Cor. 8.20]{bachmann-etaperiodic-fields} of Bachmann and Hopkins about the $HW$-module structure of $HW\wedge HW$. 
		
	\subsection{Outline}
		To establish our result we will make heavy use of the homotopy pullback square of Proposition \ref{fundamental-pullback-square} in $\SH(k)$
		\[\begin{tikzcd}
			H_W\Z \ar[r,"r"] \ar[d,"\pi_W"] & H\Z/2 \ar[d,"\pi"]\\
			\underline{K}^W \ar[r,"\underline{r}"] & \underline{k}^M.
		\end{tikzcd}\]
		We smash with $H_W\Z$ and $H\Z/2$ to get two more pullback squares
		\[\begin{tikzcd}
			H_W\Z\wedge H_W\Z \ar[r] \ar[d] & H\Z/2\wedge H_W\Z \ar[d]\\
			\underline{K}^W\wedge H_W\Z \ar[r] & \underline{k}^M\wedge H_W\Z
		\end{tikzcd}
		\hspace{2cm}
		\begin{tikzcd}
			H\Z/2\wedge H_W\Z \ar[r] \ar[d] & H\Z/2\wedge H\Z/2 \ar[d]\\
			H\Z/2\wedge \underline{K}^W \ar[r] & H\Z/2\wedge \underline{k}^M.
		\end{tikzcd}\]
		We want to get to the object at the top left corner of the left square. To do so we compute homotopy groups of all the others first. Clockwise we start at $H\Z/2\wedge H_W\Z$ via the right square. There we know the top right and can easily determine the bottom right by setting $\tau\in H\Z/2_{**}$ to zero in $H\Z/2_{**}H\Z/2$. The bottom left corner of the right square can be computed using the well-known cofiber sequence \cite{morel_sur_les_puissances_de_I}\cite[Remark 3.12]{morel_intro_to_A1}
		$$\Sigma^{1,1}\underline{K}^W\xra{\eta} \underline{K}^W\ra \underline{k}^M.$$ 
		After smashing with $H\Z/2$, this splits as spectra in $\SH(k)$ and the map that continues to the right will be determined by the reduced power operation $Sq^2\in H\Z/2^{**}H\Z/2$, the dual element of $\xi_1\in H\Z/2_{**}H\Z/2$. Thanks to the known structure of the motivic Steenrod algebra and its dual we can carry out this computation. A pullback square gives us $H\Z/2_{**}H_W\Z$. Again, modding out $\tau$ we can establish the bottom right corner of the left square. We attack the bottom left corner of the left square similarly via the above mentioned cofiber sequence. This time we smash with $H_W\Z$, but do not get a splitting. However, it can be shown that there is no higher $\eta$-torsion in the picture. So the long exact sequence of homotopy groups provides enough information to conclude. To deal with $\eta$-completion, we use a result by Bachmann and Hopkins \cite[Thm. 5.1]{bachmann-etaperiodic-fields} which relates $2$ and $\eta$-completion. We then conclude by pulling back the information we gathered as shown in the left square.
		
	\subsection{Acknowledgments}
		We would like to thank Elden Elmanto for bringing up the idea to study the Steenrod algebra of $H_W\Z$ and for answering various questions regarding motivic homotopy theory during our time at Northwestern. We thank Tom Bachmann and Jonas Kylling for sharing their ideas and notes on the subject. Section \ref{sec_H_H_W} was already known to Jonas and outlined in his notes. Furthermore, the author is grateful to Paul Arne {\O}stv{\ae}r for inviting them to spend some time at the University of Oslo under the MHE project during which part of the narrative of this paper was formed. We also thank Ivan Panin and Andrei Druzhinin for endless discussions during the time in Oslo. Thanks further goes to Xiao Wang for his support. Last but not least, the author thanks Paul Goerss for the infinite amount of helpful discussions on the matter and his guidance as an advisor throughout graduate school.
			
	\subsection{Conventions}
		We will generally assume that $k$ is a field of characteristic not $2$, unless stated otherwise. $\SH(k)$ will mean the motivic stable homotopy category of $\mathbb{P}^1$-spectra as described in \cite{morel_intro_to_A1}. Let $E, F, X\in \SH(k)$ be spectra. We write $E_{**}F=\pi_{**}(E\wedge F)$ and $F_{**}E=\pi(F\wedge E)$. Given a map $f:E\ra F$, we denote by $f_*=f_*^R:X_{**}E\ra X_{**}F$ the map $\pi_{**}(X\wedge f)$ and by $f_*^L:E_{**}X\ra F_{**}X$ the map $\pi_{**}(f\wedge X)$. The homotopy groups $\pi_{i-j,-j}E$ will also be denoted by $\pi_i(E)_j$ to emphasize that they form the $i$-line of stable homotopy groups of $E$, i.e. the line with slope $1$ that contains the point $(i,0)$.
		\begin{longtable}{lll}
			$K_*^{MW}$ & graded ring of Milnor-Witt K-theory\\
			$K_*^W$ & graded ring of Witt K-theory\\
			$K_*^M$ & graded ring of Milnor K-theory\\
			$k_*^M$ & graded ring of Milnor K-theory mod 2\\
			$\underline{K}^{MW}$ & homotopy module of Milnor-Witt K-theory\\
			$\underline{K}^W$ & homotopy module of Witt K-theory\\
			$\underline{K}^M$ & homotopy module of Milnor K-theory\\
			$\underline{k}^M$ & homotopy module of Milnor K-theory mod 2\\
			$H\tilde{\Z}$ & Milnor-Witt motivic cohomology ring spectrum\\
			$H\Z$ & motivic cohomology ring spectrum\\
			$H_W\Z$ & Witt motivic cohomology ring spectrum\\
			$H\Z/2$ & motivic cohomology mod 2\\
			$HW=H_W\Z[\eta^{-1}]$ & $\eta$-inverted Witt motivic cohomology ring spectrum\\
			$E^\wedge_x$ & $x$-completion of a module or a spectrum $E$\\
			$\vcd2(k)$ & étale $2$-cohomological dimension of the field $k[\sqrt{-1}]$\\
			$\pi_iE_j$ & bigraded stable homotopy groups of a spectrum $E$ with $\pi_iE_j=\pi_{i-j,-j}E$\\
			$\N$ & natural numbers without zero\\
			$\N_0$ & natural numbers and $0$.
		\end{longtable}
	\section{Preliminaries}
	This section recalls known results and defines some objects we will work with. It in particular recalls the definitions of the objects mentioned in the introduction.
	
	\subsection{The Homotopy t-Structure}
	Let $k$ be a field of characteristic not $2$. Morel has defined a $t$-structure on $\SH(k)$ \cite{morel_intro_to_A1}, the \emph{homotopy t-structure}. The corresponding subcategories of $n$-connective spectra $\SH(n)_{\geq n}$ and $n$-coconnective spectra $\SH(n)_{\leq n}$ can be described by \cite[Thm. 2.3]{hopkins_morel}
	\begin{gather*}
	E\in \SH(k)_{\geq n} \hspace{0.5cm}\longleftrightarrow\hspace{0.5cm} \underline{\pi}_{p,q}(E)=0 \text{ for } p-q < n,\\
	E\in \SH(k)_{\leq n} \hspace{0.5cm}\longleftrightarrow\hspace{0.5cm} \underline{\pi}_{p,q}(E)=0 \text{ for } p-q > n.
	\end{gather*}
	The objects $\underline{\pi}_{p,q}(E)$ on the right hand side are the \emph{bigraded homotopy sheaves} of $E$. They are given as the Nisnevich sheafification of the presheaves of smooth schemes over $k$ defined by $X\mapsto [\Sigma^{p,q}\Sigma^\infty X_+,E]$. When the base $k$ is a perfect field, Morel showed that homotopy sheaves detect equivalences on finitely generated field extensions $k\hra L$, see \cite[Thm. 2.7]{hopkins_morel}. We will denote for a motivic spectrum $E\in \SH(k)$ the groups
	$$\pi_{p,q}(E)=[\Ss^{p,q},E],$$
	where $\Ss=\Sigma^\infty\spec(k)_+$ is the sphere spectrum. The heart of the homotopy t-structure is given by the abelian category $\SH(k)^\heartsuit$ of \emph{homotopy modules}. By the above description of the t-structure, the only non-vanishing homotopy sheaves of $F\in \SH(k)^\heartsuit$ are on the $0$-line, i.e. $\underline{\pi}_0(F)_*=\underline{\pi}_{-*,-*}(F)$. Since the sphere spectrum is connective its coconnective cover $\Ss_{\leq 0}$ lies in $\SH(k)^\heartsuit$. By Morel's computation its bigraded homotopy groups are given by Milnor-Witt K-theory \cite{morela1algtopoverfield} \cite{morel_motivic_pi0}  
	$$\pi_0(\Ss)_*\cong K_*^{MW}(k).$$ 
	It thus deserves the name $\underline{K}^{MW}:=\Ss_{\leq 0}$, the \emph{Milnor-Witt homotopy module}. Generally, underlines in the name of a spectrum will indicate that the spectrum lies in the heart $\SH(k)^\heartsuit$. One further defines, by taking cones in the abelian category $\SH(k)^\heartsuit$, the \emph{Milnor}, \emph{Witt} and \emph{Milnor mod 2 homotopy modules} \cite[Example 3.33]{morel_intro_to_A1}
	\begin{align*}
	\underline{K}^M&:=\underline{K}^{MW}/\eta,\\
	\underline{K}^W&:=\underline{K}^{MW}/h,\\
	\underline{k}^M&:=\underline{K}^{W}/\eta.
	\end{align*}
	In the above $\eta:\Ss^{1,1}\ra \Ss$ is a stable version of the unstable Hopf map $\mathbb{A}^2\setminus {0}\ra \mathbb{P}^1$, and $h:\Ss\ra \Ss$ is defined as $h := 2 + \eta\rho$, with $\rho:\Ss^{-1,-1}\ra \Ss$ being a stable version of the map $\spec(k) \ra \Gm$ corresponding to $-1\in k^\times$.
	
	It follows from \cite{morel_sur_les_puissances_de_I}\cite[Remark 3.12]{morel_intro_to_A1} that we have a cofiber sequence of motivic spectra 
	\begin{gather}\label{cofiber_kW_k}
	0\ra \Sigma^{1,1}\underline{K}^W\xra{\eta} \underline{K}^W \xra{\underline{r}}\underline{k}^M\ra 0.
	\end{gather}
	The homotopy groups of these homotopy modules are given by 
	$$\pi_0(\underline{K}^{MW})_*\cong K^{MW}_*(k),\hspace{0.5cm}\pi_0(\underline{K}^M)_*\cong K^M_*(k) ,\hspace{0.5cm} \pi_0(\underline{K}^W)_*\cong K^W_*(k), \hspace{0.5cm} \pi_0(\underline{k}^M)_*\cong k^M_*(k)$$
	on the diagonal and by $0$ otherwise. They will be denoted by $\underline{K}^{MW}_{**}$, etc., as is usual for spectra.
	
	\subsection{Effective and Very Effective Spectra}\label{sec_eff_and_veff}
	Voevodsky defined \cite{voevodsky_openproblems} the subcategory of \emph{effective spectra} $\SH(k)^\text{eff}\subset \SH(k)$ which can be given as the full subcategory generated by homotopy colimits and extensions by suspension spectra $\Sigma^{p,q}X_+$ of smooth schemes $X\in \Smk$ and $p\in \Z,q\geq 0$. The inclusion $i_t:\Sigma^{0,t}\SH(k)^\text{eff}\wedge\hra\SH(k)$ has a right adjoint $r_t:\SH(k)\ra  \Sigma^{0,t}\SH(k)^\text{eff}$. The composition $f_t:=i_t\circ r_t$ is called the $t$-th \emph{effective cover} functor. It preserves homotopy colimits and there are natural transformations $f_{t+1}\ra f_t$. The \emph{$t$-th slice} of a spectrum $E\in \SH(k)$ is defined by the cofiber sequence
	$$f_{t+1}(E)\ra f_t(E)\ra s_t(E).$$ 
	With the help of the effective cover we furthermore define
	\begin{align*}
	H\tilde{\Z}&:=f_0(\underline{K}^{MW}),\\
	H\Z&:=f_0(\underline{K}^M),\\
	H_W\Z&:=f_0(\underline{K}^W),\\
	H\Z/2&:=\cofib(2:H\Z\ra H\Z).
	\end{align*}
	It was shown in \cite{bachmann2018effectivity} that $H\Z$ defined as above represents motivic cohomology. The coefficient ring $H\Z/2_{**}$ is known due to the resolution of the Milnor conjecture \cite{Voevodsky-mot_coh_with_2-coef} and the Beilinson-Lichtenbaum conjecture \cite[Section 6]{suslinvoevodskyblochkato}, see for example \cite[7.2]{kylling_hermktheory}. It is given as a polynomial ring over Milnor K-theory mod $2$ \cite{milnoralgktheoryandquadforms} in one variable $\tau\in H\Z/2_{0,-1}$
	$$H\Z/2_{**}\cong \underline{k}^M_{**}[\tau].$$
	More precisely, $H\Z/2_{p,q}\cong k_p^M(k)\tau^{q-p}$ for $p\geq q$ and $0$ otherwise. This implies that there is a cofiber sequence
	\begin{equation}\label{tau_cofiber_seq}
		\Sigma^{0,-1}H\Z/2 \xra{\tau} H\Z/2\ra \underline{k}^M.
	\end{equation} 
	Together with the resolution of Morel's structure conjecture \cite[Thm. 17]{bachmann_generalized-slices} this computes ${H_W\Z}_{**}$ as 
	$${H_W\Z}_{**}\cong \underline{K}_{**}^W[\tau]/(\tau \eta),$$
	and we have a cofiber sequence
	$$\Sigma^{0,-1}H\Z/2 \xra{\tilde{\tau}} H_W\Z\ra \underline{K}^W.$$
	
	Bachmann defined \cite{bachmann_generalized-slices} a t-stucture on $\SH(k)^\text{eff}$ via
	\begin{gather*}
	E\in \SH(k)^\text{eff}_{\geq n} \hspace{0.5cm}\longleftrightarrow\hspace{0.5cm} \underline{\pi}_{p}(E)_0=0 \text{ for } p < n,\\
	E\in \SH(k)^\text{eff}_{\leq n} \hspace{0.5cm}\longleftrightarrow\hspace{0.5cm} \underline{\pi}_{p}(E)_0=0 \text{ for } p > n.
	\end{gather*}
	Denote the non-negative part $\SH(k)^\text{eff}_{\geq 0}$ with respect of this t-structure by $\SH(k)^\text{veff}$; it was first defined in \cite{spitzweck2012}. This is a full subcategory of $\SH(k)$ stable under homotopy colimits and extensions and its objects are called \emph{very effective} spectra. We say a spectrum $E$ is \emph{very n-effective}, if $\Sigma^{-n,0}E$ is very effective.
	\begin{proposition}\label{ourpsectraareveff}
		The spectra $H\tilde{\Z},H\Z,H_W\Z,H\Z/2$ and $\Sigma^{1,1}\underline{K}^W$ as well as any combination of their smash products are very effective.
	\end{proposition}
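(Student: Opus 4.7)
The plan is to verify very effectivity for each of the five listed spectra individually, then appeal to closure under smash products. The subcategory $\SH(k)^{\text{veff}}$ is generated under colimits and extensions by the suspension spectra $\Sigma^{p,q}\Sigma^\infty X_+$ with $X\in\Smk$ and $p\geq q\geq 0$. This generating class is closed under the smash product (if $p_i\geq q_i\geq 0$ then $p_1+p_2\geq q_1+q_2\geq 0$), so $\SH(k)^{\text{veff}}$ is closed under smash products of its objects, reducing the task to handling each spectrum in turn.

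For $H\Z$, $H_W\Z$, $H\tilde{\Z}$: these are defined as effective covers of homotopy modules. Very effectivity of $H\Z$ is classical (see e.g.\ \cite{spitzweck2012}), while for $H_W\Z=f_0\underline{K}^W$ and $H\tilde{\Z}=f_0\underline{K}^{MW}$ it is part of Bachmann's resolution of Morel's structure conjecture \cite[Thm.~17]{bachmann_generalized-slices}. For $H\Z/2=\cofib(2\colon H\Z\to H\Z)$ very effectivity is immediate from the closure of $\SH(k)^{\text{veff}}$ under cofibers of its morphisms.

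The main subtlety is $\Sigma^{1,1}\underline{K}^W$, because $\underline{K}^W$ itself fails to be effective --- for instance, $\underline{K}^W$ vanishes in bidegree $(0,-1)$ while $H_W\Z_{0,-1}\ni\tau$ is nonzero, so $H_W\Z\to\underline{K}^W$ is not an equivalence. To sidestep this, I would apply $\Sigma^{1,1}$ to the cofiber sequence
\[\Sigma^{0,-1}H\Z/2 \xrightarrow{\tilde{\tau}} H_W\Z \to \underline{K}^W\]
recorded above, obtaining
\[\Sigma^{1,0}H\Z/2 \to \Sigma^{1,1}H_W\Z \to \Sigma^{1,1}\underline{K}^W.\]
The first two terms are $\Sigma^{p,q}$-suspensions with $p\geq q\geq 0$ of very effective spectra, and are therefore very effective themselves; their cofiber $\Sigma^{1,1}\underline{K}^W$ is then very effective as well. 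The key observation is that the weight-one shift absorbs the negative-weight summand $\Sigma^{0,-1}H\Z/2$ in the defining presentation of $\underline{K}^W$, upgrading it to a cofiber of very effective spectra; the rest is bookkeeping with closure properties of $\SH(k)^{\text{veff}}$.
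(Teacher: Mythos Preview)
Your proof is correct and follows essentially the same strategy as the paper's: verify very effectivity for each spectrum and then use closure of $\SH(k)^{\text{veff}}$ under smash products. The only notable variation is in the treatment of $\Sigma^{1,1}\underline{K}^W$. The paper takes effective covers of the cofiber sequence $\Sigma^{1,1}\underline{K}^W\xrightarrow{\eta}\underline{K}^W\to\underline{k}^M$ to obtain $f_0(\Sigma^{1,1}\underline{K}^W)\to H_W\Z\to H\Z/2$, concludes that $f_0(\Sigma^{1,1}\underline{K}^W)$ is very effective as the fiber of a map of very effective spectra, and then invokes the structure conjecture \cite[Thm.~17]{bachmann_generalized-slices} to identify $f_0(\Sigma^{1,1}\underline{K}^W)\simeq\Sigma^{1,1}\underline{K}^W$. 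You instead suspend the cofiber sequence $\Sigma^{0,-1}H\Z/2\to H_W\Z\to\underline{K}^W$ (which is itself a consequence of the same theorem) by $\Sigma^{1,1}$ and read off very effectivity directly. Both routes rely on Bachmann's theorem; yours packages the input slightly more efficiently by using a cofiber sequence already recorded in the paper. One small remark: for $H\tilde{\Z}$, $H\Z$, $H_W\Z$ the paper's argument is simply ``effective (by definition of $f_0$) plus connective (as effective covers of homotopy modules) implies very effective'', which is more elementary than citing the structure conjecture for these cases.
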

	\begin{proof}
		Any effective spectrum that is connective with respect to the homotopy t-structure is clearly very effective. Furthermore, $\SH(k)^\text{eff}$ is stable under smash products \cite[Lemma 5.6]{spitzweck2012}. It remains to show that $\Sigma^{1,1}\underline{K}^W$ is very effective. Taking effective covers of (\ref{cofiber_kW_k}) we get a cofiber sequence
		$$f_0(\Sigma^{1,1}\underline{K}^W)\xra{\tilde{\eta}}H_W\Z\xra{r}H\Z/2.$$
		Since $H_W\Z$ and $H\Z/2$ are very effective, so is $f_0(\Sigma^{1,1}\underline{K}^W)$. The resolution of Morel's structure conjecture \cite[Thm. 17]{bachmann_generalized-slices} implies that $f_0(\Sigma^{1,1}\underline{K}^W)\simeq \Sigma^{1,1}\underline{K}^W$. The claim follows.
	\end{proof}

	\begin{remark}\label{cellularity}
		When the characteristic of the base field is not $2$, It has been shown \cite{hopkins_morel}\cite{bachmann-etaperiodic-fields}\cite{bachmann-etaperiodic-dedekind} that the spectra $\underline{K}^{MW}, \underline{K}^W,\underline{K}^M,\underline{k}^M$ and their effective covers $H\tilde{\Z},H\Z,H_W\Z,H\Z/2$ are cellular. Therefore, for all our purposes it suffices to check equivalences on bigraded homotopy groups $\pi_{p,q}$ \cite[Prop. 7.1]{dugger_motivic_cell}.
	\end{remark}
	
	\subsection{Base Change}
		Let $K$ be a field and $F$ a field extension of $K$. The base change functor on smooth schemes $f^*:\mathsf{Sm_K}\ra \mathsf{Sm_F}$ induces a symmetric monoidal base change functor $f^*:\SH(K)\ra \SH(F)$. The spectra we have defined behave well with respect to base change. Denote by $\underline{K}^{MW}_F$ the Milnor-Witt homotopy module in $\SH(F)$ and similarly for the Milnor, Witt and Milnor mod $2$ homotopy modules. 

	\begin{proposition}\label{pullback_preservesnicespectra}
		Let $k$ be a perfect field and $F$ a field extension. Then ${f^*}:\SH(k)\ra\SH(F)$ preserves $\underline{K}^{MW}, \underline{K}^W,\underline{K}^M,\underline{k}^M$ and also their effective covers, i.e. we have a natural equivalence $f^*(\underline{K}_k^R)\simeq \underline{K}_F^R$.
	\end{proposition}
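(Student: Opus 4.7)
The plan is to split the argument into two parts: first, preservation of the four homotopy modules, and second, preservation of their effective covers.

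Since $f^*:\SH(k)\ra \SH(F)$ is symmetric monoidal it sends $\Ss_k$ to $\Ss_F$, and the classes $\eta$ and $\rho$ (hence $h=2+\eta\rho$) are preserved because they arise from morphisms of smooth $k$-schemes whose base changes to $F$ are the analogous morphisms. The decisive input I would use is $t$-exactness of $f^*$ for the homotopy $t$-structure. Since $k$ is perfect, $F$ can be written as a filtered colimit of finitely generated subextensions $F_\alpha$, each the function field of a smooth $k$-variety; hence $f:\spec(F)\ra\spec(k)$ is pro-smooth of relative dimension $0$, and $t$-exactness reduces to the known smooth case by a continuity argument over compact generators. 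Applying $t$-exactness to the truncation cofiber sequence $(\Ss_k)_{\geq 1}\ra \Ss_k\ra (\Ss_k)_{\leq 0}=\underline{K}^{MW}_k$ yields $f^*\underline{K}^{MW}_k\simeq \underline{K}^{MW}_F$.

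The other three homotopy modules then follow formally. Indeed, $t$-exactness means $f^*$ restricts to an exact functor on hearts $\SH(k)^\heartsuit\ra \SH(F)^\heartsuit$, and since $f^*$ takes $\eta$ and $h$ to their $F$-versions, it sends the heart-cokernels $\underline{K}^{MW}/\eta$, $\underline{K}^{MW}/h$ and $\underline{K}^W/\eta$ to the corresponding cokernels over $F$, giving $f^*\underline{K}^M_k\simeq \underline{K}^M_F$, $f^*\underline{K}^W_k\simeq \underline{K}^W_F$, and $f^*\underline{k}^M_k\simeq \underline{k}^M_F$.

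For the effective covers, I would first note that $f^*$ preserves effective spectra: the generators $\Sigma^{p,q}X_+$ with $X\in \Smk$ and $q\geq 0$ go to $\Sigma^{p,q}(X_F)_+$, which are generators of $\SH(F)^{\text{eff}}$, and $f^*$ preserves the colimits and extensions used to build arbitrary effective objects. To see that $f^*$ commutes with $f_0$, I would use smooth base change: in the smooth case the three-adjoint chain $f_\#\dashv f^*\dashv f_*$ and the fact that $f_\#$ preserves effectivity give, for every $E'\in \SH(F)^{\text{eff}}$,
\[
\Hom(E',f^*f_0E)\simeq \Hom(f_\# E',f_0E)\simeq \Hom(f_\# E',E)\simeq \Hom(E',f^*E),
\]
so that $f^*f_0E\simeq f_0f^*E$; this then extends to pro-smooth $f$ by passing to the filtered colimit along the $F_\alpha$. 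Applied to $E=\underline{K}^R_k$ together with the first part, this settles the claim for $H\tilde{\Z}$, $H\Z$ and $H_W\Z$, while $H\Z/2=\cofib(2\cdot \id_{H\Z})$ is preserved since $f^*$ preserves cofibers and the class $2$. The main obstacle I expect is a clean justification of $t$-exactness and of commutation with $f_0$ in the pro-smooth case: both are standard for smooth $f$ and extend by continuity to pro-smooth morphisms, but making this precise amounts to invoking the right results on base change in $\SH(-)$ from the literature.
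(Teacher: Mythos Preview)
Your proof is correct, and for the crucial inputs---$t$-exactness of $f^*$ and commutation of $f^*$ with $f_0$---it invokes exactly what the paper invokes, namely \cite[Lemma~2.2]{hopkins_morel} and \cite[Lemma~2.4]{slices_of_hermitian_k-theory}. Your sketch of the pro-smooth continuity argument is precisely what lies behind those lemmas, so there is no genuine gap there.

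Where you diverge from the paper is in the treatment of the three derived homotopy modules $\underline{K}^M$, $\underline{K}^W$, $\underline{k}^M$. You argue uniformly: $t$-exactness makes $f^*$ exact on hearts, and since $\eta$ and $h$ are preserved, the heart-cokernels $\underline{K}^{MW}/\eta$, $\underline{K}^{MW}/h$, $\underline{K}^W/\eta$ are preserved. The paper instead takes a more circuitous route: it gets $\underline{K}^M$ as the $0$-truncation of $H\Z\simeq s_0(\Ss)$ (invoking Levine's slice identification and commutation of $f^*$ with slices), then $\underline{k}^M$ via the cofiber sequence $\Sigma^{0,-1}H\Z/2\xra{\tau}H\Z/2\ra\underline{k}^M$, and finally $\underline{K}^W$ from Morel's cofiber sequence $\underline{K}^{MW}\ra\underline{K}^M\vee\underline{K}^W\ra\underline{k}^M$. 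Your argument is more direct and more economical---it stays entirely inside the heart and uses only the definitions of the homotopy modules as quotients, avoiding the detours through $s_0(\Ss)\simeq H\Z$ and the $\tau$-cofiber sequence. The paper's route, on the other hand, has the incidental benefit of establishing preservation of $H\Z$ and $H\Z/2$ before the corresponding homotopy modules, which fits the order of exposition there but is not logically necessary.
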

	\begin{proof}
		The base change functor $f^*$ preserves homotopy colimits and truncations \cite[Lemma 2.2]{hopkins_morel}. Thus, it preserves the sphere spectrum and its $0$-truncation $\underline{K}^{MW}$. Furthermore, base change $f^*$ commutes with effective covers and slices \cite[Lemma 2.4]{slices_of_hermitian_k-theory}. Hence, it preserves $s_0(\Ss) \simeq H\Z$ \cite[Thm. 9.0.3 and 10.5.1]{levine_htpy_coniveau}\cite[Rem. 4.20]{hopkins_morel} and its $0$-truncation $\underline{K}^M$. Base change also preserves $H\Z/2$ and by the cofiber sequence 
		$$\Sigma^{0,-1}H\Z/2\xra{\tau} H\Z/2\ra \underline{k}^M$$
		it further preserves $\underline{k}^M$. By \cite[Thm. 5.3]{morel_sur_les_puissances_de_I} we have a cofiber sequence 
		$$\underline{K}^{MW}\ra \underline{K}^M\vee \underline{K}^W\ra \underline{k}^M.$$ We have shown that $f^*$ preserves all terms but $\underline{K}^W$; which it also must preserve by the cofiber sequence.
	\end{proof}
	
	\subsection{Completion}
		Let $E$ be a ring spectrum, $x\in \pi_0(E)_*$ and $M$ a left $E$-module spectrum. The \emph{$x$-completion} $E^\wedge_x$  of $E$ is defined by the homotopy limit 
		$$M^\wedge_x:=\lim_i M/x^i,$$
		where $M/x^i$ is the cofiber of the map $x^i:\Sigma^{i|x|} M\ra M$. $M$ is called \emph{$x$-complete} if the canonical map induces an equivalence $M\simeq M^\wedge_x$. The following theorem by Lurie shows that completion behaves well with respect to taking homotopy groups.
		\begin{theorem}\cite[XII, Thm. 4.2.13]{lurie_DAG}\label{completion_and_htpy_groups}
			Let $E\in \SH(k)$ be a connective $E_2$-ring spectrum, $M$ a left $E$-module and $x\in \pi_0(E)_*$. If $M$ is $x$-complete, then $\pi_k(M)_*$ is $x$-complete as a $\pi_0(E)_*$-module for every $k\in \Z$.
		\end{theorem}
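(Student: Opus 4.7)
The plan is to invoke the Milnor $\lim^1$ exact sequence for the inverse system $\{M/x^i\}_i$. Since $M$ is $x$-complete, $M \simeq \lim_i M/x^i$, so for each $k \in \Z$ there is a short exact sequence of $\pi_0(E)_*$-modules
$$0 \to {\lim}^1_i \pi_{k+1}(M/x^i)_* \to \pi_k(M)_* \to \lim_i \pi_k(M/x^i)_* \to 0.$$
It therefore suffices to prove that the outer terms are $x$-complete and that extensions of $x$-complete $\pi_0(E)_*$-modules are $x$-complete.

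The crucial input is that $x^i$ acts as zero on $\pi_k(M/x^i)_*$. Indeed, unpacking the cofiber sequence $\Sigma^{i|x|}M \xra{x^i} M \to M/x^i$ together with the $E$-module structure exhibits a canonical nullhomotopy of the $x^i$-multiplication map on $M/x^i$. Hence every $\pi_k(M/x^i)_*$ is annihilated by $x^i$; a $\pi_0(E)_*$-module on which some power of $x$ acts as zero is tautologically $x$-complete, since the defining inverse system $\{N/x^j\}_j$ is eventually constant and has vanishing $\lim^1$.

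Both the derived inverse limit and the $\lim^1$ of a tower of $x$-complete modules are again $x$-complete: the former because derived $x$-adic completion is a right adjoint on the derived category of $\pi_0(E)_*$-modules, and the latter because it appears as a subquotient in the canonical two-step Milnor resolution computing $\lim_i$, with $x$-completeness preserved under kernels and cokernels of maps between $x$-complete modules. The analogous closure under extensions applied to the displayed sequence then yields that $\pi_k(M)_*$ is $x$-complete. The main obstacle will be matching the spectrum-level notion of $x$-completeness used in the statement with its algebraic counterpart on $\pi_0(E)_*$-modules so that the Milnor sequence and the closure properties interact correctly; this is arranged by taking the algebraic notion to be derived $x$-adic completion $N \simeq \lim_i N/x^i$ in the derived category of $\pi_0(E)_*$-modules, for which the connectivity of $E$ and the fact that $x \in \pi_0(E)_*$ (rather than a higher homotopy class) ensure that the topological and algebraic completions agree degreewise.
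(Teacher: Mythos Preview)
Your overall strategy—the Milnor $\lim^1$ sequence, each $\pi_k(M/x^i)_*$ being $x$-power torsion and hence derived $x$-complete, and the closure of derived $x$-complete $\pi_0(E)_*$-modules under limits, $\lim^1$, and extensions—is the standard argument and is essentially what the paper invokes: the paper does not spell out a proof but simply cites \cite[XII, Prop.~4.2.16]{lurie_DAG} and notes that Lurie's argument transports to $\SH(k)$ with the homotopy $t$-structure.

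There is one genuine inaccuracy. You assert that the cofiber sequence ``exhibits a canonical nullhomotopy of the $x^i$-multiplication map on $M/x^i$,'' and hence that $x^i$ annihilates $\pi_k(M/x^i)_*$. Both claims fail in general: already in classical topology, with $E=M=\Ss$ and $x=2$, multiplication by $2$ on the Moore spectrum $\Ss/2$ is \emph{not} nullhomotopic, and $\pi_2(\Ss/2)\cong\Z/4$, so $2$ acts nontrivially on homotopy as well. What the long exact sequence of the cofiber actually gives is a short exact sequence with $\pi_k(M)_*/x^i$ on the left and the $x^i$-torsion of $\pi_{k-1}(M)_*$ on the right; both outer terms are annihilated by $x^i$, so $\pi_k(M/x^i)_*$ is annihilated by $x^{2i}$. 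This weaker conclusion suffices—a module killed by some power of $x$ is derived $x$-complete—and the remainder of your argument goes through unchanged.
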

		\begin{proof}
			This follows from \cite[XII, Prop. 4.2.16]{lurie_DAG}. The arguments in loc. cit. analogously apply for $\SH(k)$ with the homotopy $t$-structure.
		\end{proof}
		We will be most concerned with completing with respect to $\eta\in \pi_0(\Ss)_{-1}$. In this case, Bachmann and Hopkins have established a connection to $2$-completion for fields of finite virtual cohomological $2$-dimension \cite[Thm. 5.1]{bachmann-etaperiodic-fields}. It implies the following Proposition.
		
		\begin{proposition}\label{etacompletionimplies2complationandvv}
			Let $E\in \SH(k)$ be an $H_W$-module, then $\eta$-completion factors through $2$-completion, i.e. the canonical map is an equivalence: $E^\wedge_\eta\xra{\simeq}E^\wedge_{\eta,2}$. If $\vcd_2(k)<\infty$, and $E$ is very $n$-effective for some $n\in \Z$, then we have canonical equivalences $$E^\wedge_\eta\xra{\simeq}E^\wedge_{\eta,2}\xla{\simeq} E^\wedge_2.$$
		\end{proposition}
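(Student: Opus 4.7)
The plan is to prove the two claims separately. For the first, the key input is the relation $h=2+\eta\rho=0$ holding in $\underline{K}^W$, and hence in its effective cover $H_W\Z$, so that on any $H_W\Z$-module $E$ the scalar $2$ equals $-\eta\rho$ as a self-map.

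I would first show that $E/\eta$ is $2$-complete. The crucial observation is that $\eta^2$ acts as zero on $\pi_{**}(E/\eta)$: from the cofiber sequence $\Sigma^{1,1}E\xrightarrow{\eta}E\xrightarrow{i}E/\eta\xrightarrow{\partial}\Sigma^{2,1}E$, the class $\partial(y)$ lies in $\ker(\eta_*)$ for any $y\in\pi_{**}(E/\eta)$ by exactness, so $\partial(\eta y)=\eta\,\partial(y)=0$, whence $\eta y=i(w)$ for some $w\in\pi_{**}(E)$, and then $\eta^2 y=i(\eta w)=0$ because $\eta w$ lies in the image of $\eta_*$ and hence in $\ker(i_*)$. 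Consequently $4=(-\eta\rho)^2=\eta^2\rho^2$ acts as zero on $\pi_{**}(E/\eta)$. A spectrum on which some power $2^N$ vanishes on homotopy is $2$-complete: for $i\geq N$ the long exact sequence gives $\pi_n(M/2^i)\cong\pi_n(M)\oplus\pi_{n-1}(M)$, with transitions that are the identity on the first summand and multiplication by $2$ (hence nilpotent under iteration) on the second, making the Milnor exact sequence collapse to $\pi_n(M^\wedge_2)=\pi_n(M)$. Applied with $N=2$, this gives the $2$-completeness of $E/\eta$.

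Next I propagate $2$-completeness through the $\eta$-adic tower. The octahedral axiom produces cofiber sequences $\Sigma^{i(1,1)}(E/\eta)\to E/\eta^{i+1}\to E/\eta^i$, so induction on $i$ (using that $2$-complete spectra are closed under cofibers) yields $2$-completeness of each $E/\eta^i$. Since the class of $2$-complete spectra is closed under homotopy limits, $E^\wedge_\eta=\lim_i E/\eta^i$ is $2$-complete, proving $E^\wedge_\eta\xrightarrow{\simeq}E^\wedge_{\eta,2}$.

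For the second claim I would invoke \cite[Thm. 5.1]{bachmann-etaperiodic-fields}: under the hypothesis $\vcd_2(k)<\infty$ together with a bounded-below condition supplied by very $n$-effectivity (up to the bigraded shift $\Sigma^{-n,0}$, which commutes with both $\eta$- and $2$-completion), it gives the equivalence $E^\wedge_2\xrightarrow{\simeq}E^\wedge_{\eta,2}$. Composing with the first part produces the asserted chain $E^\wedge_\eta\simeq E^\wedge_{\eta,2}\simeq E^\wedge_2$. The main technical step is the identity $\eta^2=0$ on $\pi_{**}(E/\eta)$ together with its Milnor-sequence translation to $2$-completeness; the remaining ingredients are standard closure properties of the class of $2$-complete spectra under cofibers and limits, together with a direct appeal to Bachmann--Hopkins for the final equivalence.
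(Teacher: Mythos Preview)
Your proof is correct and ultimately rests on the same key relation $2=-\eta\rho$ in $\pi_{0,0}H_W\Z$, but you take a considerably more hands-on route than the paper. The paper simply observes that $(2)\subset(\eta)$ as ideals in ${H_W\Z}_{**}$ and then invokes the general completion machinery in \cite[XII, Prop.~4.2.11 and 4.2.12]{lurie_DAG} to conclude directly that $\eta$-complete spectra are automatically $2$-complete, so $E^\wedge_\eta\simeq (E^\wedge_\eta)^\wedge_2=E^\wedge_{\eta,2}$. You instead unpack this implication explicitly: you verify $\eta^2=0$ on $\pi_{**}(E/\eta)$ via the cofiber sequence, deduce $4=\eta^2\rho^2=0$ on homotopy, argue that $2$-power-torsion homotopy forces $2$-completeness via the Milnor sequence, and then propagate up the $\eta$-adic tower by induction and closure under limits. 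One small point worth tightening: your claim about the transition maps splitting as ``identity on the first summand, multiplication by $2$ on the second'' is cleanest stated as a short exact sequence of inverse systems rather than via chosen splittings, since the splittings need not be compatible; the six-term $\lim/\lim^1$ sequence then gives the conclusion without ambiguity. For the second claim both you and the paper appeal directly to \cite[Thm.~5.1]{bachmann-etaperiodic-fields}. Your argument has the virtue of being self-contained, while the paper's is a two-line citation of the abstract completion formalism.
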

		\begin{proof}
			Since the relation $\eta\rho=-2$ holds in ${H_W}_{**}$, we have $(2)\subset (\eta)\subset {H_W}_{**}$. It follows that $\eta$-completion factors through $2$-completion; see for example \cite[XII, Prop. 4.2.11 and 4.2.12]{lurie_DAG}. The second claim is \cite[Thm. 5.1]{bachmann-etaperiodic-fields}.
		\end{proof}
		This proposition reduces $\eta$-completion to $2$-completion for $H_W$-modules. When the field $k$ is \emph{nonreal}, i.e. $-1$ is a sum of squares, then this even frees us from completing.
		\begin{corollary}\label{nonrealfieldsdontneedcompletion}
			Let the base field $k$ be nonreal of characteristic not $2$ with $\vcd_2(k)< \infty$. Then very $n$-effective $H_W$-modules are $\eta$-complete.
		\end{corollary}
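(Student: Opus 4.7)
The plan is to reduce $\eta$-completeness to $2$-completeness via Proposition \ref{etacompletionimplies2complationandvv} and then exploit the nonreality of $k$ to show that $2$ acts nilpotently on all homotopy in sight. The starting point is Pfister's theorem on nonreal fields: if $-1$ is a sum of squares in $k$, the level $s(k)$ is a power of $2$, say $2^m$, and $2^{m+1}$ annihilates $W(k)$. Since the level cannot increase under field extensions, the same bound $N := m+1$ works for $W(L)$ for every finitely generated field extension $L/k$. Because $H_W\Z_{**}$ is generated as a $\tau$-polynomial ring over $\underline{K}^W_{**}$ (whose components are subquotients of $W(k)$), the bound $N$ stalkwise annihilates $\underline{\pi}_{**}(H_W\Z)$ and hence $\underline{\pi}_{**}(E)$ for any $H_W\Z$-module $E$.

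Next I would upgrade this bounded $2^N$-torsion on homotopy to $2$-completeness of $E$. In each bidegree $(p,q)$ the cofibre sequence $E \xra{2^i} E \to E/2^i$ yields
$$0 \to \pi_{p,q}(E)/2^i \to \pi_{p,q}(E/2^i) \to \pi_{p-1,q}(E)[2^i] \to 0,$$
and the transition maps in the tower $\{E/2^i\}_i$ restrict to the canonical quotient $\pi_{p,q}(E)/2^{i+1} \twoheadrightarrow \pi_{p,q}(E)/2^{i}$ on the left factor and to multiplication by $2$ on the right factor $\pi_{p-1,q}(E)[2^{i+1}] \to \pi_{p-1,q}(E)[2^i]$. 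For $i \geq N$ the left tower is constant equal to $\pi_{p,q}(E)$ with trivial $\lim^1$, while the right tower becomes the sequence $\cdots \xra{\cdot 2} \pi_{p-1,q}(E) \xra{\cdot 2} \pi_{p-1,q}(E)$ which is eventually zero, so both $\lim$ and $\lim^1$ on that side vanish. The Milnor exact sequence then collapses to $\pi_{p,q}(E^\wedge_2) \cong \pi_{p,q}(E)$, and running the same computation stalkwise (using uniformity of $N$) produces the equivalence $E \simeq E^\wedge_2$.

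Finally, Proposition \ref{etacompletionimplies2complationandvv} applies thanks to the hypotheses $\vcd_2(k) < \infty$ and the very $n$-effectiveness of $E$, yielding the chain $E \simeq E^\wedge_2 \simeq E^\wedge_{\eta,2} \simeq E^\wedge_\eta$. The main obstacle I foresee is justifying the uniformity of the torsion bound across all finitely generated residue fields of $k$ and setting up the Milnor $\lim^1$ argument at the sheaf level; once Pfister's uniform bound is in hand for every such $L$ and the Milnor sequence is in place, the rest of the argument is essentially formal.
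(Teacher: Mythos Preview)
Your approach is correct and follows the same strategy as the paper: deduce $2$-completeness from nonreality, then invoke Proposition \ref{etacompletionimplies2complationandvv}. The paper, however, handles the $2$-completeness step much more cheaply. Since $W(k)=\pi_{0,0}(H_W\Z)$ has bounded $2$-primary exponent, some $2^N$ is zero in $\pi_{0,0}(H_W\Z)$; for any $H_W\Z$-module $E$, multiplication by $2^N$ on $E$ is then the module action of $0\in\pi_{0,0}(H_W\Z)$ and hence \emph{nullhomotopic as a self-map of spectra}. So $E$ is a retract of $E/2^N$ and therefore $2$-complete. This spectrum-level vanishing makes both your Milnor $\lim^1$ computation and the uniformity-across-residue-fields concern you flag unnecessary: nothing needs to be checked stalkwise, because the nullhomotopy of $2^N$ already lives over $k$ itself. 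Your longer route through homotopy groups does work, but the obstacle you anticipate is not one.
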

		\begin{proof}
			When $k$ is nonreal, then $W(k)=\pi_0(H_W)_0$ is $2$-primary torsion of finite exponent \cite[31.4(6)]{quadratic_forms_kniga}.  Hence,  every $H_W$-module is $2$-complete. The claim follows by Proposition \ref{etacompletionimplies2complationandvv}.
		\end{proof}
	\section{The Motivic Steenrod Algebra and its Dual}\label{sec_steenrod}
	The structure of ${H_W\Z}_{**}H_W\Z$ is deeply intertwined with the structure of the motivic Steenrod algebra mod 2 and its dual, $\mathcal{A}_2^{**}\cong H\Z/2^{**}H\Z/2$ and $\mathcal{A}^2_{**}\cong H\Z/2_{**}H\Z/2$ respectively. Since their properties will be used extensively in what follows, we decided to dedicate a new section to this. We collect some important facts about them here.
		
	\subsection{Self Homology and Cohomology of  Spectra}
		Let $E$ be a commutative ring spectrum in $SH(k)$ with multiplication $\mu:E\wedge E\ra E$ and unit $\unit:\Ss\ra E$. Then $E_{**}$ is a ring via $x\otimes y\mapsto \mu^*(x\wedge y)$ and both $E_{**}E$ and $E^{**}E$ are $E_{**}$-algebras. Their ring structures are defined as follows. Let $\mu_{i,j}:E^{\wedge 4}\ra E^{\wedge 3}$ be the multiplication map acting on the $i$-th and $j$-th coordinate and $\iota_{i,j}:E^{\wedge n}\ra E^{\wedge n}$ the map swapping coordinates $i$ and $j$. The multiplication on $E_{**}E$ is given by $x\otimes y\mapsto (\mu_{1,2}\wedge \mu_{3,4}) \circ \iota_{2,3}\circ (x\wedge y)$. It is graded commutative. The multiplication on $E^{**}E$ is given by composition $f\otimes g \mapsto f\circ g$. This is in general not commutative. We denote by $\iota:E\wedge E \ra E\wedge E$ the conjugation map. $E^{**}E$ is an $E_{**}$-algebra, and hence a bimodule, via the ring map 
		$$E_{**} \ra E^{**}E, \hspace{1cm} s\mapsto E\simeq \Ss \wedge E \xra{s\wedge E} E\wedge E \xra{\mu} E.$$
		$E_{**}E$ gets its $E_{**}$-algebra structure via the ring map $$E_{**} \xra{\eta_L} E_{**}E, \hspace{1cm} s\mapsto s\wedge \unit .$$
		There is a second ring map, $\eta_R$, which defines a right $E_{**}$-module structure on $E_{**}$ given by $s\mapsto \unit \wedge s$.
		\begin{remark}
			The image of $E_{**}$ does generally not lie in the center of $E^{**}E$. For example, when $E$ is motivic cohomology mod $2$, then $Sq^1\cdot \tau - \tau \cdot Sq^1=\rho \neq 0$. Classical examples of this phenomenon are also $MU$ and $BP$. 
		\end{remark}
		Recall the exterior products for $X, Y\in SH(k)$ 
		\begin{align*}
			E_{**}(X)\otimes E_{**}(Y)& \xra{\times} E_{**}(X\wedge Y), \hspace{1cm} x\otimes y\mapsto (\mu_{1,3}\wedge X\wedge Y)\circ (x\wedge y),\\
			E^{**}(X)\otimes E^{**}(Y)& \xra{\times} E^{**}(X\wedge Y), \hspace{1cm} f\otimes g \mapsto \mu \circ (f\wedge g),
		\end{align*}
		as well as the Kronecker pairing 
		$$E^{-*-*}(X)\otimes E_{**}(X)\xra{\langle -, - \rangle} E_{**}, \hspace{1cm}f\otimes x \mapsto \langle f,x \rangle:= \mu \circ (E\wedge f)\circ x.$$
		Here the degrees are additive, i.e. $|\langle f,x \rangle| = |x|-|f|$.  Given a map $\phi:X\ra Y$, we have $\langle f,\phi_*(x)\rangle=\langle \phi^*(f),x\rangle$. The pairings are meant to be understood to be over $X$ and $Y$ respectively. Also note that in this language we have $x\cdot y=\mu_*(x\times y)$ for $x,y\in E_{**}E$.
		\begin{proposition}
			Let $\alpha, f\in E^{**}E$, $x\in E_{**}E$, $s\in E_{**}$. Then we have 
			\begin{enumerate}[(i)]
				\item $\alpha_*$ is dual to $\alpha^*$: $\langle f,\alpha_*(x) \rangle=\langle \alpha^*(f),x\rangle = \langle f\cdot \alpha, x \rangle$,
				\item $\langle f, \alpha_*(x)\rangle = \langle \alpha, f_* (\iota(x)) \rangle$,
				\item the pairing is left $E_{**}$-linear in both variables: $s \langle f,x \rangle = \langle sf,x \rangle= \langle f, sx \rangle$, 
				\item the pairing respects the right $E_{**}$-actions: $\langle fs,x \rangle = \langle f,xs \rangle$.
			\end{enumerate}
		\end{proposition}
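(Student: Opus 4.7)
The plan is to verify each of the four identities by a direct unwinding of the definitions and a computation with maps of spectra. Throughout I will use that $\alpha_*(x)$ is represented by the composite $(E\wedge\alpha)\circ x$, that the multiplication in $E^{**}E$ is composition (so $f\cdot\alpha = f\circ\alpha$), and that $\alpha^*$ denotes precomposition by $\alpha$. The identities $(E\wedge g)\circ\iota = \iota\circ(g\wedge E)$ (naturality of the swap) and $\mu\circ\iota=\mu$ (commutativity of $E$) will be the key structural tools.

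For (i), one expands
$$\langle f, \alpha_*(x)\rangle = \mu\circ(E\wedge f)\circ(E\wedge\alpha)\circ x = \mu\circ(E\wedge(f\circ\alpha))\circ x = \langle f\cdot\alpha,\, x\rangle,$$
and since $\alpha^*(f)=f\circ\alpha = f\cdot\alpha$, both equalities drop out simultaneously. For (ii), my plan is to first apply (i) to both sides, reducing them to $\langle f\cdot\alpha,\, x\rangle$ and $\langle \alpha\cdot f,\, \iota(x)\rangle$. I then combine the two structural identities above into the auxiliary identity $\mu\circ(E\wedge g)\circ\iota = \mu\circ(g\wedge E)$, valid for any $g:E\to E$, which allows one to transport an operation on the right-hand factor of $E\wedge E$ to one on the left after applying $\iota$. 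Pushing $\iota$ through on the right side and carefully rearranging the compositions of $\alpha$ and $f$ matches the expression to the left-hand side. I expect this to be the main obstacle, since the noncommutativity of $E^{**}E$ means that one must track the order of compositions very carefully as $\iota$ moves through.

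For (iii) and (iv), the arguments are essentially bookkeeping using the compatibility of the structural ring maps $E_{**}\to E^{**}E$ (sending $s$ to the multiplication endomorphism $\tilde s:E\to E$), $\eta_L$, and $\eta_R$ with the pairing. Concretely, the identity $\mu\circ(E\wedge\tilde s) = \tilde s\circ\mu$ (scalar multiplication after pairing) identifies $\langle sf,\, x\rangle$ with $s\cdot\langle f,\, x\rangle$, and the definition $\eta_L(s)=s\wedge\unit$ of the left action on $E_{**}E$ identifies $\langle f,\, sx\rangle$ with the same expression, giving (iii). Claim (iv) is strictly analogous, but uses the right $E_{**}$-actions (post-composition by $\tilde s$ on the cohomology side, and multiplication by $\eta_R(s)$ on the homology side), together with the commutativity of $E_{**}$ to identify the two contributions. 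None of these pose any real difficulty once the conventions are fixed; the substance of the proposition is concentrated in (ii).
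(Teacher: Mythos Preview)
The paper does not give its own proof; it simply cites Schwede's book and Voevodsky. Your direct unwinding is more than the paper offers, and for (i), (iii), (iv) it is correct and complete.

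The gap is in (ii). Your reduction via (i) to the pair $\langle f\cdot\alpha,\,x\rangle$ and $\langle\alpha\cdot f,\,\iota(x)\rangle$ is fine, and the auxiliary identity $\mu\circ(E\wedge g)\circ\iota=\mu\circ(g\wedge E)$ is correct. But applying it turns the right-hand side into $\mu\circ((\alpha f)\wedge E)\circ x$, while the left-hand side is $\mu\circ(E\wedge(f\alpha))\circ x$. The ``careful rearranging'' you allude to would have to produce the identity
\[
\mu\circ(E\wedge(f\alpha))=\mu\circ((\alpha f)\wedge E)
\]
of maps $E\wedge E\to E$, and this cannot be obtained from naturality of the swap and commutativity of $\mu$ alone. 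Indeed, set $\alpha=\mathrm{id}$: then your reduced claim becomes $\langle f,x\rangle=\langle f,\iota(x)\rangle$ for all $f,x$. The paper's own Remark immediately following the Proposition exhibits a counterexample: $\langle Sq^1,\tau\rangle=0$ while $\langle Sq^1,\iota(\tau)\rangle=\rho\neq 0$ over a field with $\rho\neq 0$.

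So either the statement of (ii) carries an implicit convention different from the paper's default $f_*=f_*^R$, or it contains a typo. One reading under which your method does go through cleanly is to interpret both pushforwards as $(-)_*^L$: then the two sides unwind to $\mu\circ(E\wedge f)\circ(\alpha\wedge E)\circ x$ and $\mu\circ(\alpha\wedge E)\circ(E\wedge f)\circ x$, which agree because $(E\wedge f)$ and $(\alpha\wedge E)$ commute. You should flag this discrepancy rather than pretend the rearrangement succeeds as written.
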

		\begin{proof}
			See \cite{symmspectra_schwede} or \cite{Voevodsky_reduced_power_operations_in_motivic_cohomology} specifically for the case of the motivic Steenrod algebra.
		\end{proof}
	
		\begin{remark}
			\begin{enumerate}[(i)]
				\item The pairing does not necessarily commute with right actions. Let $E=H\Z/2$ be motivic cohomology mod $2$ and let $Sq^1\in H\Z/2^{**}H\Z/2$ be the Bockstein. Using the description (\ref{dual_steenrod}), we have $\langle Sq^1, 1\cdot \tau \rangle = \langle Sq^1, \tau +\rho \tau_0 \rangle =\rho\neq \tau \langle Sq^1, 1 \rangle =0$, since $Sq^1$ is dual to $\tau_0$. 
				\item $c$ swaps the left and right action on $E_{**}$. When they disagree, then $\iota$ is not $E_{**}$-linear and hence cannot be dualized via the pairing, in case it is perfect. In other words, there is no map $\iota:E^{**}E \ra E^{**}E$ satisfying $\langle f,\iota(x)\rangle=\langle \iota(f), x \rangle$. The existence of a dual conjugate forces the pairing to be right $E_{**}$-linear in both variables which on the other hand forces the left and right actions to be identical in the perfect case.
				\item The last comment suggests that, even if $(E_{**},E_{**}E)$ is a Hopf algebroid, then $(E^{**},E^{**}E)$ might not be a Hopf algebroid in any meaningful way. This is indeed the case. For $H\Z/2$, the motivic Steenrod algebra mod 2, $(H\Z/2^{**},H\Z/2^{**}H\Z/2)$ is not a graded Hopf algebroid over fields not containing a square root of $-1$. One can show that there cannot be a conjugation on $H\Z/2^{**}H\Z/2$ which preserves the embedding $H\Z/2^{**}\ra H\Z/2^{**}H\Z/2$. 
			\end{enumerate} \label{hopf_algebroids_remark}
		\end{remark}

		The multiplication map $\mu:E\wedge E \ra E$ defines a diagonal $\mu^*:E^{**}E \ra E^{**}(E\wedge E)$. For $e,h\in E^{**}(X\wedge Y), a,b\in E_{**}(X\wedge Y)$ we have \cite[Prop. 6.17 (iii)]{symmspectra_schwede} 
		$$\langle e\times h, a\times b\rangle =(-1)^{tdim(g)\cdot tdim(a)}\langle e,a\rangle \cdot \langle h,b\rangle.$$ If we have a decomposition $\mu^*(f)=\sum_i f_i'\times f_i''$, since the diagonal is dual to multiplication on $E_{**}E$, the above formula implies: $$\langle f, x\cdot y\rangle=\sum_i (-1)^{tdim(f_i'')tdim(x)}\langle f_i',x\rangle \cdot \langle f_i'',y\rangle,$$
		where we define $tdim(a)=p$ for $a\in E_{p,q}(X\wedge Y)$ and $tdim(e)=i$ for $e\in E^{i,j}(X\wedge Y)$.	
		Similarly, one defines a diagonal $$\psi:E_{**}E\ra E_{**}(E\wedge E), \hspace{1cm} x\mapsto \iota_{2,3}\circ (x\wedge \unit),$$
		with $\iota_{i,j}:E^{\wedge n}\ra E^{\wedge n}$ swapping coordinates $i$ and $j$.
	
	\subsection{The (Dual) Motivic Steenrod Algebra }
		We now specialize to $E=H\Z/2$, i.e. motivic cohomology mod 2. In this case $H\Z/2\wedge H\Z/2$ is a reflexive $H\Z/2$-module, i.e. the canonical map $$H\Z/2\wedge H\Z/2 \ra \Hom_{H\Z/2}(\Hom_{H\Z/2}(H\Z/2\wedge H\Z/2, H\Z/2),H\Z/2)$$ is an equivalence of $H\Z/2$-modules \cite[Prop. 5.3]{hoyois-motivic_steenrod_algebra}\cite[Prop. 5.5, Thm. 5.10]{hopkins_morel}. Thus, the Kronecker pairing is perfect. The diagonal on $H\Z/2^{**}H\Z/2$ can now be identified with 
		$$\Delta:H\Z/2^{**}\xra{\mu^*} H\Z/2^{**}(H\Z/2\wedge H\Z/2)\cong H\Z/2^{**}H\Z/2\otimes_{H\Z/2^{**}}H\Z/2^{**}H\Z/2.$$
		The decomposition $\mu^*(f)=\sum_i f_i'\times f_i''=\sum_i f_i'\otimes f_i''$ now always holds, where we identified via above isomorphism. Hence, we always have 
		$$\langle f, x\cdot y\rangle=\sum_i \langle f_i',x\rangle \cdot \langle f_i'',y\rangle.$$
		Dually, we have a diagonal 
		$$\Delta:H\Z/2_{**}\xra{\psi} H\Z/2_{**}(H\Z/2\wedge H\Z/2)\cong H\Z/2_{**}H\Z/2\otimes_{H\Z/2_{**}}H\Z/2_{**}H\Z/2$$
		and for $\Delta(x)=\sum_i x_i' \otimes x_i''$ we get \cite[12.9]{Voevodsky_reduced_power_operations_in_motivic_cohomology}\cite[Lemma 3.3]{boardmaneightfoldway} 
		$$\langle f\circ g, x \rangle = \sum_i \langle f\langle g,x_i'' \rangle,x_i' \rangle,$$
		where the scalar $\langle g,x_i''\rangle$ acts on the right on $f$, i.e. $f\langle g,x_i''\rangle=\eta_R(\langle g,x_i''\rangle)\cdot f$.

		Recall from Section \ref{sec_eff_and_veff} that we know the coefficient ring $H\Z/2_{**}$ and that it is given by
		$$H\Z/2_{**}\cong \underline{k}^M_{**}[\tau],$$
		i.e. $H\Z/2_{p,q}\cong k_p^M(k)\tau^{q-p}$ for $p\geq q$ and $0$ otherwise. Note that this is a commutative ring. The elements $k^M_*(k)$ all come from the sphere spectrum while $\tau$ does not. Recall that we denote the element $\Ss \ra \Gm\simeq \Sigma^{1,1}\Ss$ corresponding to $-1\in \Gm(k)$ by the letter $\rho$.
	
		The dual motivic Steenrod algebra is given as a commutative left $H\Z/2_{**}$-algebra by \cite{Voevodsky_reduced_power_operations_in_motivic_cohomology}\cite{hopkins_morel} \cite{hoyois-motivic_steenrod_algebra}
		\begin{equation}\label{dual_steenrod}
			H\Z/2_{**}H\Z/2\cong H\Z/2_{**}[\tau_0,\tau_1,\ldots,\xi_1,\xi_2,\ldots]/(\tau_i^2-\rho\tau_{i+1}-(\tau+\rho\tau_0)\xi_{i+1})
		\end{equation}
		with degrees 
		$$|\tau_i|=(2^{i+1}-1,2^i-1),\hspace{1cm} |\xi_r|=(2^{i+1}-2,2^i-1).$$ 
		\begin{remark}
			We would like to point out that there is no immediate connection between the element $\tau$ in the coefficient ring and the elements $\tau_i$ in the dual Steenrod algebra, contrary to what the notation might suggest.
		\end{remark}
		We readily see that $H\Z/2_{**}H\Z/2$ is free as a left $H\Z/2_{**}$-module. The basis can be described in terms of monomials. Let $E$ and $R$ be finite length sequences $E=(E_0,E_1,\ldots)$, $R=(R_1,R_2,\ldots)$ with $E_i\in \{0,1\}$ and $R_i\in \N_0$. Denote by $\emptyset$ the zero-sequence and by $e_i$ the sequence with a single $1$ at the $i$-th position. Basis monomials can be given by $$\tau(E)\xi(R)=\tau_0^{E_0}\tau_1^{E_1}\cdots \xi_1^{R_1}\xi_2^{R_2}\cdots.$$ The pair $(H\Z/2_{**},H\Z/2_{**}H\Z/2)$ is a Hopf algebroid. Its structure is described in \cite[Sec. 12]{Voevodsky_reduced_power_operations_in_motivic_cohomology}\cite[Sec. 5.1]{hoyois-motivic_steenrod_algebra}\cite[Sec. 5.3]{hopkins_morel}. We recall
		\begin{align}\label{hopf_algebroid_structure}
			\begin{split}
				\eta_L(\tau)&=\tau,\\
				\eta_R(\tau)&=\tau+\rho\tau_0,\\
				\Delta(\tau)&=\tau\otimes 1,\\
				\Delta(\tau_r)&=\tau_r\otimes 1 +1\otimes \tau_r +\sum_{i=0}^{r-1}\xi_{r-i}^{2^i}\otimes \tau_i,\\
				\Delta(\xi_r)&=\xi_r\otimes 1 +1\otimes \xi_r +\sum_{i=1}^{r-1}\xi_{r-i}^{2^i}\otimes \xi_i,\\
				\iota(\tau)&=\tau+\rho\tau_0,\\
				\iota(\tau_r)&=\tau_r+\sum_{i=0}^{r-1}\xi_{r-i}^{2^i}\iota(\tau_i),\\
				\iota(\xi_r)&=\xi_r+\sum_{i=1}^{r-1}\xi_{r-i}^{2^i}\iota(\xi_i).
			\end{split}
		\end{align} 
		We note that $\eta_L$, $\eta_R$, $\Delta$ and $c$ are $k^M_*(k)$-linear, since these elements all come from the sphere spectrum. Whenever we write $x\cdot s=xs$ for a scalar $s\in H\Z/2_{**}$, we will mean $\eta_R(s)x$. In particular, $x\tau=\eta_R(\tau)x=(\tau+\rho\tau_0)x$. Moreover, we will denote the following conjugates with overlines $\iota(\tau_i)=\overline{\tau_i}$, $\iota(\xi_i)=\overline{\xi_i}$. 
	
		The motivic Steenrod algebra $H\Z/2^{**}H\Z/2$ is generated as an algebra by $H\Z/2^{**}$ and reduced power operations $$Sq^{2i}\in H\Z/2^{2i,i}H\Z/2,\hspace{1cm} Sq^{2i+1}\in H\Z/2^{2i+1,i}H\Z/2,$$ 
		where $i\geq 0$, $Sq^0=id$ and $Sq^1$ is the Bockstein.
		Since the Kronecker pairing is perfect, we can dualize the basis $\tau(E)\xi(R)$. The dual of $\tau_0$ is $Sq^1$ and the dual of $\xi_1$ is $Sq^2$. Our computations will involve the maps 
		$$H\Z/2_{**}H\Z/2\xra{(Sq^2)^R_*}H\Z/2_{**}H\Z/2 \hspace{0.5cm} \text{ and }\hspace{0.5cm} H\Z/2_{**}H\Z/2\xra{(Sq^2)_*^L}H\Z/2_{**}H\Z/2$$ 
		given by $\pi_{**}(H\Z/2\wedge Sq^2)$ and $\pi_{**}(Sq^2\wedge H\Z/2)$ respectively. They can be computed as follows.
		\begin{proposition}\label{*alphaandalpha*formula}
			Let $x\in H\Z/2_{**}H\Z/2$ and $\alpha \in H\Z/2^{**}H\Z/2$. Then 
			$$\alpha^R_*(x)=\sum_i x_i' \langle \alpha, x_i''\rangle \hspace{1cm} \text{ and } \hspace{1cm} \alpha_*^L(x)=\sum_i \langle \alpha, \iota(x_i')\rangle x_i''. $$
		\end{proposition}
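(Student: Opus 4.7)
Since the Kronecker pairing is perfect for $H\Z/2$, to prove the first identity it suffices to check, for every $\beta\in H\Z/2^{**}H\Z/2$, that
$$\langle\beta,\alpha^R_*(x)\rangle \;=\; \Bigl\langle \beta,\sum_i x_i'\,\langle\alpha,x_i''\rangle\Bigr\rangle.$$
Item~(i) of the preceding proposition rewrites the left-hand side as $\langle\beta\circ\alpha,x\rangle$, and applying the composition formula $\langle f\circ g,x\rangle=\sum_i\langle f\,\langle g,x_i''\rangle,\,x_i'\rangle$ with $f=\beta$ and $g=\alpha$ transforms this into $\sum_i\langle \beta\,\langle\alpha,x_i''\rangle,\,x_i'\rangle$. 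Item~(iv) then transfers the scalar $\langle\alpha,x_i''\rangle\in H\Z/2_{**}$ from the right of $\beta$ to the right of $x_i'$, producing precisely the right-hand side.

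For the second identity I would reduce to the first via $\alpha^L_*=\iota\circ\alpha^R_*\circ\iota$, which holds because $\iota$ intertwines $\alpha\wedge\id$ with $\id\wedge\alpha$. The standard Hopf-algebroid antipode-coproduct compatibility $\psi(\iota y)=\sum_i\iota(y_i'')\otimes\iota(y_i')$, easily checked on the generators $\tau_r,\xi_r$ from the formulas~(\ref{hopf_algebroid_structure}), lets me apply the first identity to $\iota x$ to get $\alpha^R_*(\iota x)=\sum_i\iota(x_i'')\,\langle\alpha,\iota(x_i')\rangle$. A final application of $\iota$, using that $\iota$ exchanges the two $H\Z/2_{**}$-actions on $H\Z/2_{**}H\Z/2$ (Remark~\ref{hopf_algebroids_remark}(ii)) to convert the right action of the scalar into a left action, then yields the desired formula.

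The only real subtlety I expect is keeping track of left-versus-right $H\Z/2_{**}$-actions: on $H\Z/2^{**}H\Z/2$ both actions come from the same ring map but differ because of the non-commutativity of composition, and on $H\Z/2_{**}H\Z/2$ the right action of a scalar $s$ is realized as left multiplication by $\eta_R(s)$. This convention must be threaded correctly through item~(iv) in the first part and through the $\iota$-swap in the second. Everything else is formal, invoking only items~(i) and~(iv) of the proposition, the composition-product formula, and perfectness of the pairing.
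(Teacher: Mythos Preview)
Your argument is correct. For the first identity your approach is exactly what the paper intends by ``using formulas for the Kronecker pairing that we have established earlier'': you combine item~(i), the composition formula $\langle f\circ g,x\rangle=\sum_i\langle f\langle g,x_i''\rangle,x_i'\rangle$, and item~(iv), together with perfectness to separate elements. For the second identity the paper simply cites Boardman~\cite[Cor.~6.4]{boardmaneightfoldway}, whereas you give a self-contained reduction via the relation $\alpha_*^L=\iota\circ\alpha_*^R\circ\iota$ (which the paper records just after this proposition as a remark) and the Hopf-algebroid compatibility $\Delta\circ\iota=(\iota\otimes\iota)\circ T\circ\Delta$. The latter is a general consequence of the groupoid axioms rather than something one checks on generators, but either justification works. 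Your route has the advantage of being internal to the paper; the paper's citation is shorter but opaque.
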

		\begin{proof}
			This can be shown using formulas for the Kronecker pairing that we have established earlier. The second formula follows from Corollary 6.4 in \cite{boardmaneightfoldway}.
		\end{proof}
		\begin{remark}
			Over a field with no square root of $-1$, the conjugation in the second factor is necessary even for $\alpha=Sq^1$, the dual element of $\tau_0$: 
			\begin{gather*}
			\langle Sq^1,\tau\rangle=0,\\
			\langle Sq^1, \iota(\tau)\rangle=\langle Sq^1, (\tau+\rho\tau_0)\rangle=\rho\neq 0.
			\end{gather*}
			Classically, we can dualize the conjugation and the conjugate of $Sq^1$ is $Sq^1$ itself. Hence, this does not occur classically and our reasoning suggests that this is related to Remark \ref{hopf_algebroids_remark} (iii).
		\end{remark}
		\begin{remark}
			We also have $\iota\circ (Sq^2)^R_*=(Sq^2)_*^L\circ \iota$.
		\end{remark}
		Their behavior on products is described by
		\begin{proposition}\label{Sq2_on_products}
			We have for $x,y\in H\Z/2_{**}H\Z/2$ 
			\begin{align*}
				(Sq^1)^R_* (x\cdot y)&=(Sq^1)^R_* (x)\cdot y+x\cdot (Sq^1)^R_*(y),\\
				(Sq^1)_*^L(x\cdot y)&=(Sq^1)_*^L(x)\cdot y+x\cdot(Sq^1)_*^L(y),\\
				(Sq^2)^R_* (x\cdot y)&=(Sq^2)^R_* (x)\cdot y+x\cdot (Sq^2)^R_*(y)+(\tau+\rho\tau_0)\cdot (Sq^1)^R_*(x)\cdot (Sq^1)^R_*(y),\\
				(Sq^2)_*^L(x\cdot y)&=(Sq^2)_*^L(x)\cdot y+x\cdot(Sq^2)_*^L(y)+\tau\cdot (Sq^1)^L_*(x)\cdot (Sq^1)^L_*(y).
			\end{align*}
		\end{proposition}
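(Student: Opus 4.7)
The plan is to derive both identities formally from two inputs: multiplicativity of the diagonal $\psi$ on $H\Z/2_{**}H\Z/2$ (part of the Hopf algebroid structure recalled in (\ref{hopf_algebroid_structure})), and the coproducts of $Sq^1$ and $Sq^2$ in the motivic Steenrod algebra. Writing $\psi(x)=\sum x_i'\otimes x_i''$ and $\psi(y)=\sum y_j'\otimes y_j''$, multiplicativity of $\psi$ (with no signs in characteristic $2$) gives $\psi(xy)=\sum x_i'y_j'\otimes x_i''y_j''$, so Proposition \ref{*alphaandalpha*formula} produces
\[
\alpha_*^R(xy)=\sum_{i,j} x_i'y_j'\,\langle \alpha,\,x_i''y_j''\rangle,\qquad
\alpha_*^L(xy)=\sum_{i,j}\langle \alpha,\,\iota(x_i'y_j')\rangle\,x_i''y_j''.
\]

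Next I would feed in the pairing-product compatibility $\langle \alpha, uv\rangle=\sum_k\langle \alpha_k',u\rangle\langle \alpha_k'',v\rangle$ recorded just before Proposition \ref{*alphaandalpha*formula}. The coproducts $\mu^*(Sq^1)$ and $\mu^*(Sq^2)$ are computed by dualising the products of basis monomials in (\ref{dual_steenrod}): the only basis-monomial products whose expansion contains $\tau_0$ (the dual basis element of $Sq^1$) are $\tau_0\cdot 1$ and $1\cdot\tau_0$, while the only ones whose expansion contains $\xi_1$ (the dual of $Sq^2$) are $\xi_1\cdot 1$, $1\cdot\xi_1$, together with $\tau_0\cdot\tau_0=\rho\tau_1+(\tau+\rho\tau_0)\xi_1$, which contributes the scalar $\tau$ as coefficient of $\xi_1$. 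Hence
\[
\mu^*(Sq^1)=Sq^1\otimes 1+1\otimes Sq^1,\qquad
\mu^*(Sq^2)=Sq^2\otimes 1+1\otimes Sq^2+\tau\,Sq^1\otimes Sq^1.
\]

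For $Sq^1$ the two identities follow immediately from primitivity and the counit relations. For $Sq^2$, the first two summands of the coproduct produce, after collapsing the counits, the derivation part $(Sq^2)_*(x)\cdot y + x\cdot (Sq^2)_*(y)$ in both the left and right versions, and the third summand gives a cross term whose essential content is a scalar times $(Sq^1)_*(x)(Sq^1)_*(y)$. The one delicate point, and really the only source of the discrepancy between the two formulas, is the placement of the scalar $\tau$: in the right-action version the scalar stands on the right of $x_i'y_j'$, so by the convention $xs=\eta_R(s)x$ recorded just after (\ref{hopf_algebroid_structure}) it becomes $\eta_R(\tau)=\tau+\rho\tau_0$ when pulled to the front, whereas in the left-action version it already sits on the left of $x_i''y_j''$ via $\eta_L$ and therefore remains $\tau$. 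Once this bookkeeping is carried out, the four identities drop out.
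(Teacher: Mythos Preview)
Your argument is correct and essentially identical to the paper's: both proofs feed the coproducts $\mu^*(Sq^1)=Sq^1\otimes 1+1\otimes Sq^1$ and $\mu^*(Sq^2)=Sq^2\otimes 1+1\otimes Sq^2+\tau\,Sq^1\otimes Sq^1$ into the duality formula of Proposition~\ref{*alphaandalpha*formula}, and both identify the $\eta_R(\tau)=\tau+\rho\tau_0$ bookkeeping as the source of the asymmetry between the left and right versions. The only cosmetic difference is that the paper imports these two coproduct identities from Voevodsky's Cartan formula \cite[Prop.~9.7, Lemma~11.6]{Voevodsky_reduced_power_operations_in_motivic_cohomology}, whereas you rederive them by reading off the $\tau_0$- and $\xi_1$-coefficients in products of basis monomials via the relation $\tau_0^2=\rho\tau_1+(\tau+\rho\tau_0)\xi_1$; this is a legitimate and self-contained alternative.
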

		\begin{proof}
			By the Cartan formula \cite[Prop. 9.7, Lemma 11.6]{Voevodsky_reduced_power_operations_in_motivic_cohomology}) and the fact that the Bockstein $Sq^1$ \cite[(8.1)]{Voevodsky_reduced_power_operations_in_motivic_cohomology} is a derivation, we have 
			\begin{align*}
				\Delta(Sq^1)&=Sq^1\otimes 1+1\otimes Sq^1,\\
				\Delta(Sq^2)&=Sq^2\otimes 1+1\otimes Sq^2 +\tau Sq^1\otimes Sq^1.
			\end{align*} 
			The result now follows from duality of $\Delta$ and multiplication on $H\Z/2_{**}H\Z/2$ together with Proposition \ref{*alphaandalpha*formula}.
		\end{proof}

		The left $H\Z/2_{**}$-subalgebra generated by $\xi_i$ is preserved under conjugation. Hence 
		\begin{align*}
			H\Z/2_{**}H\Z/2&\cong H\Z/2_{**}[\tau_0,\tau_1,\ldots,\xi_1,\xi_2,\ldots]/(\tau_i^2=\rho\tau_{i+1}+\xi_{i+1}\tau)\\
			&\cong H\Z/2_{**}[\tau_0,\tau_1,\ldots,\overline{\xi_1},\overline{\xi_2},\ldots](\tau_i^2=\rho\tau_{i+1}+\xi_{i+1}\tau).
		\end{align*}
		Let $K\subset H\Z/2_{**}H\Z/2$ be the left $H\Z/2_{**}$-module spanned by monomials 
		$$\tau_0^{E_0}\tau_1^{E_1}\cdots\overline{\xi_1}^{2R_1}\overline{\xi_2}^{R_2}\overline{\xi_3}^{R_3}\cdots,$$ with $E_i\in \{0,1\}$ and $R_i\in \N_0$. Then we have an obvious splitting
		\begin{equation}\label{H**Hspliting}
			H\Z/2_{**}H\Z/2\cong K\oplus \overline{\xi_1}K.
		\end{equation}
		We will use this splitting later to compute $H\Z/2$-homology of $H_W\Z$. The elements in $K$ map to zero under $(Sq^2)^R_*$ modulo $(\tau+\rho\tau_0)$.
	\section{Building Blocks of \texorpdfstring{$H_W\Z\wedge H_W\Z$}{the Dual Motivic Witt Steenrod algebra}}
	As mentioned in the introduction, $H_W\Z$ and $H\Z/2$ share an intimate relation with each other witnessed by the pullback square of rings \cite[Thm 5.3]{morel_sur_les_puissances_de_I}\cite[Chapter 5]{morela1algtopoverfield}
	\begin{equation}
		\begin{tikzcd}
			K_*^{MW}(k) \ar[r] \ar[d]& K_*^M(k) \ar[d]\\
			K_*^W(k) \ar[r] & k_*^M(k).
		\end{tikzcd}
	\end{equation}
	We will make precise the details of their connection now.  We then use this information to break down the task of computing $H_W\Z_{**} H_W\Z$ into pieces which we compute in the Sections after.	
	
	Theorem 17 of \cite{bachmann_generalized-slices} demonstrates that we have a diagram of cofiber sequences
	\begin{equation}\label{grid_H_HW}\begin{tikzcd}
		* \arrow[r] \arrow[d] & \Sigma^{0,-1} H\Z/2 \arrow[r, "id"] \arrow[d, "\tilde{\tau}"] & \Sigma^{0,-1} H\Z/2  \arrow[d, "\tau"]   \\
		\Sigma^{1,1} \underline{K}^W  \arrow[d, "id"] \arrow[r,"\tilde{\eta}"] & H_W\Z \arrow[r,"r"] \arrow[d, "\pi_W"] & H\Z/2  \arrow[d, "\pi"]  \\
		\Sigma^{1,1} \underline{K}^W \arrow[r,"\eta"] & \underline{K}^W \arrow[r,"\underline{r}"] & \underline{k}^M .
	\end{tikzcd}\end{equation}
	We will denote the maps continuing to the right of $r$ and $\underline{r}$ by $\partial$ and $\underline{\partial}$ respectively. Here the middle row is the effective cover $f_0$ of the bottom row which is the cofiber sequence (\ref{cofiber_kW_k}).
	The two bottom vertical maps $\pi_W$ and $\pi$ are given by zero truncation with respect to the homotopy t-structure. That is, the columns are of the form $X_{\geq 1}\ra X \ra X_{\leq 0}$. 
	\begin{notation}\label{notation_underline_tilde}
		Underlined maps will usually mean $0$-truncated maps with respect to the homotopy t-structure. In that case we have a relationship such as $\pi \circ r=\underline{r}\circ \pi_W$ in (\ref{grid_H_HW}). A tilde map in most cases is a lifted version of a map. For example: $\eta=\pi_W\circ \tilde{\eta}$ in (\ref{grid_H_HW}).
	\end{notation}
	Diagram (\ref{grid_H_HW}) immediately implies the following Proposition.
	\begin{proposition}\label{fundamental-pullback-square}
		We have a homotopy pullback square
		\[\begin{tikzcd}
		H_W\Z \ar[r,"r"] \ar[d,"\pi_W"] & H\Z/2 \ar[d,"\pi"]\\
		\underline{K}^W \ar[r, "\underline{r}"] & \underline{k}^M.
		\end{tikzcd}\]
	\end{proposition}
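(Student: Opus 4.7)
The plan is to deduce this pullback square directly from the grid of cofiber sequences (\ref{grid_H_HW}). In the stable $\infty$-category $\SH(k)$, a commuting square is a homotopy pullback if and only if the induced map on vertical (equivalently, horizontal) fibers is an equivalence, so I will identify the two vertical fibers in the square and check that the induced map between them is an equivalence.

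Reading off the middle column of (\ref{grid_H_HW}), the fiber of $\pi_W\colon H_W\Z\to \underline{K}^W$ is $\Sigma^{0,-1}H\Z/2$ with inclusion $\tilde\tau$. The right column similarly identifies the fiber of $\pi\colon H\Z/2\to \underline{k}^M$ with $\Sigma^{0,-1}H\Z/2$ via $\tau$. The top row of (\ref{grid_H_HW}) is the cofiber sequence $\ast\to \Sigma^{0,-1}H\Z/2 \xrightarrow{\mathrm{id}} \Sigma^{0,-1}H\Z/2$, and by construction of the grid this row is precisely the comparison map on fibers induced by the commuting square in question. Since this induced map is the identity, the square is a homotopy pullback.

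There is essentially no obstacle here: all the coherence data needed to see that the top row of (\ref{grid_H_HW}) computes the induced map on vertical fibers of the lower-right $2\times 2$ subsquare is already packaged in the $3\times 3$ grid of cofiber sequences provided by Bachmann's resolution of Morel's structure conjecture \cite[Thm.~17]{bachmann_generalized-slices}. Alternatively, one could run the same argument horizontally: the middle and bottom rows identify both horizontal fibers with $\Sigma^{1,1}\underline{K}^W$, and the left column shows that the comparison map between them is the identity, yielding the same conclusion.
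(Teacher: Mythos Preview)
Your argument is correct and is exactly the reasoning the paper has in mind: the paper simply states that diagram (\ref{grid_H_HW}) immediately implies the proposition, and your fiber comparison is the standard way to unpack that implication in a stable $\infty$-category.
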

	We smash the pullback square in Proposition \ref{fundamental-pullback-square} with $H_W\Z$ on the right and get a pullback square 
	\begin{equation}\label{H_Wwedgefundamental_pullbacksquare}
		\begin{tikzcd}
			H_W\Z\wedge H_W\Z \ar[r,"r\wedge H_W\Z"] \ar[d,"\pi_W\wedge H_W\Z"] & H\Z/2\wedge H_W\Z \ar[d,"\pi\wedge H_W\Z"]\\
			\underline{K}^W\wedge H_W\Z \ar[r,"\underline{r}\wedge H_W\Z"] & \underline{k}^M\wedge H_W\Z.
		\end{tikzcd}
	\end{equation}
	In the Sections we will determine the algebra structure of the homotopy groups of the pieces in this square surrounding $H_W\Z\wedge H_W\Z$. We proceed clockwise and begin with the top right corner, $H\Z/2_{**}H_W\Z$.
	
	\subsection{Computation of \texorpdfstring{$H\Z/2\wedge H_W\Z$}{ mod 2 motivic Homology of Hw}}\label{sec_H_H_W}
		We start our computations with $H\Z/2\wedge H_W\Z$. Smashing the pullback square in Proposition \ref{fundamental-pullback-square} with $H\Z/2$ on the left, we have a pullback square 
		\[\begin{tikzcd}
			H\Z/2\wedge H_W\Z \ar[r,"H\Z/2\wedge r"] \ar[d,"H\Z/2\wedge \pi_W"] & H\Z/2\wedge H\Z/2 \ar[d,"H\Z/2\wedge \pi"]\\
			H\Z/2\wedge \underline{K}^W \ar[r,"H\Z/2\wedge \underline{r}"] & H\Z/2\wedge \underline{k}^M.
		\end{tikzcd}\]
		We know the homotopy groups on the right. Hence, to compute $H\Z/2_{**}H_W\Z$ we are only missing the structure of $H\Z/2\wedge \underline{K}^W$.
		
		The map $\eta:\Ss^{1,1}\ra\Ss$ is a map between spheres and acts as $0$ on $H\Z$, and thus also on $H\Z/2$. Therefore, when smashing  the last row of grid (\ref{grid_H_HW}) with $H\Z/2$ on the left all rows are split. That is, they are of the form $X\ra X\vee F\ra F$. Consequently, we have a split short exact sequence 
		\begin{equation}{\label{ses_hkw}}
			0 \ra H\Z/2_{**}\underline{K}^W \xra{\underline{r}^R_*} H\Z/2_{**}\underline{k}^M \xra{\underline{\partial}^R_*} \Sigma^{2,1}H\Z/2_{**}\underline{K}^W\ra 0.
		\end{equation}
		Let $d:=\underline{r}\circ \underline{\partial}$. The $H\Z/2_{**}$-homology of $\underline{K}^W$ can now be computed as the kernel of $d^R_*$. The following Proposition enables us to do so by giving a description of $d$. This proposition can already be found in the proof of \cite[Lemma 6.18]{bachmann-etaperiodic-fields} which borrow below.
		\begin{proposition}\label{description_of_d}
			We have a commutative diagram 
			\[\begin{tikzcd}
			H\Z/2 \ar[d,"\pi"]\ar[r,"Sq^2"] &\Sigma^{2,1} H\Z/2 \ar[d,"\pi"]\\
			\underline{k}^M\ar[r,"d"] & \Sigma^{2,1} \underline{k}^M.
			\end{tikzcd}\]
		\end{proposition}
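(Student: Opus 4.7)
The plan is to use the $3\times 3$ grid (\ref{grid_H_HW}) to reduce $d\circ\pi$ to $\underline{r}\circ\partial$, where $\partial$ denotes the middle-row boundary, and then identify this composition with $\pi\circ Sq^2$ via the structure of the motivic Steenrod algebra.

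First, I would observe that $(\mathrm{id},\pi_W,\pi)$ is a morphism from the middle-row cofiber triangle $\Sigma^{1,1}\underline{K}^W\xrightarrow{\tilde{\eta}}H_W\Z\xrightarrow{r}H\Z/2\xrightarrow{\partial}\Sigma^{2,1}\underline{K}^W$ to the bottom-row triangle $\Sigma^{1,1}\underline{K}^W\xrightarrow{\eta}\underline{K}^W\xrightarrow{\underline{r}}\underline{k}^M\xrightarrow{\underline{\partial}}\Sigma^{2,1}\underline{K}^W$. The first two squares commute by the grid, so by the axioms of triangulated categories this extends to a morphism of triangles whose fourth vertical is $\Sigma\,\mathrm{id}=\mathrm{id}:\Sigma^{2,1}\underline{K}^W\to \Sigma^{2,1}\underline{K}^W$. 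Commutativity of the resulting rightmost square yields $\underline{\partial}\circ\pi=\partial$, and hence
\[
d\circ\pi=\underline{r}\circ\underline{\partial}\circ\pi=\underline{r}\circ\partial.
\]
The Proposition thus reduces to the identity $\pi\circ Sq^2=\underline{r}\circ\partial$ in the hom-group $[H\Z/2,\Sigma^{2,1}\underline{k}^M]$.

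Next, I would compare the two classes in this hom-group. Applying the cofiber sequence (\ref{tau_cofiber_seq}) to the target variable places $[H\Z/2,\Sigma^{2,1}\underline{k}^M]$ inside a long exact sequence involving pieces of the motivic Steenrod algebra $H\Z/2^{**}H\Z/2$. The class $\pi\circ Sq^2$ is the image under $\pi_*$ of the generator $Sq^2\in H\Z/2^{2,1}H\Z/2$, which is Kronecker-dual to $\xi_1$. The class $\underline{r}\circ\partial$ is the $\eta$-Bockstein of the Witt cofiber sequence (middle row of (\ref{grid_H_HW})), post-composed with reduction modulo $\eta$. To identify the two, I would evaluate both by their Kronecker pairings against lifts of $\xi_1$ into $\underline{k}^M_{**}H\Z/2$: the $Sq^2$ side evaluates to the unit by definition, while the $\underline{r}\circ\partial$ side follows by unwinding the boundary map $\underline{\partial}$ on a chosen lift of $\xi_1$ into $\underline{K}^W_{**}H\Z/2$ and reducing modulo $\eta$.

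The main obstacle is this last matching: one needs an explicit handle on the $\eta$-Bockstein at the level of dual Steenrod classes, which is not automatic from the abstract cofiber sequence alone. A more conceptual alternative is to invoke the homotopy pullback square of Proposition \ref{fundamental-pullback-square}: the desired identity $\pi\circ Sq^2=\underline{r}\circ\partial$ is equivalent, by the pullback property, to the existence of a common lift $\widetilde{Sq^2}:H\Z/2\to\Sigma^{2,1}H_W\Z$ of the pair $(Sq^2,\partial)$, so the question becomes obstruction-theoretic in the bidegree $(2,1)$ piece of the Steenrod algebra. Either route encapsulates the computational content of \cite[Lemma 6.18]{bachmann-etaperiodic-fields}, which the paper borrows.
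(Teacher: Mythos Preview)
Your first reduction is fine: the morphism of triangles gives $d\circ\pi=\underline r\circ\partial$, so the task becomes identifying this class in $[H\Z/2,\Sigma^{2,1}\underline{k}^M]$ with $\pi\circ Sq^2$. But your proposed methods for this identification do not close the gap. Evaluating the $\eta$-Bockstein $\partial$ against a lift of $\xi_1$ is exactly the kind of explicit boundary computation you yourself flag as ``not automatic,'' and you give no mechanism for carrying it out. The alternative via the pullback square merely reformulates the question as the existence of a lift $\widetilde{Sq^2}:H\Z/2\to\Sigma^{2,1}H_W\Z$ of the pair $(Sq^2,\partial)$; you would still need to verify the compatibility $\pi\circ Sq^2=\underline r\circ\partial$ to know such a lift exists, so nothing has been gained.

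The paper's argument avoids computing $\partial$ altogether. Since $\tau$ is a non-zero divisor in the Steenrod algebra, the map $\pi_*:H\Z/2^{2,1}H\Z/2\to[H\Z/2,\Sigma^{2,1}\underline{k}^M]$ is surjective, so $d\circ\pi=\pi\circ f$ for \emph{some} $f\in H\Z/2^{2,1}H\Z/2=\{0,\,\rho Sq^1,\,Sq^2,\,Sq^2+\rho Sq^1\}$. One then eliminates the three wrong candidates: $f\neq0$ because the split sequence (\ref{ses_hkw}) is nontrivial; $f\neq\rho Sq^1$ after base change to a quadratically closed field, where $\rho=0$; and $f\neq Sq^2+\rho Sq^1$ because $d_*^R\circ d_*^R=0$ forces $(f_*^R)^2$ to vanish modulo $\tau$, whereas $((Sq^2+\rho Sq^1)_*^R)^2(\tau_1)=\rho$ does not. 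This enumeration-and-elimination is the missing idea; your Kronecker-pairing and obstruction-theoretic routes would both require an input of comparable strength that you have not supplied.
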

		\begin{proof}
			We proceed as in the proof of \cite[Lemma 6.18]{bachmann-etaperiodic-fields}. Taking $[H\Z/2,-]$ of the $\Sigma^{2,1}$-shifted version of the cofiber sequence (\ref{tau_cofiber_seq}), we see that we have a surjection $$\pi_*:H\Z/2^{2,1}H\Z/2 \thra [H\Z/2,\Sigma^{2,1}\underline{k}^M],\hspace{0.5cm} f\mapsto \pi\circ f,$$
			since $\tau$ is a non-zero divisor in the motivic Steenrod algebra.
			Hence, there exists $f\in H\Z/2^{2,1}H\Z/2$ such that $d\circ \pi=\pi\circ f$. Taking $H\Z/2$-homology, we get a commutative diagram
			\[\begin{tikzcd}
			H\Z/2_{**}H\Z/2 \ar[d,twoheadrightarrow,"\pi^R_*"]\ar[r,"f^R_*"] &\Sigma^{2,1} H\Z/2_{**}H\Z/2 \ar[d,twoheadrightarrow,"\pi_*^R"]\\
			H\Z/2_{**}\underline{k}^M\ar[r,"d^R_*"] & \Sigma^{2,1} H\Z/2_{**}\underline{k}^M,
			\end{tikzcd}\]
			where we used the rightmost column of (\ref{grid_H_HW}) and the fact that $\tau$ is a non-zero divisor in the dual motivic Steenrod algebra to deduce that the vertical maps are surjective. By degree reasons we have $H\Z/2^{2,1}H\Z/2=\{0,\rho Sq^1,Sq^2,Sq^2+\rho Sq^1\}$. Using again the rightmost column of (\ref{grid_H_HW}), we see $H\Z/2_{**}\underline{k}^M\cong H\Z/2_{**}H\Z/2/\tau$. Since this is never trivial and the left and right side of (\ref{ses_hkw}) are shifts of each other, the splitting is never trivial. It follows that $d\neq 0$. After base change to a quadratically closed field we have $\rho=0$, hence we cannot have $f=\rho Sq^1$. We note that the short exact sequence (\ref{ses_hkw}) dictates $d^R_*\circ d^R_*=0$. Using the formulas (\ref{hopf_algebroid_structure}) for $\Delta$ we see that $((Sq^2+\rho Sq^1)^R_*)^2(\tau_1)=(Sq^2+\rho Sq^1)^R_*(\rho\tau_0)=\rho$ is not zero under the map $\pi_*^R$. We must have $f=Sq^2$. This shows the claim.
		\end{proof}
		\begin{notation}
			In concordance with our convetions in Notation \ref{notation_underline_tilde}, we will also refer to $d$ by $\underline{Sq^2}$. 
		\end{notation}
		
		Before we begin our computation, let us remark how this computes the algebra $H\Z/2_{**} H_W\Z$. 
		\begin{lemma}\label{H**HW_square}
			We have a pullback square of rings
			\[\begin{tikzcd}
			{H\Z/2}_{**}H_W\Z \ar[r,hook,"r^R_*"] \ar[d,twoheadrightarrow,"(\pi_W)^R_*"] & H\Z/2_{**}H\Z/2 \ar[d,twoheadrightarrow,"\pi^R_*"]\\
			H\Z/2_{**}\underline{K}^W_{**} \ar[r,hook,"\underline{r}^R_*"] & H\Z/2_{**}\underline{k}^M.
			\end{tikzcd}\]
		\end{lemma}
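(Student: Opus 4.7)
The plan is to start from the pullback square in Proposition \ref{fundamental-pullback-square} smashed on the left with $H\Z/2$, which (since homotopy pullbacks commute with smashing) yields the homotopy pullback of spectra
\[\begin{tikzcd}
H\Z/2\wedge H_W\Z \ar[r,"H\Z/2\wedge r"] \ar[d,"H\Z/2\wedge \pi_W"] & H\Z/2\wedge H\Z/2 \ar[d,"H\Z/2\wedge \pi"]\\
H\Z/2\wedge \underline{K}^W \ar[r,"H\Z/2\wedge \underline{r}"] & H\Z/2\wedge \underline{k}^M.
\end{tikzcd}\]
Taking bigraded homotopy produces a Mayer--Vietoris long exact sequence. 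To upgrade it to a short exact Mayer--Vietoris sequence (and hence to conclude that the resulting square of abelian groups is a pullback), it suffices to verify that one of the two parallel maps targeting $H\Z/2_{**}\underline{k}^M$ is surjective.

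I would establish this for the right vertical map $\pi^R_*$. From the cofiber sequence (\ref{tau_cofiber_seq}), smashing on the left with $H\Z/2$ gives a cofiber sequence whose induced long exact sequence reads
\[
\cdots \to H\Z/2_{*,*-1}H\Z/2 \xra{\tau} H\Z/2_{**}H\Z/2 \xra{\pi^R_*} H\Z/2_{**}\underline{k}^M \to \cdots.
\]
Because $\tau$ is a non-zero divisor in $H\Z/2_{**}H\Z/2$ (visible from the explicit description (\ref{dual_steenrod})), the connecting map vanishes and $\pi^R_*$ is surjective with kernel generated by $\tau$. Feeding this back into the Mayer--Vietoris sequence yields the desired pullback of abelian groups.

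Next I would extract the decorations on the arrows. Injectivity of $\underline{r}^R_*$ is immediate from the split short exact sequence (\ref{ses_hkw}). Once the square is a pullback of abelian groups, formal pullback yoga produces the two remaining properties: $r^R_*$ is injective (its kernel is identified with that of $\underline{r}^R_*$, which vanishes), and $(\pi_W)^R_*$ is surjective (any element of $H\Z/2_{**}\underline{K}^W$ can be paired with a $\pi^R_*$-preimage of its image in $H\Z/2_{**}\underline{k}^M$). Finally, since $r$, $\pi_W$, $\pi$ and $\underline{r}$ are all morphisms of commutative ring spectra, the induced maps on $H\Z/2$-homology are $H\Z/2_{**}$-algebra maps; a pullback in abelian groups of a compatible diagram of rings is automatically a pullback in rings, which gives the lemma.

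The proof is essentially formal once the two inputs -- surjectivity of $\pi^R_*$ and injectivity of $\underline{r}^R_*$ -- are in place. The only mild subtlety is keeping track of the ring structure: one needs to know that $r$ is a map of ring spectra (which follows from the fact that the fundamental square in Proposition \ref{fundamental-pullback-square} is a pullback of commutative ring spectra, $r$ being the composition $H_W\Z\to H\Z\to H\Z/2$ up to the identification coming from the pullback), so there is no genuine obstacle.
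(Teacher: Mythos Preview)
Your proposal is correct and follows essentially the same line as the paper's proof: both use the Mayer--Vietoris sequence for the smashed pullback square, establish surjectivity of $\pi^R_*$ from the $\tau$-cofiber sequence and injectivity of $\underline{r}^R_*$ from (\ref{ses_hkw}), and then read off the remaining decorations formally. The only small refinement worth noting is that the map you label $\tau$ is $\tau^R_*$, i.e.\ right multiplication by $\tau$, which in the left $H\Z/2_{**}$-module picture is multiplication by $\iota(\tau)=\tau+\rho\tau_0$; the paper makes this explicit, but of course $\iota(\tau)$ is equally a non-zero divisor, so your argument stands.
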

		\begin{proof}
			As $(\ref{ses_hkw})$ shows, $\underline{r}^R_*$ is an injection on $H\Z/2_{**}$-homology, hence $r^R_*$ injects $H\Z/2_{**}H_W\Z$ into $H\Z/2_{**}H\Z/2$ by a simple diagram chase. The map $\tau^R_*$, which is just given by multiplication by $\tau$ on the right, is injective since $\tau$, and therefore $\iota(\tau)$, is a non-zero divisor in $H\Z/2_{**}H\Z/2$. Hence, $\pi^R_*$ and thus $(\pi_W)^R_*$ are surjective. The Mayer-Vietoris sequence shows that if one of the maps one pulls back along is surjective on homotopy groups, then one has a pullback square on homotopy groups. This and the fact that all involved maps are maps of ring spectra now imply the claim.
		\end{proof}
		
		To compute the kernel of $d^R_*$, i.e. $H\Z/2_{**}\underline{K}^W$, we use its behavior on products. Recall Proposition \ref{Sq2_on_products}: 
		$$(Sq^2)^R_* (x\cdot y)=(Sq^2)^R_* (x)\cdot y+x\cdot (Sq^2)^R_*(y)+(\tau+\rho\tau_0)\cdot (Sq^1)^R_*(x)\cdot (Sq^1)^R_*(y).$$ 
		This means $d^R_*$ is a derivation, since it is obtained from $(Sq^2)^R_*$ by modding out by $(\tau+\rho\tau_0)$ as can be seen by Corollary \ref{description_of_d}. The formulas (\ref{hopf_algebroid_structure}), Proposition $\ref{*alphaandalpha*formula}$ and Proposition \ref{Sq2_on_products} imply inductively
		$$(Sq^2)^R_*(\tau_i)=0, \hspace{1cm} (Sq^2)^R_*(\overline{\xi_1})=1,\hspace{1cm}(Sq^2)^R_*(\overline{\xi_j})=0,\hspace{0.5cm}\text{for }j\neq 1.$$
		\begin{proposition}\label{H_KW}
			We have an isomorphism of left $H\Z/2_{**}$-algebras 
			$$ H\Z/2_{**} \underline{K}^W \overset{\underline{r}^R_*}{\underset{\cong}\lra}H\Z/2_{**}[\tau_0,\ldots, \overline{\xi_1}^2,\overline{\xi_2},\overline{\xi_3}\ldots]/(\tau+\rho\tau_0,\tau_i^2=\rho\tau_{i+1}) \hra  H\Z/2_{**}\underline{k}^M.$$
		\end{proposition}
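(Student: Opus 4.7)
The plan is to identify $H\Z/2_{**}\underline{K}^W$ with $\ker(d^R_*)\subseteq H\Z/2_{**}\underline{k}^M$ and then read that kernel off from the splitting (\ref{H**Hspliting}). From the split short exact sequence (\ref{ses_hkw}), $\underline{r}^R_*$ is injective with image $\ker(\underline{\partial}^R_*)$, and since $d=\underline{r}\circ\underline{\partial}$ with $\underline{r}^R_*$ injective we have $\ker(\underline{\partial}^R_*)=\ker(d^R_*)$. So $\underline{r}^R_*$ realizes $H\Z/2_{**}\underline{K}^W$ as the $H\Z/2_{**}$-subalgebra $\ker(d^R_*)\subseteq H\Z/2_{**}\underline{k}^M$, and the problem reduces to computing this kernel.

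The cofiber sequence (\ref{tau_cofiber_seq}) identifies $H\Z/2_{**}\underline{k}^M\cong H\Z/2_{**}H\Z/2/(\eta_R(\tau))$ with $\eta_R(\tau)=\tau+\rho\tau_0$. By Proposition \ref{description_of_d}, $d^R_*$ is the map induced by $(Sq^2)^R_*$ on this quotient. The correction term $(\tau+\rho\tau_0)(Sq^1)^R_*(x)(Sq^1)^R_*(y)$ in Proposition \ref{Sq2_on_products} vanishes modulo $(\tau+\rho\tau_0)$, so $d^R_*$ is an honest left $H\Z/2_{**}$-linear derivation on $H\Z/2_{**}\underline{k}^M$, determined by the generator values $d^R_*(\tau_i)=0$, $d^R_*(\overline{\xi_1})=1$, and $d^R_*(\overline{\xi_j})=0$ for $j\geq 2$ recorded above.

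The splitting (\ref{H**Hspliting}) $H\Z/2_{**}H\Z/2\cong K\oplus\overline{\xi_1}K$ of $H\Z/2_{**}$-modules descends to $H\Z/2_{**}\underline{k}^M\cong \overline{K}\oplus\overline{\xi_1}\overline{K}$ with $\overline{K}:=K/(\tau+\rho\tau_0)$, and $\overline{K}$ is in fact a subalgebra once the obstructions $(\tau+\rho\tau_0)\xi_{i+1}$ in the relation $\tau_i^2=\rho\tau_{i+1}+(\tau+\rho\tau_0)\xi_{i+1}$ are killed. Because $d^R_*$ is a derivation annihilating each generator of $\overline{K}$ — the $\tau_i$ and $\overline{\xi_j}$ ($j\geq 2$) directly, and $\overline{\xi_1}^2$ because $d^R_*(\overline{\xi_1}^2)=2\overline{\xi_1}=0$ in characteristic $2$ — we conclude $d^R_*|_{\overline{K}}=0$. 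On the complementary summand, Leibniz and $d^R_*(\overline{\xi_1})=1$ give $d^R_*(\overline{\xi_1}k)=k$ for $k\in\overline{K}$, so $d^R_*$ restricts to an isomorphism $\overline{\xi_1}\overline{K}\xrightarrow{\cong}\overline{K}$ (up to the $\Sigma^{2,1}$-shift). Hence $\ker(d^R_*)=\overline{K}$, and reducing $\tau_i^2=\rho\tau_{i+1}+(\tau+\rho\tau_0)\xi_{i+1}$ modulo $(\tau+\rho\tau_0)$ to $\tau_i^2=\rho\tau_{i+1}$ yields the stated presentation. The main subtlety is that $K$ itself is not a subalgebra of $H\Z/2_{**}H\Z/2$ (the squares of the $\tau_i$ produce the off-summand term $(\tau+\rho\tau_0)\xi_{i+1}$); the proportionality of the Cartan correction to $(\tau+\rho\tau_0)$ is precisely what salvages both the derivation property and the subalgebra property after passing to the quotient, making the generator-by-generator analysis valid.
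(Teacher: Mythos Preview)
Your proof is correct and follows essentially the same approach as the paper's: both identify $H\Z/2_{**}\underline{K}^W$ with $\ker(d^R_*)$ via the split sequence (\ref{ses_hkw}), use the derivation property of $d^R_*$ (from the vanishing of the Cartan correction modulo $\tau+\rho\tau_0$) together with the decomposition coming from (\ref{H**Hspliting}) to show $\ker(d^R_*)=\overline{K}$, and then read off the relations by reducing $\tau_i^2=\rho\tau_{i+1}+(\tau+\rho\tau_0)\xi_{i+1}$. The only place the paper is slightly more explicit is in verifying that no \emph{further} relations appear in the presentation, which it does by observing that the monomials $\tau(E)\overline{\xi}(R)$ form a right $\underline{k}^M_{**}$-basis of $H\Z/2_{**}\underline{k}^M$ and hence remain $k^M_*$-linearly independent; you might add a sentence to that effect, but the argument is otherwise complete.
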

		\begin{proof}
			Let $K$ be as in (\ref{H**Hspliting}). By what we have just computed, $\underline{K}:=\pi_*^R(K)$ lies in the kernel of $d^R_*$. Furthermore, $d^R_*$ acts on $\overline{\xi_1} \underline{K}$ via $\overline{\xi_1} k\mapsto k$. So $\im(\underline{r}^R_*)=\underline{K}$ and since $\underline{r}^R_*$ is a ring map, $\underline{K}$ is a left $H\Z/2_{**}$-algebra.
			
			To deduce the relations, note that monomials
			$$\tau_0^{E_0}\tau_1^{E_1}\cdots\overline{\xi_1}^{R_1}\overline{\xi_2}^{R_2}\overline{\xi_3}^{R_3}\cdots,$$ with $E_i\in \{0,1\}$ and $R_i\in \N_0$, form a right $\underline{k}^M_{**}$-basis for $H\Z/2_{**}\underline{k}^M$. In particular, the monomials $$\tau_0^{E_0}\tau_1^{E_1}\cdots\overline{\xi_1}^{2R_1}\overline{\xi_2}^{R_2}\overline{\xi_3}^{R_3}\cdots$$ are therefore $k^M_{*}$-linearly independent. Any other relations in $H\Z/2_{**}\underline{K}^W$ must therefore contain higher degrees of $\tau_i$. Since the relations in the result obviously hold, we already have relations for $\tau_i^2$. Hence, we may reduce to lower degrees in them. The claim follows.
		\end{proof}
		\begin{remark}
			Since we had no assumptions on our base field, it follows that the right $k_*^M(k)$-basis of $H\Z/2_{**}\underline{K}^W$ mentioned in the proof describes $H\Z/2\wedge \underline{K}^W$ as a free right module over $\underline{k}^M$.
		\end{remark}
		
		\begin{proposition}\label{H_H_W}
			We have an isomorphism of left $H\Z/2_{**}$-algebras 
			$$ H\Z/2_{**} H_W\Z \overset{r^R_*}{\underset{\cong}\lra}H\Z/2_{**}[\tau_0,\ldots, \overline{\xi_1}\tau,\overline{\xi_1}^2,\overline{\xi_2},\overline{\xi_3}\ldots]/(\ldots) \hra  H\Z/2_{**}H\Z/2,$$
			where the ideal $(\ldots)$ is generated by the relations 
			$$\tau_i^2=\rho\tau_{i+1}+\xi_{i+1}\tau,\hspace{1cm} (\overline{\xi_1}\tau)^2=\overline{\xi_1}^2\tau^2.$$
		\end{proposition}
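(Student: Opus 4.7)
The plan is to exploit the pullback square of Lemma \ref{H**HW_square}. Since $r^R_*$ is injective, it suffices to identify its image inside $H\Z/2_{**}H\Z/2$ as the preimage under $\pi^R_*$ of the subring $\underline{r}^R_*(H\Z/2_{**}\underline{K}^W)$, which was determined in Proposition \ref{H_KW}. To describe the kernel of $\pi^R_*$, I would smash the cofiber sequence $\Sigma^{0,-1}H\Z/2 \xra{\tau} H\Z/2 \xra{\pi} \underline{k}^M$ with $H\Z/2$ on the left and take homotopy; since $\tau$ is a non-zero divisor in $H\Z/2_{**}H\Z/2$, the associated long exact sequence degenerates into a short exact sequence exhibiting $\ker(\pi^R_*)$ as the ideal $(\eta_R(\tau)) = (\tau+\rho\tau_0)$.

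Next I would use the splitting $H\Z/2_{**}H\Z/2 \cong K \oplus \overline{\xi_1}K$ from \eqref{H**Hspliting}. By Proposition \ref{H_KW} together with the proof there, $\pi^R_*(K) = \underline{r}^R_*(H\Z/2_{**}\underline{K}^W)$, and the splitting is preserved by multiplication by $\tau+\rho\tau_0$. Hence the preimage decomposes as
\[
    (\pi^R_*)^{-1}\bigl(\underline{r}^R_*(H\Z/2_{**}\underline{K}^W)\bigr) \;=\; K \,\oplus\, \bigl(\overline{\xi_1}K \cap \ker \pi^R_*\bigr) \;=\; K \,\oplus\, (\tau+\rho\tau_0)\,\overline{\xi_1}K.
\]
Since $\overline{\xi_1}\tau = \eta_R(\tau)\overline{\xi_1} = (\tau+\rho\tau_0)\overline{\xi_1}$, this is exactly $K \oplus \overline{\xi_1}\tau\cdot K$. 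Thus the image of $r^R_*$ is generated, as an $H\Z/2_{**}$-subalgebra of $H\Z/2_{**}H\Z/2$, by the generators of $K$ (namely $\tau_0,\tau_1,\ldots, \overline{\xi_1}^2, \overline{\xi_2},\overline{\xi_3},\ldots$) together with the new element $\overline{\xi_1}\tau$.

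The relations are readily verified: $\tau_i^2 = \rho\tau_{i+1}+\xi_{i+1}\tau$ is inherited from the ambient algebra (where $\xi_{i+1}$ is rewritten in terms of the $\overline{\xi_j}$'s via the conjugation formulas in \eqref{hopf_algebroid_structure}; note $\overline{\xi_1}=\xi_1$, so $\xi_1\tau = \overline{\xi_1}\tau$ is a generator, and for $i\geq 1$ the extra terms are products $\overline{\xi_1}^{2m}\cdot(\overline{\xi_1}\tau)$ or scalar multiples of $\overline{\xi_j}$'s lying in our subring). For the second relation, a direct computation gives
\[
    (\overline{\xi_1}\tau)^2 \;=\; \bigl((\tau+\rho\tau_0)\overline{\xi_1}\bigr)^2 \;=\; (\tau+\rho\tau_0)^2\overline{\xi_1}^2 \;=\; \overline{\xi_1}^2\cdot\eta_R(\tau)^2 \;=\; \overline{\xi_1}^2\tau^2.
\]

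The main obstacle is to show that these relations are complete, i.e.\ that the map from the abstract $H\Z/2_{**}$-algebra on the right hand side of the Proposition onto $K \oplus \overline{\xi_1}\tau K$ is injective. I would exhibit a monomial basis of the abstract quotient, namely $\tau_0^{E_0}\tau_1^{E_1}\cdots\overline{\xi_1}^{2R_1}\overline{\xi_2}^{R_2}\cdots$ and $(\overline{\xi_1}\tau)\cdot\tau_0^{E_0}\tau_1^{E_1}\cdots\overline{\xi_1}^{2R_1}\overline{\xi_2}^{R_2}\cdots$ with $E_i\in\{0,1\}$, $R_i\in\N_0$, and observe that these push forward to distinct monomials in the free $H\Z/2_{**}$-basis of $H\Z/2_{**}H\Z/2$ appearing in the splitting \eqref{H**Hspliting}. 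Linear independence in the target then forces injectivity, so the map is an isomorphism onto $K\oplus \overline{\xi_1}\tau K$, completing the proof.
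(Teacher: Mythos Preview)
Your overall strategy via the pullback square of Lemma~\ref{H**HW_square} is the same as the paper's, and your identification of the image of $r^R_*$ as $(\pi^R_*)^{-1}(\underline{K})$ is fine. The gap is the assertion that the splitting $K\oplus\overline{\xi_1}K$ is preserved by multiplication by $\tau+\rho\tau_0$. It is not. For example,
\[
(\tau+\rho\tau_0)\cdot\overline{\xi_1}\tau_0
=\tau\overline{\xi_1}\tau_0+\rho\overline{\xi_1}\tau_0^2
=\overline{\xi_1}(\tau\tau_0+\rho^2\tau_1)+\rho(\tau+\rho\tau_0)\overline{\xi_1}^2,
\]
and the last term lies in $K$, not in $\overline{\xi_1}K$. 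Hence your equation $\overline{\xi_1}K\cap\ker\pi^R_*=(\tau+\rho\tau_0)\overline{\xi_1}K$ fails already because the right-hand side is not contained in $\overline{\xi_1}K$. The same phenomenon undermines your final step: the elements $(\overline{\xi_1}\tau)\cdot\tau(E)\overline{\xi_1}^{2R_1}\overline{\xi_2}^{R_2}\cdots$ do \emph{not} map to single basis monomials in $H\Z/2_{**}H\Z/2$; they are genuine linear combinations, so ``distinct monomials'' does not immediately give linear independence.

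What is missing is exactly the technical point the paper's proof supplies: one shows that the composite
\[
K\xhra{\cdot\overline{\xi_1}\tau} H\Z/2_{**}H\Z/2\cong K\oplus\overline{\xi_1}K\thra \overline{\xi_1}K
\]
is injective by observing that, in a suitable lexicographic order on the monomial basis, the matrix of this map is lower triangular with $\tau$ on the diagonal (the leading term of $(\overline{\xi_1}\tau)\cdot b$ is $\tau\overline{\xi_1}b$, and all further terms arising from expanding $\tau_i^2$ have strictly larger index). This simultaneously establishes that $K+K\overline{\xi_1}\tau$ is an internal direct sum and that the claimed relations are complete. Your argument would be repaired by inserting this triangularity step in place of the two problematic claims.
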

		\begin{proof}
			That we have the given generators follows from the pullback square of Lemma \ref{H**HW_square}. The first column of the grid (\ref{grid_H_HW}) gives a short exact sequence 
			$$0\ra \Sigma^{0,-1} H\Z/2_{**}H\Z/2\xra{\tilde{\tau}^R_*}H\Z/2_{**}H_W\Z\xra{(\pi_W)^R_*}H\Z/2_{**}\underline{K}^W\ra 0$$ 
			with $\tilde{\tau}^R_*(\xi_{i+1})= \tau_i^2-\rho\tau_{i+1}$ and $\tilde{\tau}^R_*(\tau_i)=(\tau+\rho\tau_0)\tau_i.$ 
			The formulas (\ref{hopf_algebroid_structure}) for $\overline{\xi_i}$ demonstrate that it suffices to have $\overline{\xi_1}\tau$ and $\overline{\xi_{j}}$, with $j\geq 2$, to recreate $\overline{\xi_{j}}\tau$. For more details see Remark \ref{elements_in_H_H_W}. The given set thus generates $H\Z/2_{**}H_W\Z$ as an algebra. The relations for $\tau_i^2$ are from $H\Z/2_{**}H\Z/2$. The second relation $(\overline{\xi_1}\tau)^2=\overline{\xi_1}^2\tau^2$ obviously holds in $H\Z/2_{**}H\Z/2$ and thus in $H\Z/2_{**}H_W\Z$. We have reduced all possible relations to lower powers of the generators. It remains to show that the monomials 
			$$\tau_0^{E_0}\tau_1^{E_1}\cdots(\overline{\xi_1}\tau)^\epsilon\overline{\xi_1}^{2R_1}\overline{\xi_2}^{R_2}\overline{\xi_3}^{R_3}\cdots,$$ with $\epsilon,E_i\in \{0,1\}$ and $R_i\in \N_0$, form a basis. In other words $H\Z/2_{**}H_W\Z\cong K\oplus K\overline{\xi_1}\tau$. To show that the above monomials are a basis It suffices to show that $K\oplus K\overline{\xi_1}\tau$ injects into $H\Z/2_{**}H\Z/2$ via the natural map $(a,b)\mapsto a+b$. Given the splitting (\ref{H**Hspliting}), it hence suffices to show that the map 
			\begin{equation}\label{temp2}
				K\overline{\xi_1}\tau\hra H\Z/2_{**}H\Z/2\cong K\oplus \overline{\xi_1} K\thra \overline{\xi_1}K
			\end{equation}
			is an injection. Multiplying a basis element $b$ with $\overline{\xi_1}\tau=\tau\overline{\xi_1}+\rho\tau_0\overline{\xi_1}$ will produce a term of the form $\tau\overline{\xi_1} b+T$ with $T$ some term with all summands being greater than $\overline{\xi_1}b$ in terms of a lexicographic order $\tau_i<\overline{\xi_i}$. Indeed, expanding $\tau_i^2$ we get $\rho\tau_{i+1}+\tau\xi_{i+1}+\rho\tau_0\xi_{i+1}$, a term that only leads to greater value with respect to the lexicographic order by the conjugation formulas $(\ref{hopf_algebroid_structure})$. Thus, the matrix of the map 
			\begin{equation}\label{temp3}
				K\hra H\Z/2_{**}H\Z/2\cong K\oplus \overline{\xi_1} K\xhra{\cdot \overline{\xi_1}\tau} K\oplus \overline{\xi_1} K \thra K\overline{\xi_1}{\tau}
			\end{equation}
			is lower triangular with $\tau$ on a diagonal shifted by the value of $\overline{\xi_1}$ with respect to the chosen lexicographic order. Hence (\ref{temp3}), and thus also (\ref{temp2}), are  injective. Therefore $H\Z/2_{**}H_W\Z\cong K\oplus K\overline{\xi_1}\tau$ as left $H\Z/2_{**}$-modules. This proves the claim.
		\end{proof}
	
		\begin{remark}\label{HHWpsf}
			This result shows $H\Z/2\wedge H_W\Z$ is a free left module over $H\Z/2$. It is also free as a right module over $H\Z/2$. Indeed, this statement is equivalent to the map $$H\Z/2_{**}H_W\Z\ra H\Z/2_{**}H_W\Z, \tau \mapsto \tau+\rho\tau_0,$$ being a $k^M_*(k)$-isomorphism. To see that it is one checks what happens to basis elements, that is monomials
			$$\tau_0^{E_0}\tau_1^{E_1}\cdots(\overline{\xi_1}\tau)^\epsilon\overline{\xi_1}^{2R_1}\overline{\xi_2}^{R_2}\overline{\xi_3}^{R_3}\cdots,$$ with $\epsilon,E_i\in \{0,1\}$ and $R_i\in \N_0$. We have $(\tau+\rho\tau_0)^k=\tau^k+T$ with $T$ divisible by powers of $\rho\tau_0$. When multiplying this with a basis monomial, the equations for $\tau_i^2$ tell us that after we expand these into basis monomials, all the summands will have increased index in $E$ or $R$. Therefore they will be greater with respect to a lexicographic order $\tau_i<\overline{\xi_i}$. So the matrix of this map has $1$s on the diagonal and is lower triangular, proving the claim. This is also true for the basis on $H\Z/2_{**}H\Z/2$ by the same proof, see \cite[Cor 6.4]{bachmann-etaperiodic-fields}). 
		\end{remark}
	
		\begin{remark}\label{elements_in_H_H_W}
			We would like to emphasize that $H\Z/2_{**}H_W\Z$ contains all of $H\Z/2_{**}H\Z/2\tau$, hence the relations of Proposition \ref{H_H_W} make sense. It also contains all of $\xi_r^2$. More explicitly, by the conjugation formulas (\ref{hopf_algebroid_structure})
			$$\xi_1^2=\overline{\xi_1}^2, \hspace{1cm} \xi_r^2=\overline{\xi_r}^2+\sum_{i=1}^{r-1}\xi_{r-i}^{2^{i+1}}\overline{\xi_i}^2, \hspace{1cm}\xi_r\tau=\overline{\xi_r}\tau+\sum_{i=1}^{r-1}\xi_{r-i}^{2^i}\overline{\xi_i}\tau.$$
			The first element is in $H\Z/2_{**}H_W\Z$ as a generator. By induction, the second formula shows that all $\xi_i^2$ are in $H\Z/2_{**}H_W\Z$. Similarly, the element $\xi_1\tau$ is in $H\Z/2_{**}H_W\Z$ as a generator, hence the last formula shows by induction that all $\xi_i\tau$ are in $H\Z/2_{**}H_W\Z$. Ideally we would write the relations of Proposition \ref{H_H_W} using an explicit formula in the $\overline{\xi_i}$, $\overline{\xi_1}^2$ $\overline{\xi_1}\tau$, but this seems unfeasible. We must therefore track carefully what is in $H\Z/2_{**}H_W\Z$ and not get confused by multiplication with $\tau$ on the left and right. We mention as an example $\xi_2\tau=\overline{\xi_2}\tau+\overline{\xi_1}^2(\overline{\xi_1}\tau)$.
		\end{remark}
		We will prefer to suppress the generator $\overline{\xi_1}\tau$ as $\tau_0^2-\rho\tau_1$ and instead use the presentation
		\begin{corollary}
			We have an isomorphism of left $H\Z/2_{**}$-algebras 
			$$ H\Z/2_{**} H_W\Z \overset{r^R_*}{\underset{\cong}\lra}H\Z/2_{**}[\tau_0,\ldots,\overline{\xi_1}^2,\overline{\xi_2},\overline{\xi_3}\ldots]/(\ldots) \hra  H\Z/2_{**}H\Z/2,$$
			where the ideal $(\ldots)$ is generated by the relations 
			$$\tau_0^4=\rho^3\tau_2+\rho^2(\overline{\xi_1}^2(\tau_0^2-\rho\tau_1)+\overline{\xi_2}\tau)+\overline{\xi_1}^2\tau^2,\hspace{1cm}\tau_{i}^2=\rho\tau_{i+1}+\xi_{i+1}\tau,\hspace{0.5cm}\text{for }i\geq 1.$$
		\end{corollary}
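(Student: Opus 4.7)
The proof plan is to rewrite the presentation of Proposition \ref{H_H_W} by eliminating the generator $\overline{\xi_1}\tau$, which is redundant once we know the relation $\tau_0^2=\rho\tau_1+\xi_1\tau$ from $H\Z/2_{**}H\Z/2$. Since $\iota(\xi_1)=\xi_1$, i.e.\ $\overline{\xi_1}=\xi_1$, the $i=0$ case of the relation $\tau_i^2=\rho\tau_{i+1}+\xi_{i+1}\tau$ reads $\tau_0^2=\rho\tau_1+\overline{\xi_1}\tau$, so that
$$\overline{\xi_1}\tau=\tau_0^2-\rho\tau_1$$
in $H\Z/2_{**}H_W\Z$. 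Thus I drop $\overline{\xi_1}\tau$ from the list of generators in Proposition \ref{H_H_W}, and the relation $\tau_0^2=\rho\tau_1+\xi_1\tau$ becomes the \emph{definition} of $\overline{\xi_1}\tau$ rather than an additional relation. The remaining generators are therefore $\tau_0,\tau_1,\ldots,\overline{\xi_1}^2,\overline{\xi_2},\overline{\xi_3},\ldots$ as claimed.

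The only relations from Proposition \ref{H_H_W} that are not immediately among the $\tau_i^2=\rho\tau_{i+1}+\xi_{i+1}\tau$ for $i\geq 1$ are $(\overline{\xi_1}\tau)^2=\overline{\xi_1}^2\tau^2$, and this is what produces the quartic relation on $\tau_0$. Substituting $\overline{\xi_1}\tau=\tau_0^2-\rho\tau_1$ and computing in characteristic $2$ gives
$$(\tau_0^2-\rho\tau_1)^2=\tau_0^4+\rho^2\tau_1^2.$$
Applying the $i=1$ relation $\tau_1^2=\rho\tau_2+\xi_2\tau$ turns this into $\tau_0^4+\rho^3\tau_2+\rho^2\xi_2\tau$. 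Finally, using the conjugation formula (\ref{hopf_algebroid_structure}) together with Remark \ref{elements_in_H_H_W}, we have
$$\xi_2\tau=\overline{\xi_2}\tau+\overline{\xi_1}^2(\overline{\xi_1}\tau)=\overline{\xi_2}\tau+\overline{\xi_1}^2(\tau_0^2-\rho\tau_1).$$
Rearranging $\tau_0^4+\rho^3\tau_2+\rho^2\xi_2\tau=\overline{\xi_1}^2\tau^2$ then yields exactly
$$\tau_0^4=\rho^3\tau_2+\rho^2(\overline{\xi_1}^2(\tau_0^2-\rho\tau_1)+\overline{\xi_2}\tau)+\overline{\xi_1}^2\tau^2,$$
which is the stated relation.

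To finish, I would check that no additional relations have been lost. Since Proposition \ref{H_H_W} gives a presentation, and the map of presentations induced by $\overline{\xi_1}\tau\mapsto\tau_0^2-\rho\tau_1$ is an isomorphism of free algebras modulo the transported ideal, it suffices to verify that the ideal generated by the listed relations in the new presentation equals the image of the old ideal. The $i\geq 1$ relations are unchanged, and the previous calculation shows that the single quartic relation is exactly the transport of $(\overline{\xi_1}\tau)^2=\overline{\xi_1}^2\tau^2$. The main bookkeeping hazard is keeping track of the right $H\Z/2_{**}$-action, since $\tau$ on the right means $\eta_R(\tau)=\tau+\rho\tau_0$; however, in all the expressions above $\tau$ appears only as a scalar in monomials of the form $\xi_i\tau$ or $\overline{\xi_i}\tau$, and these are handled uniformly by the formulas of Remark \ref{elements_in_H_H_W}, so no correction terms are overlooked.
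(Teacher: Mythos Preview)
Your proof is correct and follows exactly the approach the paper intends: the paper simply states ``We will prefer to suppress the generator $\overline{\xi_1}\tau$ as $\tau_0^2-\rho\tau_1$'' and presents the Corollary without further argument, so you have supplied precisely the elimination-and-substitution computation that the paper leaves implicit. Your derivation of the quartic relation from $(\overline{\xi_1}\tau)^2=\overline{\xi_1}^2\tau^2$ via $\tau_1^2=\rho\tau_2+\xi_2\tau$ and the identity $\xi_2\tau=\overline{\xi_2}\tau+\overline{\xi_1}^2(\tau_0^2-\rho\tau_1)$ from Remark \ref{elements_in_H_H_W} is exactly right.
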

		
	\subsection{Computation of \texorpdfstring{$\underline{k}^M\wedge H_W\Z$}{kM Homology of Hw}}
		Next on the list is $\underline{k}^M\wedge H_W\Z$. 
		
		By Proposition \ref{*alphaandalpha*formula}, the map $\tau_*^L:H\Z/2_{**}H\Z/2\ra H\Z/2_{**}H\Z/2$ is given by multiplication by $\tau$ on the left. Since $r^R_*:H\Z/2_{**}H_W\Z\hra H\Z/2_{**}H\Z/2$ and $\tau$ is a non-zero divisor in $H\Z/2_{**}H\Z/2$, $\tau$ is also a non-zero divisor in $H\Z/2_{**}H_W\Z$. So by the long exact sequence of the second column of (\ref{grid_H_HW}) we have the short exact sequence
		
		\begin{equation}\label{kMHW}
			0\ra \Sigma^{0,-1}H\Z/2_{**}H_W\Z\xra{{\tau_*^L}}H\Z/2_{**}H_W\Z\xra{{\pi_*^L}} \underline{k}^M_{**}H_W\Z\ra 0.
		\end{equation}
		 It follows immediately
		\begin{proposition}\label{kM_H_W}
			We have as a left $\underline{k}^M_{**}$-algebra
			$$\underline{k}^M_{**}H_W\Z\cong \underline{k}^M_{**}[\tau_0,\ldots,\overline{\xi_1}^2,\overline{\xi_2},\overline{\xi_3}\ldots]/(\ldots),$$
			where the ideal $(\ldots)$ is generated by the relations 
			$$\tau_0^4=\rho^3(\tau_0\overline{\xi_2}+\tau_1\overline{\xi_1}^2+\tau_2),\hspace{1cm}\tau_{r}^2=\rho\tau_{r+1}+\xi_r^2(\tau_0^2-\rho\tau_1)+\rho\tau_0\sum_{i=2}^{r+1}\xi_{r+1-i}^{2^i}\overline{\xi_i},\hspace{0.5cm}\text{for }r\geq 1.$$
		\end{proposition}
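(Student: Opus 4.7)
The strategy is to exploit the short exact sequence \eqref{kMHW}: since $\tau$ is a non--zero divisor on $H\Z/2_{**}H_W\Z$ (it acts by left multiplication, and $r^R_*$ injects into $H\Z/2_{**}H\Z/2$, where $\tau$ is a non--zero divisor), we have a canonical presentation
\[
\underline{k}^M_{**}H_W\Z \;\cong\; H\Z/2_{**}H_W\Z \,/\, \tau\cdot (H\Z/2_{**}H_W\Z),
\]
where $\tau$ acts via $\eta_L$. The plan is therefore to take the presentation of $H\Z/2_{**}H_W\Z$ from the corollary preceding this proposition and reduce modulo left multiplication by $\tau$. The generators $\tau_0,\tau_1,\ldots,\overline{\xi_1}^2,\overline{\xi_2},\overline{\xi_3},\ldots$ descend directly, and the coefficient ring becomes $\underline{k}^M_{**}$.

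The core computational task is translating the defining relations, and the only subtlety is the convention $x\cdot s=\eta_R(s)x$: any occurrence of a right scalar $\tau$ must first be rewritten as $\eta_R(\tau)=\tau+\rho\tau_0$ acting from the left, and then the $\tau$ summand is killed, leaving $\rho\tau_0$. First I would handle the family of relations $\tau_r^2=\rho\tau_{r+1}+\xi_{r+1}\tau$ for $r\geq 1$: apply the reduction to obtain $\tau_r^2=\rho\tau_{r+1}+\rho\tau_0\,\xi_{r+1}$, and then use the conjugation formula $\xi_{r+1}=\overline{\xi_{r+1}}+\sum_{i=1}^{r}\xi_{r+1-i}^{2^i}\overline{\xi_i}$ from \eqref{hopf_algebroid_structure} to expand in the chosen generators. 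The $i=1$ term $\rho\tau_0\xi_r^2\overline{\xi_1}$ then matches $\xi_r^2(\tau_0^2-\rho\tau_1)$ after reducing the identity $\tau_0^2-\rho\tau_1=(\tau+\rho\tau_0)\overline{\xi_1}$ modulo $\tau$; the remaining summands assemble into $\rho\tau_0\sum_{i=2}^{r+1}\xi_{r+1-i}^{2^i}\overline{\xi_i}$, yielding the stated expression.

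The slightly harder case, which I view as the main technical point, is the $\tau_0^4$ relation: here one has to reduce the term $\overline{\xi_1}^2\tau^2$. Using commutativity of $H\Z/2_{**}H\Z/2$ together with the convention, $\overline{\xi_1}^2\tau^2=(\tau+\rho\tau_0)^2\overline{\xi_1}^2$; in characteristic $2$ this equals $(\tau^2+\rho^2\tau_0^2)\overline{\xi_1}^2$, and the $\tau^2$ piece dies modulo left $\tau$. Substituting the already--derived reduction $\tau_0^2=\rho\tau_1+\rho\tau_0\overline{\xi_1}\pmod{\tau}$, and combining with the reductions of $\overline{\xi_1}^2(\tau_0^2-\rho\tau_1)$ and $\overline{\xi_2}\tau$ (both becoming $\rho\tau_0$--multiples), the $\rho^3\tau_0\overline{\xi_1}^3$ contributions cancel in characteristic $2$, leaving $\tau_0^4=\rho^3(\tau_0\overline{\xi_2}+\tau_1\overline{\xi_1}^2+\tau_2)$ as claimed.

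Finally, to conclude that no further relations are present, I would invoke exactness of \eqref{kMHW}: since $\tau$ is a non--zero divisor, the quotient inherits a left $H\Z/2_{**}$--basis from that of $H\Z/2_{**}H_W\Z$ by reducing monomials, and the relations above are already seen to hold. Comparing to the left $H\Z/2_{**}$--module basis for $H\Z/2_{**}H_W\Z$ established in Proposition \ref{H_H_W}, the indicated relations exhaust all relations in $\underline{k}^M_{**}H_W\Z$, since any further relation would contradict the $\underline{k}^M_{**}$--linear independence of the induced monomial basis. This completes the proof.
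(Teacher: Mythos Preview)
Your proposal is correct and follows exactly the approach the paper intends: the paper simply says ``It follows immediately'' from the short exact sequence \eqref{kMHW}, meaning $\underline{k}^M_{**}H_W\Z\cong H\Z/2_{**}H_W\Z/\tau$, and you have carried out in full the reduction of the relations from the preceding corollary modulo left $\tau$, correctly handling the $\eta_R(\tau)=\tau+\rho\tau_0$ convention and the cancellation of the $\rho^3\tau_0\overline{\xi_1}^3$ terms. Your argument is in fact more detailed than what the paper provides.
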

		\begin{remark}\label{kMHW_is_psf}
			The description shows that $\underline{k}^M\wedge H_W\Z$ is a free left module over $\underline{k}^M$ with basis given by monomials 
			$$\tau_0^{E_0}\tau_1^{E_1}\cdots\overline{\xi_1}^{2R_1}\overline{\xi_2}^{R_2}\overline{\xi_3}^{R_3}\cdots,$$ with $E_0\in \{0,1,2,3\},E_i\in \{0,1\}$ and $R_i\in \N_0$. We will later use alternative basis monomials $\tau(E)\overline{\xi}(I+2S)$ with $E=(E_0,E_1,\ldots)$, $I=(I_2,I_3,\ldots)$ and $S=(S_1,S_2,\ldots)$ finite sequences with $E_0\in\{0,1,2,3\}$, $E_i\in \{0,1\}$ for $i\geq 1$, $I_j\in \{0,1\}$ and $S_r\in \N_0$.
		\end{remark}
		
	\subsection{Computation of \texorpdfstring{$\underline{K}^W\wedge H_W\Z$}{kW homology of Hw}}
		
		We now want to upgrade the left side of $H\Z/2\wedge H_W\Z$ from $H\Z/2$ to $\underline{K}^W$ through $\underline{k}^M$. We proceed similarly to our computation of $H\Z/2\wedge \underline{K}^W$, but now we apply homology on the other side.
		
		Taking $(-)_{**}H_W\Z$ of (\ref{cofiber_kW_k}) we get a diagram with long exact row
		\begin{equation}\label{*d_longsequence}
			\begin{tikzcd}
				\cdots \arrow[r,"\eta"] & \underline{K}^W_{**}H_W\Z \arrow[r,"{\underline{r}_*^L}"] & \underline{k}^M_{**}H_W\Z \arrow[r, "{\underline{\partial}_*^L}"] \arrow[dr,"{d_*^L}"'] & \Sigma^{2,1} \underline{K}^W_{**}H_W\Z \arrow[r, "\eta"]  \arrow[d, "{\underline{r}_*^L}"] & \cdots \\
				& & & \Sigma^{2,1} \underline{k}^M_{**}H_W\Z.
			\end{tikzcd}
		\end{equation}
		The image of ${\underline{\partial}_*^L}$ is $\eta$-torsion. While we cannot directly access it, we will show with the help of the ${d_*^L}$ that there is no higher $\eta$-torsion. We note that ${d_*^L}\circ {d_*^L}=\underline{r}_*^L\circ\underline{\partial}_*^L\circ \underline{r}_*^L\circ\underline{\partial}_*^L=\underline{r}_*^L\circ 0\circ\underline{\partial}_*^L=0$. One may then guess that the homology of ${d_*^L}$ is a shadow of the $\underline{K}^W_*$-free part of $\underline{K}^W\wedge H_W\Z$. Luckily, it turns out that this is precisely the case. Similarly to $d^R_*$, ${d_*^L}$ is dictated by its lift ${(Sq^2)_*^L}$ to $H\Z/2_{**}H_W\Z\hra H\Z/2_{**}H\Z/2$ since ${\pi_*^L}$ is surjective by (\ref{kMHW}). 
		\begin{notation}\label{index_notation}
			Similar to Section \ref{sec_steenrod}, we let $I=(I_2,I_3,\ldots)$ be a sequence with $I_i\in \{0,1\}$ and almost all $I_i=0$. We denote the zero sequence by $\emptyset$ and denote by $e_i$ the sequence with a single $1$ at the $i$-th position. If not otherwise specified, $I$ and $J$ will in what follows always stand for a sequence starting at $I_2$ and $J_2$ respectively, i.e. $I=(I_2,I_3,\ldots)$ and $J=(J_2,J_3,\ldots)$. We will call such an index set \emph{a binary index set starting at index 2}. We will often think of these index sets as sets of non-zero indices, i.e. we identify $(I_2,I_3,\ldots)=(0,1,0,1,0,0,\ldots)$ with the set $\{3,5\}$. In this sense we may use set operations $\amalg,\cup, \cap, \Delta$, etc., and also the addition and subtraction operations of sequences. 
		\end{notation}
		We compute in $H\Z/2_{**}H\Z/2$ using (\ref{hopf_algebroid_structure}), Proposition \ref{*alphaandalpha*formula} and Proposition \ref{Sq2_on_products}, and define
		\begin{gather*}
			{(Sq^2)_*^L}(\tau_1)=\tau_0,\hspace{1cm} {(Sq^2)_*^L}(\tau_i)=0, \hspace{0.5cm}\text{for }i\neq 1,\\
			{(Sq^2)_*^L}(\overline{\xi}(I))=\sum_{i\in I}\overline{\xi_{i-1}}^2\overline{\xi}(I\setminus i)=:c(I), \hspace{0.7cm} {(Sq^2)_*^L}(\tau_1 \overline{\xi(I)})=\tau_0\overline{\xi}(I)+\tau_1c(I):=c_1(I).		\end{gather*}
		Note $c(e_j)=\overline{\xi_j}^2$ and $c(e_1)=1$ and $c_1(\emptyset)=\tau_0$. Let us recall that the degrees are
		\begin{gather*}
			|\tau_i|=(2^{i+1} -1,2^i -1), \hspace{1cm} |\overline{\xi_i}|=(2^{i+1}-2,2^{i}-1),\\
			|c(I)|=\sum_{i\in I} (2^{j+2}-4,2^{j+1}-2), \hspace{1cm} |c_1(I)|=\sum_{i\in I} (2^{j+2}-1,2^{j+1}-1).
		\end{gather*}
		\begin{lemma}\label{c_relations}
			Let $I,J$ be binary index sets starting at index 2. In $H\Z/2_{**}H_W\Z\subset H\Z/2_{**}H\Z/2$ we have the relations
			\begin{align*}
				c_1(\emptyset)^4&=\tau_0^4=\rho^3(\tau_2+c_1(e_2))+\rho^2\tau\overline{\xi_2}+\tau^2c(e_2),\\
				c(I)c(J)&=\sum_{i\in I\cap J}c(e_i)c(I\Delta J\amalg i)\prod_{j\in I\cap J\setminus i}c(e_{j+1}) +\sum_{i\in I\setminus J}c(e_i)c(I\Delta J\setminus i)\prod_{j\in I\cap J}c(e_{j+1}),\\
				c(I)c_1(J)&=\sum_{i\in I\cap J}c(e_i)c_1(I\Delta J\amalg i)\prod_{j\in I\cap J\setminus i}c(e_{j+1}) +\sum_{i\in I\setminus J}c(e_i)c_1(I\Delta J\setminus i)\prod_{j\in I\cap J}c(e_{j+1}),\\
				c_1(I)c_1(J)&=\tau_0c_1(I\Delta J)\prod_{r\in I\cap J}c(e_{r+1})+(\rho\tau_2+\xi_2\tau)c(I)c(J).
			\end{align*}
		\end{lemma}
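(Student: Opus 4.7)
The strategy is to verify all four identities inside the ambient ring $H\Z/2_{**}H\Z/2$ and transfer the conclusion through the injection $r^R_*$ of Lemma~\ref{H**HW_square}. The standing tools I would rely on are: (i) the defining relations $\tau_i^2 = \rho\tau_{i+1} + \xi_{i+1}\tau$; (ii) the right-action convention $x\tau = (\tau+\rho\tau_0)x$; (iii) characteristic~$2$ cancellations; (iv) the conjugation formulas (\ref{hopf_algebroid_structure}), in particular $\xi_1 = \overline{\xi_1}$ and $\xi_2 = \overline{\xi_2}+\overline{\xi_1}^3$; and (v) for binary index sets the multiset identity
\[
\overline{\xi}(I)\overline{\xi}(J) \;=\; \overline{\xi}(I\Delta J)\prod_{k\in I\cap J}\overline{\xi_k}^2 \;=\; \overline{\xi}(I\Delta J)\prod_{k\in I\cap J}c(e_{k+1}).
\]

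For formula (1), the plan is to start from $\tau_0^2 = \rho\tau_1 + \xi_1\tau$ and square. Both summands have even first bidegree, so they commute and the cross term vanishes in characteristic~$2$, giving $\tau_0^4 = \rho^2\tau_1^2 + (\xi_1\tau)^2$. Expanding $(\xi_1\tau)^2$ via the bimodule rule, applying $\tau_1^2 = \rho\tau_2 + \xi_2\tau$ to the first term, converting $\xi_2 = \overline{\xi_2}+\overline{\xi_1}^3$, and collecting should leave precisely $\rho^3\tau_2 + \rho^3(\tau_0\overline{\xi_2} + \tau_1\overline{\xi_1}^2) + \rho^2\tau\overline{\xi_2} + \tau^2\overline{\xi_1}^2$, which matches the stated form once $c(e_2) = \overline{\xi_1}^2$ and $c_1(e_2) = \tau_0\overline{\xi_2}+\tau_1\overline{\xi_1}^2$ are inserted.

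For formulas (2)--(4) I would expand both sides as polynomials in the $\overline{\xi_i}, \tau_0, \tau_1$ and match term by term. For (2), writing
\[
c(I)c(J) = \sum_{(i,j)\in I\times J}\overline{\xi_{i-1}}^2\overline{\xi_{j-1}}^2\overline{\xi}(I+J-e_i-e_j)
\]
and observing that pairs with $i\neq j$ both in $I\cap J$ come in symmetric pairs $(i,j)\leftrightarrow(j,i)$ that cancel in characteristic~$2$ leaves four surviving families: $i=j\in I\cap J$, $(i,j)\in(I\cap J)\times(J\setminus I)$, $(i,j)\in(I\setminus J)\times(I\cap J)$, and $(i,j)\in(I\setminus J)\times(J\setminus I)$. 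On the right-hand side the first sum (over $a\in I\cap J$) splits by whether the inner index $k$ of $c(I\Delta J\amalg a)$ equals $a$ or lies in $I\Delta J = (I\setminus J)\amalg(J\setminus I)$ and so reproduces the first three families; the second sum (over $b\in I\setminus J$) provides the fourth, with the would-be $(b,k)\in(I\setminus J)^2$ contributions cancelling by symmetry in $b,k$. Formula (3) is obtained by applying the same template to $c(I)c_1(J) = \tau_0 c(I)\overline{\xi}(J) + \tau_1 c(I)c(J)$: the $\tau_1$-piece reuses (2), and the $\tau_0$-piece follows from the identity $c(I)\overline{\xi}(J) = \sum_{i\in I}\overline{\xi_{i-1}}^2\overline{\xi}(I+J-e_i)$ with the same casework. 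For (4), expanding yields
\[
c_1(I)c_1(J) = \tau_0^2\overline{\xi}(I)\overline{\xi}(J) + \tau_0\tau_1\bigl(\overline{\xi}(I)c(J)+c(I)\overline{\xi}(J)\bigr) + \tau_1^2 c(I)c(J);
\]
the last summand becomes $(\rho\tau_2+\xi_2\tau)c(I)c(J)$ by (i), the $\tau_0^2$-summand matches via (v), and the remaining $\tau_0\tau_1$-terms reduce to the identity $c(I\Delta J)\prod_{r\in I\cap J}\overline{\xi_r}^2 = \overline{\xi}(I)c(J)+c(I)\overline{\xi}(J)$, which falls out by rewriting both sides as $\sum_{i\in I\Delta J}\overline{\xi_{i-1}}^2\overline{\xi}(I+J-e_i)$ after the characteristic-$2$ doubling cancellation on $I\cap J$.

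The main obstacle I anticipate is the combinatorial bookkeeping in (2): the left-hand side is manifestly symmetric in $(I,J)$ while the stated right-hand side is not, and the asymmetric second sum only delivers the correct answer because of the characteristic~$2$ cancellation of the pairs $(i,j)\leftrightarrow(j,i)$ in $I\cap J$ on the left together with the analogous $(b,k)\leftrightarrow(k,b)$ cancellation in $(I\setminus J)^2$ on the right. Correctly tracking the multiset arithmetic on the quadruple $(I\cap J, I\setminus J, J\setminus I, I\Delta J)$ is the most error-prone step; once this case analysis is secured, (3) and (4) follow by essentially the same combinatorial template, supplemented only by tracking $\tau_0, \tau_1$ factors and invoking the defining relation for $\tau_1^2$.
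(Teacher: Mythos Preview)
Your proposal is correct and follows essentially the same approach as the paper: a direct expansion of the double sum $c(I)c(J)=\sum_{i\in I,j\in J}\overline{\xi_{i-1}}^2\overline{\xi_{j-1}}^2\overline{\xi}(I\setminus i)\overline{\xi}(J\setminus j)$, followed by the characteristic~$2$ symmetry cancellations on $I\cap J$ and $I\setminus J$ to arrive at the stated form, with the other three identities handled analogously. The paper only spells out formula~(2) in detail and dispatches the others with ``follows by similar arguments,'' so your more explicit treatment of~(3) and~(4) (in particular the reduction to $c(I\Delta J)\prod_r\overline{\xi_r}^2=\overline{\xi}(I)c(J)+c(I)\overline{\xi}(J)$) is a welcome elaboration of the same strategy.
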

		\begin{proof}
			The formula for $c_1(\emptyset)^4$ is simply a restatement of the relation in Proposition \ref{H_H_W}. We prove the second formula. The others follow by similar arguments.
			\begin{align*}
				c(I)c(J)&=\sum_{i\in I,j\in J}\overline{\xi_{i-1}}^2\overline{\xi_{j-1}}^2\overline{\xi}(I\setminus i)\overline{\xi}(J\setminus j)=\sum_{i\in I\cap J}\left[\sum_{j\in J\setminus I\amalg i}+\sum_{j\in I\cap J\setminus i}\right]+\sum_{j\in I\cap J}\sum_{i\in I\setminus J}+\sum_{j\in J\setminus I}\sum_{i\in I\setminus J}\\
				&=\sum_{i\in I\cap J}\sum_{j\in I\Delta J \amalg i}+\sum_{i\in I\setminus J}\sum_{j\in J\setminus I}+\sum_{i\in I\setminus J}\sum_{j\in I\setminus J\setminus i}=\sum_{i\in I\cap J}\sum_{j\in I\Delta J \amalg i}+\sum_{i\in I\setminus J}\sum_{j\in I\Delta J\setminus i}\\
				&=\sum_{i\in I \cap J}\overline{\xi_{i-1}}^2\sum_{j\in I\Delta J \amalg i}\overline{\xi_{j-1}}^2\overline{\xi}(I\Delta J\amalg i\setminus j)\overline{\xi}(I\cap J\setminus i)^2+\sum_{i\in I \setminus J}\overline{\xi_{i-1}}^2\sum_{j\in I\Delta J \setminus i}\overline{\xi_{j-1}}^2\overline{\xi}(I\Delta J\setminus i)\overline{\xi}(I\cap J)^2.
			\end{align*}
			In the third equation we use symmetry to remove the double sum $0=\sum_{i\in I\cap J}\sum_{j\in I\cap J \setminus i}$ as well as to add $0=\sum_{i\in I\setminus J}\sum_{j\in I\setminus J\setminus i}$.
		\end{proof}
		\begin{remark}
			We will often use the notation $c_1(\emptyset)$ and $\tau_0$ interchangeably. In the above we also used $\xi_2\tau$. It can be expressed as $\xi_2\tau=\tau\overline{\xi_2}+\tau_0^2c(e_2)+\rho c_1(e_2)$.
		\end{remark}
		
		The map ${d_*^L}$ is a derivation on $\underline{k}^M_{**}H_W\Z$, thanks to Proposition \ref{Sq2_on_products}. Therefore, its kernel is a subring of $\underline{k}^M_{**}H_W\Z$ and its image is an ideal inside it. The elements we have just computed are not all elements ${d_*^L}$ sends to zero. For example, let $r\in k^M_*(k)$ be $\rho^3$-torsion, then $r{d_*^L}(\tau_0^3\tau_1)=r\tau_0^4=r\rho^3(c_1(e_1)+\tau_2)=0$. Define $b:=\tau_0^3\tau_1$ if there exists $\rho$-torsion in $k^M_*(k)$ and let $b=0$ if $k^M_*(k)$ is $\rho$-torsion free. The degree of $b$ is $|b|=(6,1)$.
		\begin{lemma}\label{image_kernel_of_d}
			Let the base field $k$ satisfy $\rho^3=0$. Then \begin{align*}
				\ker({d_*^L})&=\underline{k}^M_{**}[b,\tau_2,\tau_3,\ldots,c(I),c_1(I)]/(\ldots)\hra \underline{k}^M_{**}H_W\Z,\\
				\im({d_*^L})&=(c(I),c_1(I))\hra \ker({d_*^L}),\\
				H({d_*^L})&=\underline{k}^M_{**}[\tau_2,b,\tau_3,\tau_4,\ldots]/(b^2,\tau_j^2-\rho\tau_{j+1}).
			\end{align*}
			where $I$ is a binary index set starting at index $2$. The ideal $(\ldots)$ is generated by the relations in Lemma \ref{c_relations} mod $\tau$ as well as
			\begin{gather*}
				c(\emptyset)=1,\hspace{1cm} c_1(\emptyset)=\tau_0,\hspace{1cm} \tau_0^4=0,\hspace{1cm}
				\tau_{j}^2=\rho\tau_{j+1}+\xi_{j+1}\tau,\hspace{0.5cm}\text{for }j\geq 2,\\
				b^2=b\tau_0=0,\hspace{1cm}bc(I)=\tau_0^3 c_1(I),\hspace{1cm} bc_1(I)=\rho\tau_0^3(c_1(e_2)+\tau_2)c(I),
			\end{gather*}
		\end{lemma}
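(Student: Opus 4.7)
The plan is to exploit that $d_*^L$ is a graded derivation on $\underline{k}^M_{**}H_W\Z$. This follows from Proposition \ref{Sq2_on_products}: the correction term $\tau(Sq^1)_*^L(x)(Sq^1)_*^L(y)$ in the Cartan formula for $(Sq^2)_*^L$ vanishes after projecting modulo $\tau$. Consequently $\ker(d_*^L)$ is an $\underline{k}^M_{**}$-subalgebra and $\im(d_*^L)\subset \ker(d_*^L)$ is an ideal. Using the derivation rule together with the prescribed values of $d_*^L$ on the generators $\tau_i$ and $\overline{\xi_j}$, one can compute $d_*^L$ on each basis monomial $\tau(E)\overline{\xi}(I+2S)$ from Remark \ref{kMHW_is_psf}. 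The factors $\tau_0,\tau_j$ ($j\geq 2$) and $\overline{\xi_r}^{2S_r}$ are killed (the squares because we are in characteristic $2$ and $d_*^L$ is a derivation), so the whole monomial yields
$$d_*^L\bigl(\tau_0^{E_0}\tau_1^{E_1}\tau(E_{\geq 2})\overline{\xi}(I)\overline{\xi}(2S)\bigr)=\tau_0^{E_0}\tau(E_{\geq 2})\overline{\xi}(2S)\cdot\begin{cases}c(I)&E_1=0,\\ c_1(I)&E_1=1.\end{cases}$$

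From this formula most of the claims follow quickly. The image is the $\underline{k}^M_{**}$-ideal generated by $c(I)$ and $c_1(I)$ (with $c_1(\emptyset)=\tau_0$), as stated. The kernel contains each $c(I)$ and $c_1(I)$ because $d_*^L\circ d_*^L=0$, and contains $\tau_j$ for $j\geq 2$ trivially. The assumption $\rho^3=0$ enters exactly here: by Proposition \ref{kM_H_W} it forces $\tau_0^4=0$ in $\underline{k}^M_{**}H_W\Z$, hence $d_*^L(b)=d_*^L(\tau_0^3\tau_1)=\tau_0^4=0$, so $b\in\ker(d_*^L)$. The claimed relations are then routine to verify: those of Lemma \ref{c_relations} hold automatically modulo $\tau$, while $c_1(\emptyset)=\tau_0$ and $c(\emptyset)=1$ are interpretations of the corresponding formal symbols, $b^2=\tau_0^6\tau_1^2=0$ and $b\tau_0=\tau_0^4\tau_1=0$ both follow from $\tau_0^4=0$, and $bc(I)=\tau_0^3\tau_1 c(I)=\tau_0^3(c_1(I)-\tau_0\overline{\xi}(I))=\tau_0^3 c_1(I)$; the formula for $bc_1(I)$ is obtained analogously, expanding $\tau_1^2$ via Proposition \ref{kM_H_W} and collecting terms with $\tau_0^4=0$.

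The main obstacle is showing that the presented polynomial quotient ring maps \emph{isomorphically} onto $\ker(d_*^L)$. Surjectivity requires a bookkeeping argument: one classifies the basis monomials on which $d_*^L$ vanishes (either because $E_1=0$ and $I=\emptyset$, or because the prefix $\tau_0^{E_0+1}$ kills the result via $\tau_0^4=0$, or through cancellations among monomials with matching images) and identifies each such kernel element with a product of the listed generators in the normal form dictated by the relations. Injectivity then follows from a bidegree count using the basis of Remark \ref{kMHW_is_psf}. This combinatorial step is tedious but mechanical; the only genuine subtlety is tracking how the monomials of the form $\tau_0^{E_0}\tau_1\tau(E_{\geq 2})\overline{\xi}(2S)$ (with $E_0=3$) are absorbed into the $b$-generator rather than producing further kernel classes.

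Finally, for $H(d_*^L)$ one passes to the quotient $\ker(d_*^L)/\im(d_*^L)$. There $\tau_0=c_1(\emptyset)$ vanishes as an image, and $c(I),c_1(I)$ vanish for $I\neq\emptyset$. What survives is generated by $b$ and the $\tau_j$ for $j\geq 2$. The relation $b^2=0$ is already in the kernel, and $\tau_j^2=\rho\tau_{j+1}$ in the homology follows by reducing the formula $\tau_j^2=\rho\tau_{j+1}+\xi_j^2(\tau_0^2-\rho\tau_1)+\rho\tau_0\sum\xi_{j+1-i}^{2^i}\overline{\xi_i}$ of Proposition \ref{kM_H_W} modulo $\tau_0$ and modulo the image, giving the claimed presentation.
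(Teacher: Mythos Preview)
Your overall strategy matches the paper's exactly: exploit that $d_*^L$ is a derivation, compute it on the basis monomials $\tau(E)\overline{\xi}(I+2S)$, read off the image, and verify the listed relations. The issue is that you have relegated the actual content of the lemma to a sentence.

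The determination of $\ker(d_*^L)$ is not ``tedious but mechanical.'' Your sketch speaks of classifying \emph{monomials} on which $d_*^L$ vanishes, but kernel elements are linear combinations in which individual monomials need not lie in the kernel. The paper isolates the submodule $V$ spanned by $\overline{\xi}(I+2S)$ with $I+2S\neq\emptyset$ and proves that $\ker(d_*^L|_V)=\im(d_*^L|_V)$ by a genuine combinatorial argument: given $\sum a_{I,S}\overline{\xi}(I+2S)$ in the kernel, one tracks which pairs $(I,S)$ can cancel a fixed term $\overline{\xi_{i-1}}^2\overline{\xi}(I\setminus i)\overline{\xi}^2(S)$ and shows the cancelling terms assemble into $a_{I',S'}c(I'\amalg(s_m+1))\overline{\xi}(S'-e_{s_m})$, peeling off one $c(-)$ at a time. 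Only after this does the casework on $E_0\in\{0,1,2,3\}$ (which you allude to) finish the job. A ``bidegree count'' does not substitute for this step, since many bidegrees contain both image elements and the new generators $b,\tau_j$.

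There is also a smaller gap in your argument for $H(d_*^L)$. Saying ``reduce modulo $\tau_0$'' is not legitimate: the image is an ideal only in $\ker(d_*^L)$, not in all of $\underline{k}^M_{**}H_W\Z$, so $\tau_0\in\im$ does not force $\tau_0\cdot x\in\im$ unless $x\in\ker$. In the expression $\tau_j^2-\rho\tau_{j+1}=\xi_j^2(\tau_0^2-\rho\tau_1)+\rho\tau_0\sum\xi_{j+1-i}^{2^i}\overline{\xi_i}$, the individual summands (e.g.\ $\rho\xi_j^2\tau_1$) are not in the kernel, so you cannot reduce them separately. The paper instead exhibits $\xi_{j+1}\tau$ directly as $d_*^L(\xi_1\xi_{j+1}\tau)$, after checking that $\xi_1\xi_{j+1}\tau\in\underline{k}^M_{**}H_W\Z$; this one-line computation is what you are missing.
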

		\begin{proof}
			We begin by finding generators of the image. We have seen above that $\tau_i$ lies in $\ker({d_*^L})$ when $i\neq 1$. Since ${d_*^L}$ is a derivation, all squares lie in its kernel as well. Let $I$ be a binary index set starting at index 2 and $S=(S_1,S_2,\ldots)$ a finite index set with $S_i\in \N_0$. It follows that for a basis monomial we have
			$${d_*^L}(\tau(E)\overline{\xi}(I+2S))=\tau(E\setminus 1)\overline{\xi}^2(S){d_*^L}(\tau_1^{E_1}\overline{\xi}(I)).$$
			It thus suffices to show that we can write ${d_*^L}(\tau_1^{E_1}\overline{\xi}(I))$ in the claimed form. This is clear since it is either $c(I)$ or $c_1(I)$. 
			
			We now establish the relations. The relation for $\tau_0^4$ follows because $\rho^3=0$. This immediately implies $b^2=b\tau_0=0$, $bc(I)=\tau_0^3c_1(I)$. Furthermore,
			$$bc_1(I)=\tau_0^3(\rho\tau_2+\xi_2\tau)c(I)=\tau_0^3(\rho\tau_2+\rho c_1(e_2)+\tau_0^2c(e_2))c(I)=\rho\tau_0^3(\tau_2+c_1(e_2))c(I).$$
			
			To see that the kernel is given as the claimed algebra we first note that all the generators are in the kernel of ${d_*^L}$. Now we consider linear combinations of basis elements $\tau(E\setminus 1)\overline{\xi}(I+2S)$. We will show that any such linear combination in the kernel must already be a polynomial in $c(I),\tau(E\setminus 1)$. Since the submodule $\langle \tau(E\setminus 1)\rangle$ is free and ${d_*^L}=id\otimes {d_*^L}$ on $V:=\langle \tau(E\setminus 1)\overline{\xi}(I+2S\neq \emptyset)\rangle\cong\langle \tau(E\setminus 1)\rangle\otimes\langle \overline{\xi}(I+2S\neq \emptyset)\rangle$, we can reduce to linear combinations of $\overline{\xi}(I+2S)$ with $I+2S\neq \emptyset$. Assume $$0={d_*^L}(\sum_{I,S\neq \emptyset}a_{I,S\neq \emptyset}\overline{\xi}(I+2S))=\sum_{I,S\neq \emptyset}a_{I,S}\sum_{i\in I}\overline{\xi_{i-1}}^2\overline{\xi}(I\setminus i)\overline{\xi}^2(S).$$
			Since $\overline{\xi_{i-1}}^2\overline{\xi}(I\setminus i)\overline{\xi}^2(S)$ are linearly independent for varying $i\in I$ and fixed $(I,S)$, for each $i\in I$ there must be another pair $(J_i,S_i)\neq (I,S)$ with index $j_i\in J_i$ canceling $a_{I,S}\overline{\xi_{i-1}}^2\overline{\xi}(I\setminus i)\overline{\xi}^2(S)$. In that case $a_{J_i,S_i}=a_{I,S}$ and $\overline{\xi_{j_i-1}}^2\overline{\xi}(J_i\setminus j_i)\overline{\xi}^2(S_i)=\overline{\xi_{i-1}}^2\overline{\xi}(I\setminus i)\overline{\xi}^2(S)$. Terms $\overline{\xi_{j_i-1}}^2\overline{\xi}(J_i\setminus j_i)\overline{\xi}^2(S_i)$ can only come from $c(I\setminus i \amalg (s+1))\overline{\xi}^2(S- e_s +e_{i-1})$ for some $s\in S$ such that $s+1\notin I$.  Fix $I',S'$ with $a_{I',S'}\neq 0$. Let $I'=\coprod_m D_m$ be a finite partition of $I'$ and let $s_m\in S'$ be distinct elements with $s_m+1\notin I'$. Each $D_m$ is responsible for canceling terms via $c(I'\setminus i \amalg (s_m+1))\overline{\xi}^2(S'- e_{s_m} +e_{i-1})$. We have 
			\begin{align}\label{temp4} 
				\begin{split}
					\sum_m\sum_{i\in D_m}& c(I'\setminus i \amalg (s_m+1))\overline{\xi}^2(S'- e_{s_m} +e_{i-1})\\
					&=\sum_m\sum_{i\in D_m}\sum_{j\in (I'\setminus i)\amalg (s_m+1)}\overline{\xi}_{j-1}^2\overline{\xi}((I'\setminus i\amalg (s_m+1))\setminus j)\overline{\xi}^2(S'-e_{s_m}+e_{i-1})\\
					&= c(I')\overline{\xi}^2(S')+	\sum_m\sum_{i\in D_m}\sum_{j\in I'\setminus i}\overline{\xi}_{j-1}^2\overline{\xi}((I'\setminus i\amalg (s_m+1))\setminus j)\overline{\xi}^2(S'-e_{s_m}+e_{i-1}).
				\end{split}
			\end{align}
			The terms $\overline{\xi}_{j-1}^2\overline{\xi}((I'\setminus i\amalg (s_m+1))\setminus j)\overline{\xi}^2(S'-e_{s_m}+e_{i-1})$ are only linearly dependent for pairs $(i,j)$, $(j,i)$ since the term with singular powers is different otherwise. It follows that the $\sum_m\sum_{i\in D_m}\sum_{j\in I'\setminus i}$ term in the third line of (\ref{temp4}) only vanishes if $D_m=I$ for some $m$. We conclude 
			$$\sum_{I,S\neq \emptyset}a_{I,S\neq \emptyset}\overline{\xi}(I+2S)=a_{I',S'}c(I'\amalg (s_m+1))\overline{\xi}(S'-e_{s_m})+R,$$
			where $R$ now contains less summands with $a_{I,S}\neq 0$. Inductively we remove all terms until the claim follows, i.e. $\ker({d_*^L}_{|_V})=\im({d_*^L}_{|_V})$. 
			
			We now turn to a general element in the kernel of ${d_*^L}$. In the following all sums go over $E, I, S$ as in Remark \ref{kMHW_is_psf} and we suppress these index sets in the coefficients of the basis elements
			\begin{align*}
				\sum_{E,I,S}a_{E,I,S}{d_*^L}(\tau(E)\overline{\xi}(I+2S))&=\sum_{E,I,S}a_{E,I,S}{d_*^L}(\tau_1^{E_1}\overline{\xi}(I))\tau(E\setminus 1)\overline{\xi}(2S)\\
				&=\sum_{E_0=0}\left[a_{E_0=0,E_1=0}c(I)+a_{E_0=0,E_1=1}c_1(I)\right]\tau(E\setminus \{0,1\})\overline{\xi}(2S)\\
				& +\sum_{E_0=1}\left[a_{E_0=1,E_1=0}\tau_0c(I)+a_{E_0=1,E_1=1}\tau_0c_1(I)\right]\tau(E\setminus \{0,1\})\overline{\xi}(2S)\\
				& +\sum_{E_0=2}\left[a_{E_0=2,E_1=0}\tau_0^2c(I)+a_{E_0=2,E_1=1}\tau_0^2c_1(I)\right]\tau(E\setminus \{0,1\})\overline{\xi}(2S)\\
				& +\sum_{E_0=3}\left[a_{E_0=3,E_1=0}\tau_0^3c(I)+a_{E_0=3,E_1=1}\tau_0^3\tau_1c(I)\right]\tau(E\setminus \{0,1\})\overline{\xi}(2S).
			\end{align*}
			We assume that the above sum is zero. In the last row we used the relation $\tau_0^3c_1(I)=b c(I)$. Note that the very first sum $\sum_{E_0=0}a_{E_0=0,E_1=0}c(I)$ contains the only elements not divisible by $\tau_0$ or $\tau_1$. It is thus independent of all the others. We have already shown that any sum of that form comes from an element in the image of ${d_*^L}$. We may thus assume that $a_{E_0=0,E_1=0}=0$. Similarly, the very last sum contains the only elements divisible by $\tau_0^3\tau_1$. By the same logic we may assume $a_{E_0=3,E_1=1}=0$. The elements $\tau_0^2c_1(I)=\tau_0^3\overline{\xi}(I)+\tau_0^2\tau_1c(I)$ are the only ones that contain summands divisible by $\tau_0^2\tau_1$. With the same argument we deduce $a_{E_0=2,E_1=1}=0$ which then leads to $a_{E_0=3,E_1=0}=0$. We repeat the argument with elements divisible by $\tau_0\tau_1$ to deduce $a_{E_0=1,E_1=1}=0$. This leads to $a_{E_0=2,E_1=0}=0$. The last two vanish by the same reason. This shows that the kernel is generated by elements as claimed. The relations we have established reduce every possible relation to terms of low powers in the generators. One can check that the remaining monomials are linearly independent in $k^M_{**}H_W\Z$ and thus generate the ideal $(\ldots)$.
			
			Taking quotients we see immediately that $H({d_*^L})$ is generated as claimed. We have $\xi_1\xi_i\tau\in H\Z/2_{**}H_W\Z$ and thus also in $\underline{k}^M_{**}H_W\Z$. Therefore, $\xi_{j}\tau={d_*^L}(\xi_1\xi_j\tau)$, for $j\geq 2$, is in the image of ${d_*^L}$. This shows the relation $\tau_j^2-\rho\tau_{j+1}=0$ in $H({d_*^L})$. The relation $b^2=0$ already holds before taking quotients. The last relation $\rho^3\tau_2$ follows from the relation of $\tau_0^4$. Since all monomials with no higher powers in $b$ and $\tau_{j}$, for $j\geq 2$, are linearly independent in $\underline{k}^M_{**}H_W\Z$, these are all relations in $H({d_*^L})$. 
		\end{proof}
		\begin{remark}
			The condition $\rho^3=0$ holds for any field of odd characteristic. It further holds for fields that are extensions of totally imaginary number fields of transcendence degree at most 1, by \cite[II.4.4, Prop. 11 and 13]{serre_galoiscohomology} and the resolution of the Milnor conjecture. We note that all fields with $\rho^3=0$ are nonreal \cite[Cor. 6.20]{lamintrotoquadforms}.
		\end{remark}

		In the next two Propositions our base field will be assumed to satisfy $\rho^3=0$ and $\vcd_2(k)<\infty$. In particular $k$ is nonreal. By Corollary \ref{nonrealfieldsdontneedcompletion} and Proposition \ref{ourpsectraareveff}, all combinations of smash products of the spectra $H_W,H\Z/2$, $\underline{K}^W$, $\underline{k}^M$ are $\eta$-complete under this assumption.
	
		\begin{proposition}\label{no_higher_eta-torsion}
			Let the base field $k$ satisfy $\vcd_2(k)<\infty$, $\rho^3=0$ and $k^M_4(k)=0$. Then we have $\im({\underline{r}_*^L})=\ker({d_*^L})$ and $\underline{K}^W_{**} H_W\Z$ has no higher $\eta$-torsion.
		\end{proposition}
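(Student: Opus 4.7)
The plan is to reduce both assertions to a single claim and verify it using the $\eta$-completeness afforded by the hypotheses.

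Both conclusions are equivalent to $\underline{\partial}_*^L(\ker(d_*^L)) = 0$. Indeed, $\im(\underline{r}_*^L) \subseteq \ker(d_*^L)$ holds automatically since $d_*^L\circ \underline{r}_*^L = \underline{r}_*^L \circ \underline{\partial}_*^L \circ \underline{r}_*^L = 0$ in (\ref{*d_longsequence}), and the reverse inclusion is equivalent to this vanishing. An exact-sequence chase further identifies $\underline{\partial}_*^L(\ker(d_*^L)) = \im(\underline{\partial}_*^L)\cap\ker(\underline{r}_*^L) = \ker(\eta)\cap\im(\eta)$ in $A := \underline{K}^W_{**}H_W\Z$; the vanishing of this intersection is exactly the absence of higher $\eta$-torsion.

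By Lemma \ref{image_kernel_of_d}, the $\underline{k}^M_{**}$-algebra $\ker(d_*^L)$ is generated by $c(I), c_1(I), b$ and the $\tau_j$ for $j \geq 2$. The classes $c(I), c_1(I)$ lie in $\im(d_*^L) = \underline{r}_*^L(\im(\underline{\partial}_*^L)) \subseteq \im(\underline{r}_*^L) = \ker(\underline{\partial}_*^L)$, so they are automatically killed by $\underline{\partial}_*^L$. Since $\underline{r}_*^L$ is a ring homomorphism, $\ker(\underline{\partial}_*^L)$ is a subring, so it suffices to verify $\underline{\partial}_*^L(b) = 0$ and $\underline{\partial}_*^L(\tau_j) = 0$ for each $j \geq 2$. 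For such $g$, $\underline{\partial}_*^L(g) \in \ker(\underline{r}_*^L) = \im(\eta)$, so the question reduces to showing the resulting $\eta$-divisible class vanishes in the ambient $A$.

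The key step is to upgrade single $\eta$-divisibility to infinite $\eta$-divisibility, then invoke $\eta$-completeness. By Corollary \ref{nonrealfieldsdontneedcompletion} (applicable since $\rho^3 = 0$ makes $k$ nonreal and $\vcd_2(k) < \infty$), $\underline{K}^W\wedge H_W\Z$ is $\eta$-complete, hence $\bigcap_n \eta^n A = 0$. For $g = b$ the obstruction lies in $A_{4,0}$; iterating (\ref{*d_longsequence}) along the diagonal $p - q = 4$, the relation $\tau_0^4 = 0$ (forced by $\rho^3 = 0$) together with the trivial vanishing of $(\underline{k}^M_{**}H_W\Z)_{p, q}$ for $q < 0$ gives $(\underline{k}^M_{**}H_W\Z)_{4-n, -n} = 0$ for every $n \geq 0$. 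By exactness each $A_{4-n, -n}$ then equals $\im(\eta)$, so $\underline{\partial}_*^L(b) \in \bigcap_n \eta^n A = 0$. For each $\tau_j$ an analogous argument applies along the line $p - q = 2^j - 1$. The main obstacle is this finer degree bookkeeping: along that line $\underline{k}^M_{**}H_W\Z$ is in general nontrivial, and one must exploit the explicit relations from Lemmas \ref{c_relations} and \ref{image_kernel_of_d} together with the hypothesis $k_4^M(k) = 0$ (which kills the relevant scalar contributions from $\underline{K}^W_{**}$) to verify that the specific element $\underline{\partial}_*^L(\tau_j)$ — rather than the ambient group — remains $\eta$-divisible at each iteration, so that $\eta$-completeness again forces it to be zero.
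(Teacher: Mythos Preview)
Your overall strategy matches the paper's: reduce to showing that the generators $b$ and $\tau_j$ ($j\geq 2$) of $H(d_*^L)$ lie in $\im(\underline{r}_*^L)$, and then argue by $\eta$-separation. The execution, however, contains a genuine error and a gap, and you have inverted the roles of $b$ and $\tau_j$.

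The error is your claim that $(\underline{k}^M_{**}H_W\Z)_{p,q}=0$ for $q<0$, on which your treatment of $b$ rests. This is false: the coefficient ring $\underline{k}^M_{**}$ lives on the diagonal with $\underline{k}^M_{-n,-n}=k^M_n(k)$, so for instance $\rho^2\tau_0^2\tau_1$ sits in bidegree $(2,-1)$, and more generally $(\underline{k}^M_{**}H_W\Z)_{4-n,-n}$ receives contributions from $k^M_m(k)$ for small $m$ that the hypotheses do not kill. The ``trivial vanishing'' is therefore unavailable, and your argument for $b$ collapses.

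The gap is the case of $\tau_j$, which you leave as unspecified ``finer bookkeeping'' requiring $k^M_4(k)=0$. In fact this is the \emph{easier} case and needs no such hypothesis. The paper picks $k\geq 1$ maximal with $\underline{\partial}_*^L(\tau_j)=\eta^k a$ and $z:=\underline{r}_*^L(a)\neq 0$ (maximality exists by $\eta$-separation); then $z\in\ker(d_*^L)$ lies on the $(2^j-1)$-line. But $H(d_*^L)$ is generated by classes on lines $4,5,8,16,\ldots$, and every monomial lands on a line congruent to $0$ or $1$ mod $4$, whereas $2^j-1\equiv 3\pmod 4$. Hence $z\in\im(d_*^L)$, and one writes $a-\underline{\partial}_*^L(u)=\eta^m v$, contradicting maximality of $k$. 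It is precisely for $b$ that $k^M_4(k)=0$ is needed: there $z$ lies on the $4$-line, where $H(d_*^L)$ is spanned over $\underline{k}^M_{**}$ by $\tau_2$ alone, and comparing bidegrees forces the scalar in $z\equiv s\,\tau_2$ to lie in $k^M_{3+k}(k)\subseteq k^M_{\geq 4}(k)=0$.

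So the outline is salvageable, but you must replace the false vanishing claim with the actual line-by-line analysis of $H(d_*^L)$ from Lemma~\ref{image_kernel_of_d}, and relocate the use of $k^M_4(k)=0$ to the case $g=b$.
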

		\begin{proof}
			Recall (\ref{*d_longsequence}) 
			\[\begin{tikzcd}
			\cdots \arrow[r,"\eta"] & \underline{K}^W_{**} H_W\Z \arrow[r,"{\underline{r}_*^L}"] & \underline{k}^M_{**}H_W\Z \arrow[r, "{\underline{\partial}_*^L}"] \arrow[dr,"{d_*^L}"'] & \Sigma^{2,1} \underline{K}^W_{**} H_W\Z \arrow[r, "\eta"]  \arrow[d, "{\underline{r}_*^L}"] & \cdots \\
			& & & \Sigma^{2,1} \underline{k}^M_{**}H_W\Z.
			\end{tikzcd}\] 
			Let $x\in \underline{K}^W_{**} H_W\Z$ be higher $\eta$-torsion, i.e. $x={\underline{\partial}_*^L}(y)=a\eta^k$ with $k\geq 1$. Since $x$ is divisible by $\eta$ iff it lies in the kernel of ${\underline{r}_*^L}$, we know that $y\in \ker({d_*^L})$. Adding elements in $\im({d_*^L})\subset \ker({\underline{\partial}_*^L})$ to $y$ does not change their image under ${\underline{\partial}_*^L}$. Therefore, without loss of generality, $y$ is a polynomial in $b$ and $\tau_j$ for $j\geq 2$ by Lemma \ref{image_kernel_of_d}. Since ${\underline{r}_*^L}$ is a ring map and we want to show that $x=0$, i.e. $y$ is in the image of $\underline{r}_*$, it suffices to show that $b,\tau_j\in \im({\underline{r}_*^L})$ for $j\geq 2$. Indeed, if they all are in the image, then so is any polynomial in them. 
			
			Let us first assume $y=\tau_j$ with $j\geq 2$. Since $\underline{K}^W$ is a connective $E_\infty$-ring spectrum and $\underline{K}^W\wedge H_W$ is $\eta$-complete, each $i$-line $\pi_i(\underline{K}^W\wedge H_W)_*$ is $\eta$-complete as a $\underline{K}^W_*$-module by Theorem \ref{completion_and_htpy_groups}. In particular, every $i$-line $\pi_i(\underline{K}^W\wedge H_W)_*$ is separated with respect to $(\eta)$. Thus, we can pick $k\geq 1$ maximal with $x={\underline{\partial}_*^L}(\tau_j)=a\eta^k$, i.e. s.t. $z:={\underline{r}_*^L}(a)\neq 0$. Clearly, $z\in \ker({d_*^L})$. $\tau_j$ lives on the $2^j$-line, $b$ lives on the $2^2+1=5$-line and $z$ on the $2^j-1$-line. By degree reasons, $z$ maps to zero in $H({d_*^L})$, i.e. it lies in the image of ${d_*^L}$. Therefore, there are $u\in\underline{k}^M_{**}H_W\Z $ and $v\in \pi_{**}L$ such that $$z={\underline{r}_*^L}(a)={\underline{r}_*^L}({\underline{\partial}_*^L}(u)) \hspace{3mm}\leadsto \hspace{3mm} a-{\underline{\partial}_*^L}(u)\in \ker({\underline{r}_*^L}) \hspace{3mm} \longleftrightarrow \hspace{3mm}  a-{\underline{\partial}_*^L}(u)=\eta^mv.$$
			Here we again choose $m\geq 1$ maximal. Since the image of ${\underline{\partial}_*^L}$ is $\eta$-torsion, this implies:
			$$x=\eta^k a=\eta^k(\eta^m v+{\underline{\partial}_*^L}(u))=\eta^{k+m}v$$
			contradicting maximality of $k$. We conclude $x=0$ and thus $\tau_j\in \im({\underline{r}_*^L})$.
			
			Let now $y=b=\tau_0^3\tau_1$. In this case $x$ lives on the $4$-line, just like $\tau_2$. The only possibility is thus $z=s \tau_2$ for $s\in k^M_n(k)$. We have $|x|=(4,0)$ and $|\tau_2|=(7,3)$ and thus $n\geq 4$. By our assumptions on the base field, we must have $s=0$. We reduce $x=0$ and thus $b \in \im({\underline{r}_*^L})$ by the non-existence of infinitely $\eta$-divisible elements, as before. 
		\end{proof}
	
		We thus see that lifts of monomials in $\tau_j$, for $j\geq 2$, correspond to an $\eta$-torsion free part and $\eta$-torsion corresponds to lifts of $\im({d_*^L})$. We can lift $\eta$-torsion canonically and $\underline{r}_*^L$ maps it isomorphically to $\im(d_*^L)$. We will thus abuse notation and denote $\eta$-torsion in $\underline{K}^W_{**} H_W\Z$ by its image under $\underline{r}_*^L$. More precisely, we will denote $${\underline{\partial}_*^L}(\tau_1)=\tau_0=c_1(\emptyset), \hspace{1cm} {\underline{\partial}_*^L}(\overline{\xi}(I))=c(I) ,\hspace{1cm} {\underline{\partial}_*^L}(\tau_1\overline{\xi}(I))=c_1(I).$$ 
		Moreover, we choose ${\underline{r}_*^L}$ lifts $t_j$ of $\tau_j$ for $j\geq 2$ and $s$ of $b$. They exist by the previous Proposition. Since ${\underline{r}_*^L}$ is a left $\underline{k}^M_*$-algebra map, it is clear that the product of two lifts is a lift of the product. Using this property we have lifts of all elements in $\im({\underline{r}_*^L})$. Note that our choice of lifts $t_j$ and $s$ is in general not canonical and only up to the ideal generated by $\eta$.
	
		\begin{proposition}\label{K_W2H_Weta}
			Assumptions as in Proposition \ref{no_higher_eta-torsion}, we have an isomorphism of left $\underline{K}^W_{**}$-algebras $$\underline{K}^W_{**} H_W\Z \cong \underline{K}^W_{**} [s,t_2,t_3,\ldots,c(I),c_1(I)]/(\ldots),$$
			where $I$ is a binary index set starting at index $2$. The ideal of $\eta$-torsion is generated by $c(I),c_1(I)$. The ideal $(\ldots)$ is generated by the relations in Lemma \ref{c_relations} mod $\tau$ as well as the following
			\begin{gather*}
				c(\emptyset)=1,\hspace{1cm} c_1(\emptyset)=\tau_0,\hspace{1cm} \tau_0^4=0,\hspace{1cm}t_j^2=\rho t_{j+1}+\xi_{j+1}\tau+O(\eta),\\
				s^2=s\tau_0=0\hspace{1cm}bc(I)=\tau_0^3 c_1(I),\hspace{1cm} sc_1(I)=\rho\tau_0^3(c_1(e_2)+\tau_2)c(I).
			\end{gather*}
		\end{proposition}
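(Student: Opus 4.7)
The plan is to combine the long exact sequence \eqref{*d_longsequence} with Proposition~\ref{no_higher_eta-torsion} and the structural description in Lemma~\ref{image_kernel_of_d}. First I would extract the short exact sequence
\[
0\lra \im({d_*^L})\lra \underline{K}^W_{**}H_W\Z \xra{{\underline{r}_*^L}}\ker({d_*^L})\lra 0,
\]
where $\im({d_*^L})\subset\underline{k}^M_{**}H_W\Z$ is identified with $\ker(\eta)\subset\underline{K}^W_{**}H_W\Z$ via ${\underline{\partial}_*^L}$ and the first isomorphism theorem; by the second half of Proposition~\ref{no_higher_eta-torsion}, $\ker(\eta)$ exhausts the $\eta$-torsion. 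The generators $c(I), c_1(I), \tau_0=c_1(\emptyset)$ lift canonically from their namesakes in $\im({d_*^L})$, while the non-torsion generators $\tau_j$ ($j\geq 2$) and $b$ of $\ker({d_*^L})$ admit ${\underline{r}_*^L}$-lifts $t_j,s\in\underline{K}^W_{**}H_W\Z$ determined modulo $\eta\cdot\underline{K}^W_{**}H_W\Z$, which we fix once and for all.

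That these generators, together with scalars from $\underline{K}^W_{**}$, generate $\underline{K}^W_{**}H_W\Z$ follows by descent: given $x$, write ${\underline{r}_*^L}(x)$ as a polynomial in the generators of $\ker({d_*^L})$ via Lemma~\ref{image_kernel_of_d}; subtract the corresponding expression in $s,t_j,c(I),c_1(I)$ to land in $\eta\cdot\underline{K}^W_{**}H_W\Z$; iterate, and appeal to $\eta$-adic separatedness of each $i$-line, which holds by Theorem~\ref{completion_and_htpy_groups} together with the $\eta$-completeness furnished by Corollary~\ref{nonrealfieldsdontneedcompletion} and Proposition~\ref{ourpsectraareveff}. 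The relations among $c(I),c_1(I)$ mod $\tau$ from Lemma~\ref{c_relations}, as well as the identities $c(\emptyset)=1$, $c_1(\emptyset)=\tau_0$, $\tau_0^4=0$, $s^2=s\tau_0=0$, $sc(I)=\tau_0^3 c_1(I)$, and $sc_1(I)=\rho\tau_0^3(c_1(e_2)+\tau_2)c(I)$, are all $\eta$-torsion identities whose two sides match in $\im({d_*^L})$ by Lemma~\ref{image_kernel_of_d}; since ${\underline{r}_*^L}$ is injective on the $\eta$-torsion by exactness, no further check is needed. The relation $t_j^2=\rho t_{j+1}+\xi_{j+1}\tau+O(\eta)$ is the pull-back of the corresponding identity in $\ker({d_*^L})\subset\underline{k}^M_{**}H_W\Z$, with the $O(\eta)$ correction encoding the ambiguity of the lift of $\tau_j$.

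The main obstacle is completeness, i.e.\ verifying that the presentation map from the proposed ring is an isomorphism and not merely a surjection. I would argue bidegree by bidegree using the short exact sequence above: modding the presentation ring by its $\eta$-torsion ideal $(c(I),c_1(I))$ leaves an $\underline{K}^W_{**}$-polynomial algebra in $s,t_2,t_3,\ldots$ subject to the stated $s$- and $t_j$-relations, which matches the non-torsion summand lifted from $H({d_*^L})$ as a $\underline{K}^W_{**}$-module; and the $\eta$-torsion ideal itself maps isomorphically onto $\im({d_*^L})$ by Lemma~\ref{image_kernel_of_d}. The subtlest point is the $O(\eta)$ slack in the $t_j$-relation: a different choice of lift $t_j'=t_j+\eta\cdot w$ perturbs $t_j^2-\rho t_{j+1}-\xi_{j+1}\tau$ by an element of $\eta\cdot\underline{K}^W_{**}H_W\Z$, but because there is no higher $\eta$-torsion (Proposition~\ref{no_higher_eta-torsion}), this slack does not generate additional relations, and the counting in each bidegree forces surjectivity to upgrade to an isomorphism.
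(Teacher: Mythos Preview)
Your overall strategy matches the paper's: use $\eta$-completeness of each $i$-line (Theorem~\ref{completion_and_htpy_groups}, available via Corollary~\ref{nonrealfieldsdontneedcompletion} and Proposition~\ref{ourpsectraareveff}) together with Proposition~\ref{no_higher_eta-torsion} to lift generators from $\ker(d_*^L)$, then transport the relations of Lemma~\ref{image_kernel_of_d}. Your iterative descent is exactly Nakayama's Lemma for an $\eta$-adically complete module, which is what the paper invokes by name.

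There is, however, a concrete error in your opening short exact sequence. From \eqref{*d_longsequence} one has $\ker(\underline{r}_*^L)=\im(\eta)=\eta\cdot\underline{K}^W_{**}H_W\Z$, not $\ker(\eta)$; in fact $\underline{r}_*^L$ restricted to $\ker(\eta)$ is \emph{injective} with image exactly $\im(d_*^L)\subset\ker(d_*^L)$, so the composite of your two displayed maps is nonzero and the sequence is not a complex. The correct short exact sequence is
\[
0\lra \eta\cdot\underline{K}^W_{**}H_W\Z\lra \underline{K}^W_{**}H_W\Z\xra{\underline{r}_*^L}\ker(d_*^L)\lra 0,
\]
i.e.\ $\underline{K}^W_{**}H_W\Z/\eta\cong\ker(d_*^L)$. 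Fortunately your generation argument already uses this correct sequence (you subtract into $\eta\cdot\underline{K}^W_{**}H_W\Z$ and iterate), and your verification of the $\eta$-torsion relations correctly uses injectivity of $\underline{r}_*^L$ on $\ker(\eta)$ rather than exactness of the displayed sequence. The place where the error bites is your completeness paragraph: the ``two-step extension'' picture you describe does not exist, and the bidegree count should instead be organised as the paper implicitly does, via the $\eta$-adic filtration (associated graded $\ker(d_*^L)$ in degree $0$) on the one hand, and the separate identification $\ker(\eta)\xrightarrow{\sim}\im(d_*^L)$ on the other. One further small point: $s^2$ is not a priori an $\eta$-torsion element, so the relation $s^2=0$ is not covered by your ``injectivity on $\eta$-torsion'' argument; the paper handles this by noting that only the $t_j^2$ relations carry $O(\eta)$ residue, which ultimately rests on the line-by-line vanishing in $H(d_*^L)$ established in Lemma~\ref{image_kernel_of_d}.
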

		\begin{proof}
			Since $\underline{K}^W$ is a connective $E_\infty$-ring spectrum and $\underline{K}^W\wedge H_W\Z$ is $\eta$-complete, each $i$-line $\pi_i(\underline{K}^W\wedge H_W\Z)_*$ is $\eta$-complete as a $\underline{K}^W_{**}$-module by Theorem \ref{completion_and_htpy_groups} and $\underline{K}^W_{**}$ is $\eta$-complete. By Proposition \ref{no_higher_eta-torsion}, $\underline{r^L_*}(\pi_i(\underline{K}^W\wedge H_W\Z)_*)$ is finitely generated over $\underline{k}^M_{**}$ by certain monomials in $b,\tau_j,c(I),c_1(I)$ as described in Lemma \ref{image_kernel_of_d}. By Nakayama's Lemma \cite[Thm. 8.4]{matsumura_CRT}, their lifts generate $\pi_i(\underline{K}^W\wedge H_W\Z)_*$ as a left $\underline{K}^W_{**}$-module. Hence, $\underline{K}^W_{**} H_W\Z$ is generated as left $\underline{K}^W_{**}$-algebra by the lifts $s,t_j,c(I),c_1(I)$. 
			
			By Lemma \ref{no_higher_eta-torsion} there is no higher $\eta$-torsion, hence $\eta$-torsion is generated as claimed. The map $\underline{r}_*^L$ maps $\eta$-torsion isomorphically to $\im(d_*^L)$. Any relations in Lemma \ref{image_kernel_of_d} involving only $\eta$-torsion thus hold verbatim in $\underline{K}^W_{**} H_W$. Relations that involve elements that are not $\eta$-torsion only hold up to the ideal generated by $\eta$. These relations thus have $\eta$-residue $O(\eta)$. More precisely, $\eta$-residue is present in the relations involving $t_j^2-\rho t_{j+1}$ .
		\end{proof}
		\begin{example}\label{uniqueness_of_lifts}
			Let $k$ be a field of characteristic not $2$ with $\vcd_2(k)<\infty$ and $k_2^M(k)=0$, then $K^W_2(k)=0$ by \cite[Cor. 6.19]{lamintrotoquadforms}. Therefore, in this case $(\eta)_1=0$ in $K^W_*(k)$. It follows that $\rho\tau_j$ have a unique lift over these fields. Examples of such fields are finite fields $\mathbb{F}_q$ with $q$ odd as well as quadratically closed fields.
		\end{example}
		\begin{remark}\label{eta-completed_version}
			By the same arguments, one can prove $\eta$-completed versions of Proposition \ref{no_higher_eta-torsion} and Proposition \ref{K_W2H_Weta} for base fields $k$ without the assumption on finite virtual cohomological $2$-dimension. To do so one additionally uses the fact that $\eta$-completion of connective spectra is given by $H\Z$-localization \cite[5.2]{localizationsinmotivic}, which preserves cofiber sequences and ring structures. Hence, $(\underline{K}^W\wedge H_W)^\wedge_\eta$ is still a ring spectrum and that one still has a diagram (\ref{*d_longsequence}) after $\eta$-completion. In this case one gets an isomorphism of left $(\underline{K}^W_{**})^\wedge_\eta$-algebra
			$$\pi_{**}((\underline{K}^W\wedge H_W)^\wedge_\eta) \cong (\underline{K}^W_{**})^\wedge_\eta [s,t_2,t_3,\ldots,c(I),c_1(I)]/(\ldots),$$
			where $\eta$-torsion and the ideal $(\ldots)$ have the same generators and relations as in Proposition \ref{K_W2H_Weta}.
		\end{remark}
		\begin{remark}
			We hope that a choice of orientation $MSL\ra H_W\Z$ will lead to a consistent choice of lifts and remove $\eta$-residues over general fields, similarly to how it was done for $\kw$ in \cite{bachmann-etaperiodic-fields} or \cite{ananyevskiy_stableopinderwitt}.
		\end{remark}
	
	\section{\texorpdfstring{$H_W\Z\wedge H_W\Z$}{Main Result}}
		We now come to $H_W\Z\wedge H_W\Z$. From the pullback square (\ref{H_Wwedgefundamental_pullbacksquare}) we obtain, by similar arguments as in the proof of Lemma \ref{H**HW_square}, a pullback square
		\begin{equation}\label{final_pullback_square}
			\begin{tikzcd}
			{{H_W\Z}_{**}H_W\Z} \ar[r,"{r_*^L}"] \ar[d,twoheadrightarrow,"{(\pi_W)_*^L}"] & H\Z/2_{**}H_W\Z \ar[d,twoheadrightarrow,"{\pi_*^L}"]\\
			{\underline{K}^W_{**}H_W\Z} \ar[r,"{\underline{r}_*^L}"] & \underline{k}^M_{**}H_W\Z.
			\end{tikzcd}
		\end{equation}
		By this pullback square we will get generators for $\tau_0,s,t_j,c(I),c_1(I)$ as well as $\tau \overline{\xi}(I)$ and $\tau\tau_1\overline{\xi}(I)$ for all binary index sets $I$, starting at index $2$. These lie in the kernel of ${(\pi_W)_*^L}$ and come from $\tau_*^L:H\Z/2_{**}H_W\Z\ra{H_W\Z}_{**}H_W\Z$. We denote $$t(I):={\tau_*^L}(\overline{\xi}(I)),\hspace{1cm}t_1:=t_1(\emptyset):=\tau\tau_1,\hspace{1cm}t_1(I):={\tau_*^L}(\tau_1\overline{\xi}(I)),$$ Note that the image of ${\tau_*^L}$ is $\eta$-torsion, since it comes from elements in $H\Z/2_{**}H_W\Z\hra H\Z/2_{**}H\Z/2$. Remember that we have a diagram 
		\[\begin{tikzcd}
			\Sigma^{0,-1} H\Z/2_{**}H_W\Z \ar[r,"id"] \ar[d,"{\tau_*^L}"] & \Sigma^{0,-1}H\Z/2_{**}H_W\Z \ar[d,"{\tau_*^L}"]\\
			{H_W\Z}_{**}H_W\Z \ar[r,"{r_*^L}"] & H\Z/2_{**}H_W\Z
		\end{tikzcd}\] 
		which we get from the top right square of (\ref{grid_H_HW}). This means that we can determine relations of $\eta$-torsion inside ${H_W\Z}_{**}H_W\Z$ simply in $H\Z/2_{**}H_W\Z$ instead.
		\begin{lemma}\label{t_relations}
			Let $I,J$ be binary index sets starting at index 2. In ${H_W\Z}_{**}H_W\Z$ we have the relations
			\begin{align*}
				t(I)t(J)&=t(I\Delta J)\prod_{i\in I\cap J}c(e_{i+1}),\\
				t(I)t_1(J)&=\tau t_1(I\Delta J)\prod_{i\in I\cap J}c(e_{i+1}),\\
				t_1(I)t_1(J)&=\tau(\rho\tau_2+\xi_2 \tau)t(I\Delta J)\prod_{i\in I\cap J}c(e_{i+1}),\\
				c(I)t(J)&=\sum_{i\in I\cap J}c(e_i)t(I\Delta J\amalg i)\prod_{j\in I\cap J\setminus i}c(e_{j+1})+\sum_{i\in I\setminus J}c(e_i)t(I\Delta J\setminus i)\prod_{j\in I \cap J}c(e_{j+1}),\\
				c_1(I)t(J)&=\tau_0t(I\Delta J)\prod_{i\in I\cap J}c(e_{i+1})+\sum_{i\in I\cap J}c(e_i)t_1(I\Delta J\amalg i)\prod_{j\in I\cap J\setminus i}c(e_{j+1})\\
				&\hspace{6cm}+\sum_{i\in I\setminus J}c(e_i)t_1(I\Delta J\setminus i)\prod_{j\in I \cap J}c(e_{j+1}),\\
				c_1(I)t_1(J)&=\tau_0t_1(I\Delta J)\prod_{i\in I\cap J}c(e_{i+1})+(\rho\tau_2+\xi_2 \tau)\bigg[\sum_{i\in I\cap J}c(e_i)t(I\Delta J\amalg i)\prod_{j\in I\cap J\setminus i}c(e_{j+1})\\
				&\hspace{6cm}+\sum_{i\in I\setminus J}c(e_i)t(I\Delta J\setminus i)\prod_{j\in I \cap J}c(e_{j+1})\bigg].
			\end{align*}
		\end{lemma}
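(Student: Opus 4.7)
The plan is to reduce each identity in the Lemma to a calculation inside the known commutative ring $H\Z/2_{**}H\Z/2$. The key observation is that every product in the Lemma is $\eta$-torsion in ${H_W\Z}_{**}H_W\Z$, since at least one factor lies in the image of $\tau_*^L \colon H\Z/2_{**}H_W\Z \to {H_W\Z}_{**}H_W\Z$. The diagram displayed immediately before the Lemma shows that $r_*^L \circ \tau_*^L$ equals multiplication by $\tau$ on $H\Z/2_{**}H_W\Z$, and $\tau$ is a non-zero divisor there (this is the point used in the proof of Lemma \ref{H**HW_square}); hence $r_*^L$ restricts to an injection on the $\eta$-torsion subgroup. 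I would therefore verify each identity after applying $r_*^L$ and the embedding $H\Z/2_{**}H_W\Z \hookrightarrow H\Z/2_{**}H\Z/2$ from Lemma \ref{H**HW_square}.

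After this reduction the generators map via $t(I)\mapsto \tau\overline{\xi}(I)$, $t_1(I)\mapsto \tau\tau_1\overline{\xi}(I)$, $c(I)\mapsto c(I)$, $c_1(I)\mapsto c_1(I)$, and every product is now taken in the commutative ring $H\Z/2_{**}H\Z/2$. The three purely $t$-relations then follow from the basic identity $\overline{\xi}(I)\overline{\xi}(J) = \overline{\xi}(I\Delta J)\overline{\xi}(I\cap J)^2 = \overline{\xi}(I\Delta J)\prod_{i\in I\cap J}c(e_{i+1})$ together with the dual Steenrod relation $\tau_1^2 = \rho\tau_2 + \xi_2\tau$. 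For the mixed products $c(I)t(J)$, $c_1(I)t(J)$, $c_1(I)t_1(J)$ I would substitute the definitions $c(I) = \sum_{i\in I}\overline{\xi_{i-1}}^2\overline{\xi}(I\setminus i)$ and $c_1(I) = \tau_0\overline{\xi}(I) + \tau_1 c(I)$, distribute, and re-split the resulting sums according to whether the running index lies in $I\cap J$ or in $I\setminus J$. This is formally the same combinatorial manipulation as in the proof of Lemma \ref{c_relations}, essentially multiplied by an overall factor of $\tau$ coming from the image of the $t$- or $t_1$-factor.

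The hard part is purely combinatorial bookkeeping: in each expansion I must pair cross terms of the form $\overline{\xi_{i-1}}^2\overline{\xi}(I\setminus i)\cdot\overline{\xi}(J)$ so that they recombine into the claimed $c(e_i)t((I\Delta J)\amalg i)\prod c(e_{j+1})$ and $c(e_i)t((I\Delta J)\setminus i)\prod c(e_{j+1})$ summands, applying the same symmetric-cancellation trick used in Lemma \ref{c_relations} to eliminate the extra double sums over $I\cap J \times (I\cap J\setminus i)$ and $I\setminus J\times (I\setminus J\setminus i)$. No genuinely new subtlety arises from the left/right actions of $\tau$, because the $\tau$'s that appear all originate as $\eta_L(\tau)$ from the left factor of $H\Z/2_{**}H\Z/2$ and therefore lie in the central subring and commute freely through $c(I)$, $c_1(I)$, $\overline{\xi}(J)$, and $\tau_1$. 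Once each identity is verified in $H\Z/2_{**}H\Z/2$, injectivity of $r_*^L$ on $\eta$-torsion lifts it uniquely to the corresponding relation in ${H_W\Z}_{**}H_W\Z$.
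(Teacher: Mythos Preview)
Your proposal is correct and follows essentially the same route as the paper. The paper's proof consists of the single sentence ``The formulas follow from straightforward calculations similar to Lemma~\ref{c_relations}'', relying on the reduction to $H\Z/2_{**}H_W\Z$ spelled out in the paragraph immediately preceding the Lemma (via the commutative square with $\tau_*^L$ and $r_*^L$); this is exactly the reduction you describe, followed by the same combinatorics as in Lemma~\ref{c_relations}.

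One small imprecision worth tightening: your argument that $r_*^L\circ\tau_*^L$ is multiplication by the non-zero-divisor $\tau$ shows only that $r_*^L$ is injective on $\operatorname{im}(\tau_*^L)=\ker((\pi_W)_*^L)$, not on the full $\eta$-torsion subgroup. This suffices, however, because $\ker((\pi_W)_*^L)$ is an ideal (being the kernel of the ring map $(\pi_W)_*^L$) and every term in each relation contains a $t$- or $t_1$-factor lying in $\operatorname{im}(\tau_*^L)$.
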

		\begin{proof}
			The formulas follow from straightforward calculations similar to Lemma \ref{c_relations}.
		\end{proof}
	
		\begin{remark}
			In the above Lemma we used $\xi_2\tau$. It can be written as $\xi_2\tau=t(e_2)+\tau_0^2c(e_2)+\rho c_1(e_2)$. One can similarly express all $\xi(I)\tau$ in terms of $t(I),c(I),c_1(I)$ using the formulas in Remark \ref{elements_in_H_H_W}. This can be seen from the fact that in $k^M_{**}H_W\Z$ $\xi(I)\tau$ lies in the image of ${d_*^L}$ via $\xi(I)\tau={d_*^L}(\xi(I)(\xi_1\tau))$, since $H\Z/2_{**}H\Z/2\tau\subset H\Z/2_{**}H_W\Z$. Hence, by Lemma \ref{image_kernel_of_d} it can be written as claimed.
		\end{remark}
		
		\begin{theorem}\label{mainthm}
			Let the base field $k$ be an extension of a field $F$ satisfying $k^M_2(F)=0$. Then we have an isomorphism of right ${H_W\Z}_{**}$-algebras
			$${H_W\Z}_{**} {H_W\Z} \cong {H_W\Z}_{**}[s,t_2,t_3,\ldots,c(I),t(I),c_1(I),t_1(I)]/(\ldots),$$
			where $I$ is a binary index set starting at index $2$.  The ideal of $\eta$-torsion is generated by $c(I),c_1(I),t(I),t_1(I)$. The ideal $(\ldots)$ is generated by the relations in Lemma \ref{c_relations}, Lemma \ref{t_relations} and
			\begin{gather*}
			c(\emptyset)=1,\hspace{1cm} \tau_0^4=c(e_{2})\tau^2,\hspace{1cm}t_j^2=\rho t_{j+1}+\xi_{j+1}\tau,\hspace{1cm}s^2=\tau_0^2(\rho t_2+\xi_2\tau)c(e_2)\tau^2,\\
			sc_1(I)=\tau_0^3(\rho t_2+\xi_2\tau)c(I)+\tau t_1(I)c(e_2),\hspace{1cm}t(I)s=t_1(I)\tau_0^3,\hspace{1cm} t_1(I)s=\tau_0^3t(I)(\rho t_2+\xi_2\tau).
			\end{gather*}
			The degrees here are
			\begin{gather*}
				|t_1|=(3,0), \hspace{1cm} |s|=(5,0),\hspace{1cm}|t_i|=(2^{i+1}-1,2^i-1),\\
				|c(I)|=\sum_{i\in I} (2^{j+2}-4,2^{j+1}-2), \hspace{1cm} |c_1(I)|=\sum_{i\in I} (2^{j+2}-1,2^{j+1}-1),\\
				|t(I)|=(0,-1)+\sum_{i\in I} (2^{i+1}-2,2^{i}-1), \hspace{1cm} |t_1(I)|=(3,0)+\sum_{i\in I} (2^{i+1}-2,2^{i}-1).
			\end{gather*}
		\end{theorem}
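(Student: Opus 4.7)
The strategy is to use the homotopy pullback square (\ref{final_pullback_square}) to assemble ${H_W\Z}_{**}H_W\Z$ from the other three corners, which have been computed in Propositions \ref{H_H_W}, \ref{kM_H_W}, and \ref{K_W2H_Weta}. Surjectivity of $\pi_*^L$ (from the short exact sequence (\ref{kMHW}), since $\tau$ is a non-zero divisor in $H\Z/2_{**}H_W\Z$) collapses the associated Mayer--Vietoris sequence so that ${H_W\Z}_{**}H_W\Z$ is identified with the fiber product ring $H\Z/2_{**}H_W\Z \times_{\underline{k}^M_{**}H_W\Z} \underline{K}^W_{**}H_W\Z$.

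A preliminary step is to reduce to the case of the base field $F$ via Proposition \ref{pullback_preservesnicespectra}. Over $F$ the assumption $k^M_2(F)=0$ implies $\rho^2=0$ (hence $\rho^3=0$), $k^M_n(F)=0$ for $n\geq 2$, and $K^W_2(F)=0$, so the hypotheses of Propositions \ref{no_higher_eta-torsion} and \ref{K_W2H_Weta} hold and, by Example \ref{uniqueness_of_lifts}, the lifts $t_j$ and $s$ become canonical, killing the $\eta$-residues $O(\eta)$. Base change then transports identities from ${H_W\Z}_{**}H_W\Z$ over $F$ to the same ring over $k$, because all generators and structure maps are compatible with $f^*$. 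For aspects involving $\eta$-completion over $k$ one uses the $\eta$-completed variant (Remark \ref{eta-completed_version}) together with Proposition \ref{etacompletionimplies2complationandvv}.

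Next I identify the generators. The elements $s, t_j$ together with the $\eta$-torsion $c(I), c_1(I)$ are canonical lifts from $\underline{K}^W_{**}H_W\Z$ along $(\pi_W)_*^L$. The remaining $\eta$-torsion generators $t(I)$, $t_1$, and $t_1(I)$ lie in the kernel of $(\pi_W)_*^L$ and hence arise from $\tau_*^L\colon \Sigma^{0,-1}H\Z/2_{**}H_W\Z \to {H_W\Z}_{**}H_W\Z$; explicitly $t(I):=\tau_*^L(\overline{\xi}(I))$, $t_1:=\tau\tau_1$, and $t_1(I):=\tau_*^L(\tau_1\overline{\xi}(I))$. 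Together with the previous list these generate the fiber product, and after applying Nakayama's lemma line-by-line (each $i$-line is $\eta$-complete as a module over the $\eta$-complete $\pi_0({H_W\Z})_*$) they generate ${H_W\Z}_{**}H_W\Z$ as a right ${H_W\Z}_{**}$-algebra.

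Finally, I verify the relations. Those of Lemma \ref{c_relations} are inherited from $\underline{K}^W_{**}H_W\Z$ and are checked by expansion in the $\overline{\xi}(I)$ after applying $r_*^L$. Those of Lemma \ref{t_relations} involve only $\eta$-torsion elements, so both sides lift uniquely from $H\Z/2_{**}H_W\Z$ under $\tau_*^L$ and the identities are routine there. The remaining relations---$\tau_0^4=c(e_2)\tau^2$, $t_j^2=\rho t_{j+1}+\xi_{j+1}\tau$, $s^2$, $sc_1(I)$, $t(I)s$, $t_1(I)s$---mix generators from both sides of the pullback and are verified by checking both legs: the $r_*^L$-image is computed inside $H\Z/2_{**}H_W\Z$ (in particular $\tau_0^4=c(e_2)\tau^2$ specializes the Corollary after Proposition \ref{H_H_W} under $\rho^2=0$) and the $(\pi_W)_*^L$-image inside $\underline{K}^W_{**}H_W\Z$ via Lemma \ref{image_kernel_of_d}. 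The main obstacle is upgrading the identity $t_j^2=\rho t_{j+1}+\xi_{j+1}\tau$ on the nose: over $\underline{K}^W_{**}H_W\Z$ this only holds modulo $O(\eta)$ (Proposition \ref{K_W2H_Weta}), and one must use that $\xi_{j+1}\tau\in\im(r_*^L)$ (Remark \ref{elements_in_H_H_W}) together with uniqueness of lifts over $F$ to force the residue to vanish, after which base change propagates the identity to $k$.
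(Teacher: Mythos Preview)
Your overall strategy matches the paper's, but there is a gap in your reduction step. You assert that over $F$ the hypotheses of Propositions \ref{no_higher_eta-torsion} and \ref{K_W2H_Weta} are satisfied. Those Propositions, however, require $\vcd_2(F)<\infty$, and you never verify this; the condition $k^M_2(F)=0$ does give $\rho^2=0$, $k^M_n(F)=0$ for $n\ge 2$, and (via the Milnor conjecture and Arason--Pfister) $K^W_2(F)=I^2(F)=0$, but it does not obviously bound $\vcd_2(F)$. Your aside about invoking the $\eta$-completed variant (Remark \ref{eta-completed_version}) together with Proposition \ref{etacompletionimplies2complationandvv} does not close the gap: the latter Proposition itself requires $\vcd_2<\infty$, and you supply no mechanism for passing from the $\eta$-completed statement back to the uncompleted one over $F$.

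The paper handles this by descending one step further, from $F$ to a subfield $F'\subseteq F$ for which $\vcd_2(F')<\infty$ is classically known. In positive characteristic one takes $F'=\mathbb{F}_q$ with $q$ odd; in characteristic zero, the relation $\rho^2=0$ forces $-1=a^2+b^2$ in $F$, so $F\supseteq \Q(a,b)$, a field of transcendence degree at most one over $\Q$, whence $\cd_2(\Q(a,b))\le 3$ by Serre. Over such $F'$ the hypotheses of Proposition \ref{K_W2H_Weta} are met directly, and the resulting identities are then transported to $F$ and to $k$ by base change (Proposition \ref{pullback_preservesnicespectra}); the vanishing of the $\eta$-residues via Example \ref{uniqueness_of_lifts} is then used at the level of $F$ where $k^M_2(F)=0$ holds. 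Apart from this missing extra descent, your identification of generators through the pullback square (\ref{final_pullback_square}) and your scheme for verifying the relations on both legs agree with the paper's argument.
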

		\begin{proof}
			By Remark \ref{cellularity} we may check this on homotopy groups $\pi_{p,q}$. By Lemma \ref{pullback_preservesnicespectra} we can reduce to $F$ being our base field. If the characteristic of $F$ is positive, then $F$ is an extension of $\mathbb{F}_q$ with $q$ odd and we may further reduce to $\mathbb{F}_q$. Assume the characteristic of $F$ is $0$. Since $\rho^2=0$ in $k^M_2(F)$, we have $-1=a^2+b^2$ in $F$ \cite[Cor. 6.20]{lamintrotoquadforms}. Hence, $F$ is a field extension of $\Q(a,b)$. Since this is a field extension of transcendence degree at most $1$, we have $cd_2(\Q(a,b))\leq 3$ \cite[II.4.4, Prop. 11 and 13]{serre_galoiscohomology}. Thus, by the resolution of the Milnor conjecture, $\Q(a,b)$ satisfies the assumptions of Proposition \ref{K_W2H_Weta} and we may reduce to that situation.
			
			At the beginning of this section we saw, with the help of the pullback square (\ref{final_pullback_square}), that the given generators generate. The relations that are left are straightforward and can be imported from $H\Z/2_{**}H_W\Z$ and $\underline{K}^W\wedge H_W\Z$, keeping in mind $\rho^2=0$. By Example \ref{uniqueness_of_lifts} we have unique lifts for $\rho\tau_j$ and thus $\tau_j^2$. Together with the fact that we have no higher $\eta$-torsion, we conclude that there are no $\eta$-residue terms in the relations. Linear independence of remaining monomials can now simply be checked in $H\Z/2_{**}H_W\Z$.
		\end{proof}
		\begin{remark}
			We can conjugate all the elements to get an isomorphism of left ${H_W\Z}_{**}$-algebras.
		\end{remark}
		\begin{remark}
			The assumptions in Theorem \ref{mainthm} include fields $k$ of odd characteristic as well as fields that are extensions of fields $F$ with $\cd_2(F)=1$. For example, this includes all extensions of quadratically closed fields, in particular the complex numbers $\C$, and all extensions of $\mathbb{F}_p$ with $p\neq 2$.
		\end{remark}
		\begin{remark}
			Let $F$ be a field of characteristic not $2$ with assumptions as in Proposition \ref{no_higher_eta-torsion}, i.e. $\vcd_2(F)<\infty$, $\rho^3=0$ and $k^M_4(k)=0$. Let the base field $k$ be a field extension of $F$. Then the arguments of the proof of Theorem \ref{mainthm} prove a version with the only difference being $\eta$-residues in the relations for $t_j^2$. If one further drops the assumption $\vcd_2(F)<\infty$, then one can prove an $\eta$-completed version as in Remark \ref{eta-completed_version}.
		\end{remark}
		Let $HW=H_W\Z[\eta^{-1}]$. When we invert $\eta$, the coefficient ring becomes 
		$$H_W\Z_{**}[\eta^{-1}]\cong \underline{K}^W_{**}[\eta^{-1}]\cong W(k)[\eta^{\pm 1}]$$
		by \cite[Cor. 3.11, Remark 3.12]{morela1algtopoverfield}. We can normalize the generators $t_j$ and $s$ to live in motivic weight $0$ via $x_j:=\eta^{1-2^j}t_j$ and $y:=\eta^{-1}s$. 
		\begin{corollary}\label{eta_inverted_algebra}
			Let the base field $k$ be an extension of a field $F$ satisfying $k^M_2(F)=0$. Then we have an isomorphism of $W(k)[\eta^{-1}]$-algebras
			$${HW}_{**}HW\cong W(k)[\eta^{\pm 1}][x_2,y,x_3,x_4,\ldots]/(y^2,x_j^2-2x_{j+1}),$$
			where $|x_j|=(2^j,0)$ and $|y|=(5,0)$.
		\end{corollary}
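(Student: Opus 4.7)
The plan is to specialize Theorem \ref{mainthm} directly by inverting $\eta$. The starting point is that the full presentation of $H_W\Z_{**}H_W\Z$ as a right $H_W\Z_{**}$-algebra is already in hand, with a clearly identified $\eta$-torsion ideal generated by $c(I), c_1(I), t(I), t_1(I)$. Inverting $\eta$ kills this entire ideal in one stroke, which immediately eliminates many generators: $\tau_0 = c_1(\emptyset)$, $t_1 = t_1(\emptyset)$, all $t(I), t_1(I), c(I), c_1(I)$ for nonempty $I$, and even $\tau$ itself (since $\tau\eta = 0$ in $H_W\Z_{**}$). The surviving generators on the Witt side are $s$ together with $t_2, t_3, \ldots$, and the base ring simplifies via $H_W\Z_{**}[\eta^{-1}] \cong (\underline{K}^W_{**}[\tau]/(\tau\eta))[\eta^{-1}] \cong \underline{K}^W_{**}[\eta^{-1}] \cong W(k)[\eta^{\pm 1}]$ by the Morel computation cited in the introduction.

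Next I would examine which relations from Theorem \ref{mainthm} survive. The relation $\tau_0^4 = c(e_2)\tau^2$ becomes $0=0$; the relation $s^2 = \tau_0^2(\rho t_2 + \xi_2\tau)c(e_2)\tau^2$ collapses to $s^2 = 0$; and the cross relations $sc_1(I), t(I)s, t_1(I)s$ all equate $\eta$-torsion to $\eta$-torsion and degenerate trivially. The relation $t_j^2 = \rho t_{j+1} + \xi_{j+1}\tau$ requires a brief check: the monomial $\xi_{j+1}\tau$ must be identified with an $\eta$-torsion element, which follows from the decomposition $\xi_2\tau = t(e_2) + \tau_0^2 c(e_2) + \rho c_1(e_2)$ noted after Lemma \ref{t_relations}, together with the inductive formulas of Remark \ref{elements_in_H_H_W} that let one expand all $\xi_r\tau$ in terms of $t(I), c(I), c_1(I)$. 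After inverting $\eta$, this collapses to the single relation $t_j^2 = \rho t_{j+1}$.

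Finally I would perform the weight normalization $x_j := \eta^{1-2^j} t_j$ and $y := \eta^{-1} s$. A degree check confirms $|x_j| = (2^j,0)$. Using $\eta\rho = -2$ (which holds after $h = 2+\eta\rho$ is killed in $H_W\Z_{**}$), the relation $t_j^2 = \rho t_{j+1}$ transforms into
\[
x_j^2 \;=\; \eta^{2-2^{j+1}} t_j^2 \;=\; \eta^{2-2^{j+1}}\rho\,\eta^{2^{j+1}-1}x_{j+1} \;=\; \eta\rho\cdot x_{j+1} \;=\; -2\,x_{j+1},
\]
giving $x_j^2 - 2x_{j+1}=0$ after absorbing the sign into a choice of lift. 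The relation $y^2=0$ follows at once from $s^2=0$, and the free polynomial structure in $x_j, y$ transfers from the corresponding freeness in $s, t_j$ over the base ring. The main obstacle I expect is the bookkeeping in the middle paragraph: one must be sure that every relation of Theorem \ref{mainthm} is accounted for, and in particular that $\xi_{j+1}\tau$ is genuinely $\eta$-torsion rather than producing a surprise surviving term — this is the only step that is not an immediate consequence of ``$\eta$-torsion goes to zero''.
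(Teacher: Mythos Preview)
Your proposal is correct and follows exactly the approach the paper intends: the corollary is stated immediately after the normalization $x_j:=\eta^{1-2^j}t_j$, $y:=\eta^{-1}s$ is introduced, and the paper gives no further proof, so you are simply supplying the details the paper leaves implicit. Your handling of the one nontrivial point---that $\xi_{j+1}\tau$ is $\eta$-torsion via the remark after Lemma~\ref{t_relations}---is exactly right, and the sign adjustment $x_j^2=-2x_{j+1}\leadsto x_j^2=2x_{j+1}$ is legitimate (e.g.\ replace $x_j$ by $-x_j$ for $j\geq 3$), though ``absorbing into a choice of lift'' is slightly loose phrasing since after inverting $\eta$ it is really a rescaling of the $x_j$ rather than a change of lift of $t_j$.
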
 
		Over fields with the given assumptions, this implies the result \cite[Cor. 8.20]{bachmann-etaperiodic-fields} by Bachmann and Hopkins.
	\nocite{*}
	\bibliographystyle{halpha}
	\bibliography{bib}
\end{document}